\newcolumntype{P}[1]{>{\centering\arraybackslash}p{#1}}
\theoremstyle{definition}
\newtheorem{lemma}{Lemma}
\newtheorem{example}{Example}
\newtheorem{theorem}{Theorem}
\newtheorem{corollary}{Corollary}
\newtheorem{conjecture}{Conjecture}
\title{Row-column factorial designs with strength at least $2$}
\author{Fahim Rahim and Nicholas J. Cavenagh}
\affil{Department of Mathematics and Statistics, \\
University of Waikato, Private Bag 3105, \\
Waikato Mail Centre 3240, New Zealand}
\begin{document}
	
\maketitle

\begin{abstract}
The $q^k$ (full) factorial design with replication $\lambda$ is the multi-set consisting of $\lambda$ occurrences of each element of each $q$-ary vector of length $k$; we denote this by $\lambda\times [q]^k$. 
An $m\times n$ {\em row-column factorial design} $q^k$ of {\em strength} $t$ is an arrangement of the elements of $\lambda \times [q]^k$ into an $m\times n$ array (which we say is of type $I_k(m,n,q,t)$)   
such that for each row (column), the set of vectors therein are the rows of an orthogonal array of degree $k$, size $n$ (respectively, $m$), $q$ levels and strength $t$. 
Such arrays are used in experimental design. In this context, for a row-column factorial design of strength $t$, all subsets of interactions of size at most $t$ can be estimated without confounding by the row and column blocking factors. 

In this manuscript we study row-column factorial designs with strength $t\geq 2$.  
Our results for strength $t=2$ are as follows. 
For any prime power $q$ and assuming $2\leq M\leq N$, we show that 
 there exists an array of type $I_k(q^M,q^N,q,2)$ if and only if $k\leq M+N$, 
$k\leq (q^M-1)/(q-1)$ and $(k,M,q)\neq (3,2,2)$. 
We find necessary and sufficient conditions for the existence of $I_{k}(4m,n,2,2)$ for small parameters. We 
also show that  $I_{k+\alpha}(2^{\alpha}b,2^k,2,2)$ exists whenever
$\alpha\geq 2$ and $2^{\alpha}+\alpha+1\leq k<2^{\alpha}b-\alpha$, assuming there exists a Hadamard matrix of order $4b$. 

For $t=3$ we focus on the binary case. Assuming $M\leq N$, there exists an array of type $I_k(2^M,2^N,2,3)$ if and only if $M\geq 5$,  $k\leq M+N$ and $k\leq 2^{M-1}$.
 Most of our constructions use linear algebra, often  in application to existing orthogonal arrays and Hadamard matrices. 
\end{abstract}

Mathematics Subject Classification: 05B15, 05B20, 15B34.
	
{\bf Keywords}: Row-column factorial design, Hadamard matrix, orthogonal array, Hadamard code, linear code, linear orthogonal array. 	
	
\section{Introduction}

For any integer $s$,  let $[s]=\{0,1,\dots ,s-1\}$. 
An {\em orthogonal array} of size $N$, degree $k$, $q$ levels and strength $t$, denoted OA$(N,k,q,t)$ is an $N\times k$ array with entries from $[q]$ such that in every $N\times t$ submatrix, every $1\times t$ row vector appears $N/q^t$ times. 
The $q^k$ (full) factorial design with replication $\lambda$ is the multi-set consisting of $\lambda$ occurrences of each element of $[q]^k$; we denote this by $\lambda\times [q]^k$. 

An $m\times n$ {\em row-column factorial design} $q^k$ is any  arrangement of the elements of $\lambda \times [q]^k$ into an $m\times n$ array. 
We say that such an array has {\em strength} $t$ if for each row (column), the set of vectors therein are the rows of an orthogonal array of size $k$, degree $n$ (respectively, $m$), $q$ levels and strength $t$. 
That is, if we consider any subset of $t$ positions within the vectors in a fixed row (or column), we obtain 
 a $[q]^t$ full-factorial design with replication $n/q^t$ (respectively, $m/q^t$).
We denote such a row-column factorial design by $I_k(m,n,q,t)$, where the replication number or index $\lambda $ of the design is given by $ \lambda = mn/q^k$.

For example, in Table \ref{tabble1} the elements of $[3]^4$ are arranged into a $9\times 9$ array such that the vectors in each row and column are the rows of an OA$(9,4,3,2)$. Thus this is an array of type $I_4(9,9,3,2)$. 

 \begin{table}[H] 
		\begin{center}
			\renewcommand{\arraystretch}{1.2}
			\begin{tabular}{ccccccccc}
				0000&1011&2022&0112&1120&2101&0221&1202&2210 \\
				0111&1122&2100&0220&1201&2212&0002&1010&2021 \\
				0222&1200&2211&0001&1012&2020&0110&1121&2102 \\
				1021&2002&0010&1100&2111&0122&1212&2220&0201 \\
				1102&2110&0121&1211&2222&0200&1020&2001&0012 \\
				1210&2221&0202&1022&2000&0011&1101&2112&0120 \\
				2012&0020&1001&2121&0102&1110&2200&0211&1222 \\
				2120&0101&1112&2202&0210&1221&2011&0022&1000 \\
				2201&0212&1220&2010&0021&1002&2122&0100&1111 \\
	
			\end{tabular}
			\caption{A row-column factorial design $I_4(9,9,3,2)$.}
			\label{tabble1}
		\end{center}
	\end{table}

Within experimental design, a {\em row-column} design can refer to a variety of combinatorial designs, all with the property of being arranged in a rectangular array, where regularity conditions may be imposed in order to estimate effects without confounding. 
Table \ref{tabble1}, for example, could be used to study the effects of 4 drugs on cows, each at 3 dosage levels while controlling for the effects of 9 breeds (the rows) and 9 age groups (the columns). Here the vector $(2,0,2,2)$ in the first row and third column indicates that the first breed and third age group are given the highest dosage of the first, third and fourth drug and the lowest dosage of the second drug. The property of being an array of type $I_4(9,9,3,2)$ eliminates, for example, confounding between breed or age group and the interaction between any pair of drugs.  
We refer the reader to \cite{godolphin2019construction, rahim2021row-column} for a literature review on the application of row-column factorial designs to statistical experimental design. 

In this paper two arrays are {\em equivalent} under any: (a) reordering of rows; (b) reordering of columns; (c) reordering of levels (applied globally); and (d) reordering of the entries in each vector (with the same reordering applied globally). It is also convenient to use the terms  ``array'' and ``matrix'' interchangeably. When linear algebra is applied we often work over the field of order $q$, with an understanding that when $q$ is a prime power, the levels are relabelled with $[q]$ as a final step.   

By definition, if there exists an array of type $I_k(m,n,q,t)$ then 
$q^k\vert mn$ and  there exists an OA$(m,k,q,t)$ and there exists an OA$(n,k,q,t)$.
 If $(k,m,n,q,t)$ is a $5$-tuple satisfying these three necessary conditions we say
that $(k,m,n,q,t)$ is {\em admissible}. 

Necessary conditions for the existence of orthogonal arrays of strength $t$
 imply further necessary conditions for the existence of row-column factorial designs of strength $t$. It is impractical to list all known necessary conditions (in particular as $t$ grows large);  we refer the reader to 
surveys in III.6 and III.7 of the Handbook of Combinatorial Designs \cite{Colbourn:2006:HCD:1202540}.  
Elementary conditions imply that $t\leq k$ and $q^t$ divides both $m$ and $n$. 
In summary:
\begin{lemma}
If $(k,m,n,q,t)$ is admissible then $q^t|m$, $q^t|n$, $q^k|mn$ and $t\leq k$.
\label{triviality} 
\end{lemma}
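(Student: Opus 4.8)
The statement is really just a matter of unwinding the definition of admissibility, so I expect the proof to be short. Recall that $(k,m,n,q,t)$ being admissible means precisely that $q^k\mid mn$ and that both an OA$(m,k,q,t)$ and an OA$(n,k,q,t)$ exist. The condition $q^k\mid mn$ is therefore immediate, being one of the three defining clauses.

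For $q^t\mid m$, I would fix an OA$(m,k,q,t)$ and appeal directly to its defining property: selecting any $t$ of its $k$ columns yields an $m\times t$ submatrix in which each of the $q^t$ distinct $q$-ary vectors of length $t$ occurs exactly $m/q^t$ times. Since these $q^t$ vectors exhaust all $m$ rows of the submatrix, $m/q^t$ must be a positive integer, whence $q^t\mid m$. The identical argument applied to an OA$(n,k,q,t)$ gives $q^t\mid n$. For $t\le k$, I would observe that the strength-$t$ condition on an OA of degree $k$ refers to $t$-subsets of its $k$ columns, so it is only meaningful — and, under the standard convention for orthogonal arrays, only defined — when $t\le k$; hence the existence of OA$(m,k,q,t)$ forces $t\le k$.

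There is no real obstacle: the four conclusions are either one of the admissibility hypotheses outright ($q^k\mid mn$) or elementary consequences of the orthogonal-array structure. The only point deserving a word of care is the convention that an OA's strength does not exceed its degree — equivalently, that $m/q^t$ is read as a positive integer rather than a vacuously-satisfied count — which is what excludes $t>k$.
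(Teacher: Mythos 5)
Your proof is correct and matches the paper's intent exactly: the paper states this lemma without proof, noting only that the conditions are ``elementary'' consequences of the definition of admissibility and of orthogonal arrays, and your unwinding of the definitions ($q^k\mid mn$ being a defining clause, $m/q^t$ and $n/q^t$ being positive integers, and $t\le k$ from the meaning of strength) is precisely that elementary argument.
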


Necessary and sufficient conditions for a row-column factorial design of strength $1$ are given in \cite{rahim2021row-column}, generalizing \cite{godolphin2019construction} and \cite{wang2017orthogonal}.   
 
\begin{theorem} \label{thm:prevresult} (\cite{rahim2021row-column})
	Let $m\leq n$. 
		There exists $I_k(m,n,q,1)$ (that is, an $m\times n$ row-column factorial design $q^k$ of strength $1$) if and only if:  
			\begin{enumerate}
			\item[i.]  $q|m$ and $q|n$; 
			\item[ii.] if $k=q=m=2$ then $4$ divides $n$; and 
			\item[iii.] $(k,m,n,q)\neq (2,6,6,6)$.
		\end{enumerate}
	\end{theorem}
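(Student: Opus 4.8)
The plan is to establish necessity directly and to prove sufficiency by collapsing every admissible tuple, via two elementary inflation moves, onto a short list of \emph{core} designs built with linear algebra over a finite field, the Chinese remainder theorem, and mutually orthogonal Latin (or frequency) squares. For \emph{necessity}, that $q\mid m$, $q\mid n$ and $q^{k}\mid mn$ are forced follows from Lemma~\ref{triviality}, recalling that an OA$(m,k,q,1)$ exists for every $k$ as soon as $q\mid m$. For (iii): if $(k,m,n,q)=(2,6,6,6)$ then $\lambda=1$, so every vector of $[6]^{2}$ occurs once; reading off the two coordinates gives two $6\times6$ arrays, and the strength-$1$ row/column conditions with $\lambda=1$ force each to be a Latin square of order $6$ while the $36$ pairs $(a,b)$ are all distinct --- a pair of MOLS of order $6$, which does not exist. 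For (ii): if $k=q=m=2$, each column is an OA$(2,2,2,1)$ and hence equals $\{(0,0),(1,1)\}$ or $\{(0,1),(1,0)\}$; writing $a$ and $b=n-a$ for the numbers of columns of the two kinds, counting occurrences of the four symbol pairs forces $a=b=n/2$, while a second count of the zeros in a fixed row (split between the two column kinds) forces that count to be $n/4$, so $4\mid n$.

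\emph{Inflation and reduction.} Stacking $c$ copies of an $I_{k}(m,n,q,1)$ vertically produces $I_{k}(cm,n,q,1)$, and repeating its columns $c$ times produces $I_{k}(m,cn,q,1)$; in each case every row and column remains an OA of strength $1$ and the index scales correctly. Conversely, if $\lambda=mn/q^{k}>1$ then for a prime $d\mid\lambda$ one checks that $d$ may be cancelled from $m$ or from $n$ while staying admissible, so every admissible tuple reduces to a \emph{minimal} one with $\lambda=1$, i.e.\ $mn=q^{k}$. It therefore suffices to construct $I_{k}(m,n,q,1)$ in enough cases that inflation covers every non-exceptional admissible tuple: all minimal tuples except $(2,2,2,2)$ and $(2,6,6,6)$, together with a few small non-minimal tuples --- e.g.\ $I_{2}(2,4,2,1)$ and $I_{2}(4,6,2,1)$ --- whose minimal reduction is exceptional but which nonetheless must exist.

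\emph{Core constructions.} When $q=p^{e}$ is a prime power and $m=p^{a}$, $n=p^{b}$: identify $[q]$ with $\mathbb{F}_{p}^{e}$, index rows and columns by $\mathbb{F}_{p}^{a}$ and $\mathbb{F}_{p}^{b}$, and place $Tv+Sw\in(\mathbb{F}_{p}^{e})^{k}$ in cell $(v,w)$ for $\mathbb{F}_{p}$-linear maps $T,S$. The strength-$1$ conditions say each of the $k$ length-$e$ coordinate blocks of $T$ (and of $S$) is a surjection onto $\mathbb{F}_{p}^{e}$, and the index condition says $[\,T\mid S\,]$ is a surjection onto $(\mathbb{F}_{p}^{e})^{k}$; such $T,S$ exist whenever $q\mid m$, $q\mid n$, $q^{k}\mid mn$, with the sole exception $(k,m,n,q)=(2,2,2,2)$, where the only candidate matrix $\left(\begin{smallmatrix}1&1\\1&1\end{smallmatrix}\right)$ is singular. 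Inflation removes the restriction that $m,n$ be powers of $p$, except that for $q=2$, $k=2$ with $2\parallel m$ and $2\parallel n$ the power-of-$2$ core is the forbidden $(2,2,2,2)$. For composite $q=\prod q_{i}$ (pairwise coprime prime powers), an entrywise Chinese-remainder product of designs over the $q_{i}$ gives a design over $q$; the $p_{i}$-parts of $m$ and $n$ distribute so that every factor stays admissible, the only obstruction being a factor equal to $(2,2,2,2)$, which forces $k=2$ with $2\parallel q$, $2\parallel m$, $2\parallel n$. The residual case $k=2$, $\lambda=1$, $m=n=q$ is a pair of MOLS of order $q$ (available for all $q\neq2,6$), and the remaining $k=2$ tuples are handled by pairs of orthogonal frequency rectangles, whose existence in the relevant small orders --- in particular order $6$ --- substitutes for the missing MOLS$(6)$.

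\emph{Main obstacle.} The main difficulty will be the case analysis needed to show that the exceptional set is \emph{exactly} $\{(2,2,n,2):4\nmid n\}\cup\{(2,6,6,6)\}$. This will require, first, the linear-algebra lemma behind the prime-power construction --- existence of a surjective block matrix over $\mathbb{F}_{p}$ with every coordinate block surjective --- which is routine for $p\ge3$ but must be verified for $p=2$ and small $k$, where the lone genuine failure $(2,2,2,2)$ lives; and second, pinning down the finite list of ``overshoot'' tuples (necessarily $k=2$, with $2$ exactly dividing $q$ and at least one of $m,n$) whose minimal reduction is one of the two exceptions but which are still constructible, and building each --- ultimately by exhibiting orthogonal frequency squares/rectangles of order $6$. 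Once these are in hand, inflation will propagate the core designs to every remaining admissible tuple, completing the proof of sufficiency.
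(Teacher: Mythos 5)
First, a point of context: the paper does not prove this statement --- Theorem~\ref{thm:prevresult} is quoted from \cite{rahim2021row-column} --- so there is no internal proof to compare against. Judged on its own terms, your necessity argument is correct and complete: the divisibility conditions follow as you say, the $(2,6,6,6)$ exclusion is exactly the nonexistence of a pair of MOLS of order $6$, and your two-stage count for $k=q=m=2$ correctly forces $4\mid n$. Your sufficiency toolkit is also the right one in spirit, and largely coincides with machinery this paper develops for higher strength: the linear core is Theorem~\ref{thm:FR.Polynomilas.existence.} with $t=1$ (where $t$-independence degenerates to ``every row nonzero''), the Chinese-remainder product is Lemma~\ref{products}, and your inflation moves are Corollary~\ref{cor:blowup}.

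The gap is in the claim that these two \emph{multiplicative} inflation moves propagate a finite list of cores to every non-exceptional admissible tuple. They do not. Take $I_2(6,10,2,1)$: it is admissible and not excluded, yet every tuple obtained by dividing $6$ or $10$ by an integer $c>1$ is either inadmissible ($2\nmid 3,5$) or an instance of exception (ii) ($I_2(2,10,2,1)$, $I_2(2,6,2,1)$), and the prime-power linear core requires $m,n$ to be powers of $2$. The same happens for the whole family $k=2$, $q=2$, $m\equiv n\equiv 2\pmod 4$, $m,n\geq 6$, and again for $q=6$ with $I_2(6,18,6,1)$, where in addition the CRT factorisation is forced to contain the nonexistent $I_2(2,2,2,1)$. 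What is missing is the \emph{additive} juxtaposition move (Lemma~\ref{lem:glueing}: $I_k(m,n,q,t)$ and $I_k(m,n',q,t)$ give $I_k(m,n+n',q,t)$), together with the explicit base designs it needs: a pair of orthogonal $6\times 6$ binary frequency squares for $I_2(6,6,2,1)$, and --- more delicately --- a direct construction of $I_2(6,18,6,1)$, which cannot itself be glued from smaller pieces because the only admissible decomposition $18=6+12$ requires the nonexistent $I_2(6,6,6,1)$. Your sketch gestures at ``frequency rectangles in the relevant small orders,'' but without the gluing lemma this is an infinite family, and with it you still owe at least the two base cases just named. A secondary inaccuracy: the claimed reduction of every admissible tuple to $\lambda=1$ fails, e.g.\ for $(k,m,n,q)=(1,4,4,4)$ neither dimension can lose a factor of $2$ while keeping $q\mid m$ and $q\mid n$; this is harmless only because the linear core needs $[\,T\mid S\,]$ to be surjective rather than bijective, which you should say explicitly.
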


Note that an array $I_k(n,n,q,t)$ implies the existence of a set of $k$ mutually orthogonal frequency squares (MOFS) of size $n$ based on a set of size $q$. Thus, the existence of row-column factorial designs also relates to the existence of frequency squares and Latin squares. For example, the exceptions in the previous theorem include pairs of orthogonal Latin squares of orders $2$ and $6$, which are well-known not to exist. Some results, including a table of lower bounds, related to the existence of MOFS can be found in \cite{cavenaghwanless2020,laywine2001table, li2014some}.


In this manuscript we focus on row-column factorial designs of strength $2$ and higher.
  Binary row-column factorial designs of strength $1$ which come as close as possible to strength $2$ are studied in \cite{godolphin2019construction}, in the case when the dimensions of the array are powers of $2$. The motivation in \cite{godolphin2019construction} is to be able to estimate as many two-factor interactions as possible without confounding, given fixed parameters. 
 
In the binary strength $2$ case we will frequently make use of Hadamard matrices. A  
\textit{Hadamard matrix} $H(n)$ is a square matrix of order $n$, having entries from the set $ \{1,-1\} $ such that any two rows are orthogonal, i.e., it satisfies the equation:
$H(n)H(n)^T=nI_n.$
 If a Hadamard matrix $H(n)$ exists, then either $n=2$ or $n$ is divisible by $4$. However, the converse is an open problem known as the {\em Hadamard conjecture}; the smallest value for which it is not known whether a Hadamard matrix exists or not is 668 \cite{djokovic2007hadamard}.
 
  The following lemma gives a relationship between binary orthogonal arrays of strength 2 and 3 and the Hadamard matrices of order $4m.$
\begin{lemma} \cite[p.~148]{hedayat1999orthogonal}
Let $m\geq 4$. Orthogonal arrays OA$(4m, 4m - 1,2,2)$ and OA$(8m, 4m, 2,3)$ exist if and only if there exists a Hadamard matrix order $4m$.
\label{haddd} 
\end{lemma}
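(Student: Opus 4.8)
The plan is to reduce everything to the $\{1,-1\}$ world via the substitution $0\mapsto 1$, $1\mapsto -1$, and to exploit that a Hadamard matrix $H$ of order $4m$ satisfies $H^TH=HH^T=4m\,I_{4m}$, so that its columns---not only its rows---are pairwise orthogonal. I will use two elementary bookkeeping facts about a $\pm1$-matrix with $L$ rows, $4\mid L$: it is a strength-$1$ orthogonal array precisely when every column sums to $0$; and a $\pm1$-matrix with exactly two columns is a strength-$2$ orthogonal array (necessarily of index $L/4$) precisely when both columns sum to $0$ and their inner product is $0$. Both follow by solving the linear system in the four sign-pattern counts.

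For the direction ``Hadamard matrix $\Rightarrow$ the two arrays'', let $H$ be a normalised Hadamard matrix of order $4m$. Deleting its first column leaves a matrix whose columns are pairwise orthogonal and each orthogonal to the all-ones vector, so by the bookkeeping facts the associated binary $4m\times(4m-1)$ array is an OA$(4m,4m-1,2,2)$. For strength $3$, set $C=\binom{H}{-H}$, an $8m\times 4m$ $\pm1$-matrix; each column of $C$ sums to $0$, so $C$ is at least a strength-$1$ array. Fix three columns of $C$, coming from columns $u,v,w$ of $H$. Each row $i$ of $H$ reads $\varepsilon=(u_i,v_i,w_i)$ on these coordinates, while the matching row of $-H$ reads $-\varepsilon$; hence the number of rows of $C$ realising a pattern $\varepsilon$ equals $N_\varepsilon+N_{-\varepsilon}$, where $N_\varepsilon=\#\{i : (u_i,v_i,w_i)=\varepsilon\}$. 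But $N_\varepsilon+N_{-\varepsilon}=\#\{i : u_iv_i=\varepsilon_1\varepsilon_2,\ u_iw_i=\varepsilon_1\varepsilon_3\}$, a sign-pattern count of the two-column $\pm1$-matrix of length $4m$ with columns $x=(u_iv_i)_i$ and $y=(u_iw_i)_i$. That array is a strength-$2$ orthogonal array of index $m$, because $\sum_ix_i=\langle u,v\rangle=0$, $\sum_iy_i=\langle u,w\rangle=0$ and $\sum_ix_iy_i=\sum_iv_iw_i=\langle v,w\rangle=0$ (the last using $u_i^2=1$), all three vanishing since $H^TH=4m\,I_{4m}$. So each of the eight sign patterns occurs $m$ times in the three chosen columns of $C$, and $C$ yields an OA$(8m,4m,2,3)$.

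For the converse I use two reductions. If $B$ is an OA$(4m,4m-1,2,2)$ written as a $\pm1$-matrix, strength $2$ forces every column of $B$ to sum to $0$ and every pair of columns to be orthogonal; appending the all-ones column gives a $4m\times 4m$ $\pm1$-matrix $H$ with $H^TH=4m\,I_{4m}$, and since $H$ is square this also gives $HH^T=4m\,I_{4m}$, so $H$ is a Hadamard matrix of order $4m$. If $D$ is an OA$(8m,4m,2,3)$, restrict to the $4m$ rows with entry $0$ in the last column (there are $4m$ by strength $1$) and delete that column; for any two surviving columns, adjoining the deleted column and using strength $3$ shows each of the four patterns occurs $m$ times, so the restricted array is an OA$(4m,4m-1,2,2)$, and the first reduction produces a Hadamard matrix of order $4m$. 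The two directions together give the equivalence.

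The step needing the most care is the strength-$3$ verification for $C=\binom{H}{-H}$: one must observe that pairing a sign pattern with its antipode makes the relevant count depend only on the pairwise products $u_iv_i$ and $u_iw_i$, collapsing an eight-way balance condition to a two-coordinate one governed exactly by the three off-diagonal entries of $H^TH$. It is worth stressing that the naive hope---that strength $2$ on every pair of columns already forces strength $3$---is false, as it omits the triple-product constraint $\sum_i u_iv_iw_i=0$; the antipodal symmetry of $C$ is precisely what renders that extra constraint irrelevant. The remaining ingredients are routine: the passage to a derived orthogonal array is standard, and ``$H^TH=4m\,I_{4m}\Rightarrow HH^T=4m\,I_{4m}$ for square $H$'' is elementary linear algebra.
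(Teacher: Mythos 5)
Your proof is correct. The paper does not prove this lemma --- it cites Hedayat and Sloane --- but the forward constructions it sketches (delete the first column of a normalized $H$; form $[H\mid -H]^T$) are exactly the ones you use, and your antipodal-pairing verification of strength $3$ together with the standard derived-array reduction for the converse correctly supplies all the details the citation leaves implicit.
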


It is worth mentioning how orthogonal arrays can be constructed from Hadamard matrices as per the previous lemma, as this idea is frequently applied in Sections 5 and 6, where we focus on binary arrays.  
Let $H$ be a Hadamard matrix of order $4m$. Assume that $H$ is in {\em normalized form}; that is, we assume the first row and column of $H$ only contain the entry $1$. Now delete the first column and replace each $-1$ with $0$. The resultant array is 
an OA$(4m,4m-1,2,2)$. Next, consider the array $[H|-H]^T$  
and again replace each $-1$ with $0$; the resultant array is an OA$(8m,4m,2,3)$. The rows of such an array are the codewords of a code known as a {\em Hadamard code} \cite{horadam2012hadamard}. 

For a binary vector ${\mathbf v}$, we often say that its {\em weight} $\omega({\mathbf v})$ is equal to the number of $1$'s in ${\mathbf v}$. 
It is also often convenient to say that two binary vectors ${\bf v}$ and ${\bf w}$ of length $4k$ are {\em orthogonal}  if each has weight $2k$ and ${\bf v}\cdot {\bf w}=k$. So in a binary orthogonal 
array of strength $2$, each pair of columns is necessarily orthogonal. 

The previous lemma, together with the Bose-Bush bound for orthogonal arrays (\cite{Colbourn:2006:HCD:1202540}; \cite{plackettburman} originally) implies the following necessary conditions for strength $2$ row-column factorial designs.  

\begin{lemma}
Let $m\leq n$. If there exists an array of type $I_k(m,n,q,2)$, 
 then $k\leq (m-1)/(q-1)$.  If 
 there exists an array of type $I_{m-1}(m,n,2,2)$, then there is a Hadamard matrix of order $m$. 
 \label{2strong}
 \end{lemma}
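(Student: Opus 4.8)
The plan is to read both halves of the statement off the orthogonal arrays that sit inside the columns of the design, using only the two external facts already quoted.

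First I would establish the bound $k\le (m-1)/(q-1)$. Suppose an array of type $I_k(m,n,q,2)$ exists. By the definition of strength $2$, the $m$ vectors occupying any one column are the rows of an $\mathrm{OA}(m,k,q,2)$. The Bose--Bush bound for strength-$2$ orthogonal arrays (which in particular gives $N\ge 1+k(q-1)$ for an $\mathrm{OA}(N,k,q,2)$; see \cite{Colbourn:2006:HCD:1202540}) applied to this array yields $m\ge 1+k(q-1)$, i.e.\ $k\le (m-1)/(q-1)$. The hypothesis $m\le n$ enters only to record that this is the stronger of the two available bounds, the rows giving the weaker $k\le (n-1)/(q-1)$.

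Next I would derive the Hadamard conclusion. Assume an array of type $I_{m-1}(m,n,2,2)$ exists. As above, one column is an $\mathrm{OA}(m,m-1,2,2)$. Since the existence of the design makes its parameters admissible, Lemma~\ref{triviality} (with $q=2$, $t=2$) gives $4\mid m$; write $m=4b$, so the column array is an $\mathrm{OA}(4b,4b-1,2,2)$. If $b\ge 4$, Lemma~\ref{haddd} immediately produces a Hadamard matrix of order $4b=m$. If $b\in\{1,2,3\}$, i.e.\ $m\in\{4,8,12\}$, a Hadamard matrix of order $m$ is classically known to exist, so the conclusion holds in all cases.

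There is no real obstacle: each half is a one-line deduction, from the Bose--Bush bound and from Lemma~\ref{haddd} respectively. The only points needing a moment's care are (i) noticing that it is the columns (of length $m$), not the rows, that supply the binding orthogonal-array constraint, and (ii) disposing by hand of the three small orders $m\in\{4,8,12\}$, since Lemma~\ref{haddd} is stated only for orders at least $16$.
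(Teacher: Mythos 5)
Your proof is correct and follows exactly the route the paper intends: the lemma is stated there as an immediate consequence of the Bose--Bush bound applied to a column (an OA$(m,k,q,2)$) and of Lemma~\ref{haddd} applied to a column of an $I_{m-1}(m,n,2,2)$. Your explicit handling of the small orders $m\in\{4,8,12\}$, which fall outside the stated hypothesis $m\geq 16$ of Lemma~\ref{haddd}, is a careful touch the paper leaves implicit.
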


Since the Hadamard conjecture is a well-studied but unsolved open problem, it is likely that generalizing 
Theorem \ref{thm:prevresult}
 to the strength $2$ case, that is finding necessary and sufficient conditions for the existence of a row-column factorial design of strength $2$, is untenable even in the binary case. 
 
 The following result on strength $3$ binary orthogonal arrays is well-known \cite{Colbourn:2006:HCD:1202540, rao1947factorial}. 
 \begin{lemma}
 If an OA$(m,n,2,3)$ exists, then $m\leq 2^{n-1}$. Moreover an OA$(2^{n-1},n,2,3)$ exists, the rows of which are all the binary vectors of length $n$ and odd weight.   
 \label{3strong}
 \end{lemma}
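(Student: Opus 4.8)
The statement splits into an upper bound on the number of runs and an explicit array attaining it, and I would argue these in turn. Throughout I assume $n\geq 4$: by Lemma \ref{triviality} a strength-$3$ binary array already has $8\mid m$, so for $n\leq 3$ no array can meet $m\leq 2^{n-1}$ and the content of the statement lies in the range $n\geq 4$.

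For the bound, I would first treat the case --- the one actually used in this paper --- in which the rows of the orthogonal array form an $\mathbb{F}_2$-linear code $C\leq\mathbb{F}_2^{n}$. Then $m=2^{\dim C}$, and, by the standard correspondence between linear codes and linear orthogonal arrays, strength $3$ is equivalent to the dual code $C^{\perp}$ having minimum distance at least $4$. In particular $C^{\perp}\neq\{\mathbf{0}\}$, so $\dim C=n-\dim C^{\perp}\leq n-1$ and hence $m=2^{\dim C}\leq 2^{n-1}$. For a general orthogonal array I would appeal to the classical strength-$3$ bounds surveyed in \cite{Colbourn:2006:HCD:1202540} and originating in \cite{rao1947factorial}, exactly as the excerpt does.

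For the array attaining the bound, the key observation is that the binary vectors of length $n$ of odd weight form a coset of the even-weight code $\mathcal{E}_n=\ker\bigl(x\mapsto\sum_{i=1}^{n}x_i\bigr)\leq\mathbb{F}_2^{n}$, which is linear of dimension $n-1$. Now $\mathcal{E}_n^{\perp}$ is the repetition code $\{\mathbf{0},\mathbf{1}\}$, whose minimum distance is $n\geq 4$; hence $\mathcal{E}_n$, and with it every coset of $\mathcal{E}_n$ --- in particular the set of odd-weight vectors --- is an orthogonal array of strength $n-1\geq 3$. Since it has $2^{n-1}$ rows, this is the desired OA$(2^{n-1},n,2,3)$. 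Alternatively one can check strength $3$ by hand: after fixing any three coordinates to prescribed values, the remaining $n-3\geq 1$ coordinates are free subject to a single parity constraint, which holds for exactly $2^{n-4}=2^{n-1}/8$ of the $2^{n-3}$ possibilities.

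Neither part is deep. The one step that genuinely has to be pinned down is the identification of the odd-weight set with a coset of $\mathcal{E}_n$ --- equivalently, the parity count just sketched --- since this is what yields strength $3$; the remaining ingredients are a one-line linear-algebra observation and a citation to the standard bounds for strength-$3$ orthogonal arrays.
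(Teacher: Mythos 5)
First, note that the paper offers no proof of this lemma at all --- it is cited as well known --- so there is no in-paper argument to compare your route against; the substantive issue is that the first assertion, read literally, is false, and your argument for it breaks exactly where the statement does. The step ``in particular $C^{\perp}\neq\{\mathbf{0}\}$'' is a non sequitur: the full space $C=\mathbb{F}_2^{n}$ is a linear orthogonal array of strength $n\geq 3$ whose dual is trivial (so the minimum-distance condition on $C^{\perp}$ is vacuous), and it has $m=2^{n}>2^{n-1}$ rows. More generally, juxtaposing $\lambda$ copies of any OA$(m,n,2,3)$ yields an OA$(\lambda m,n,2,3)$, so no upper bound on $m$ in terms of $n$ alone can hold, linear or not, and an appeal to the classical strength-$3$ bounds cannot supply one --- those bounds go the other way. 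You in fact noticed this yourself for $n\leq 3$ but treated it as a harmless domain restriction rather than as evidence that the inequality is reversed. The statement actually needed downstream (to extract $k\leq 2^{M-1}$ from the existence of an OA$(2^M,k,2,3)$ in Lemma \ref{strengththree}) is the Rao/Bose--Bush-type bound that a binary strength-$3$ orthogonal array with $m$ runs has at most $m/2$ factors, i.e.\ $n\leq m/2$. That is what you should prove: for instance, write the columns as $\pm 1$ vectors $c_1,\dots,c_n$ of length $m$ and check that strengths $1$, $2$ and $3$ together make the $2n$ nonzero vectors $\mathbf{1},c_1,\dots,c_n,c_1\circ c_2,\dots,c_1\circ c_n$ (with $\circ$ the entrywise product) pairwise orthogonal in $\mathbb{R}^{m}$, whence $2n\leq m$.

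Your second half is correct: identifying the odd-weight vectors as a coset of the even-weight code, whose dual is the repetition code of minimum distance $n$, gives strength $n-1\geq 3$ for $n\geq 4$, and the direct parity count ($2^{n-4}$ completions after fixing any three coordinates) verifies the same thing. The restriction $n\geq 4$ is genuinely needed here, since for $n=3$ the four odd-weight vectors cannot form a strength-$3$ array. So the construction part stands; only the bound needs to be restated, and then reproved, in the correct direction.
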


Let $C$ and $R$ be orthogonal arrays each of degree $k$ with $q$ levels with the zero vector in the first row.  
We define $C\boxplus R$ to be the array such that row $i$ and column $j$
contains the vector sum, calculated in ${\mathbb F}_q$, of the $i$th row of $C$ and the $j$th row of $R$. In turn, we call an array $L$ of type $I_k(m,n,q,t)$ {\em abelian} if and only if there exists $C$ and $R$ such that 
$L=C \boxplus R$, where 
 $C$ is an OA$(m,k,q,t)$ and $R$ is an OA$(n,k,q,t)$.
 (Here we use $C$ and $R$ to remind the reader that the first {\it C}olumn and first {\it R}ow of $L$ are, respectively, the orthogonal arrays $C$ and $R$.) 
If the replication is $1$, then such an array is abelian if and only if it is the subarray of the addition table for ${\mathbb F}_q^k$. 
Most constructions in this paper are abelian, however Section 5 contains 
some non-abelian constructions. 

In Section 2, we give some general recursive constructions that apply to all row-column factorial designs. In Section 3 we focus on the abelian case, 
using linear algebra to show that row-column factorial designs can be constructed from orthogonal arrays and matrices with certain independence properties.  
These are applied in Section 4 where we consider the strength $2$ case with an arbitrary number of levels. We solve this case completely when 
the number of levels $q$ is a prime power and 
the dimensions of the array are each a power of $q$; see Theorem \ref{biggerthanelvis}. This generalizes the binary case solved in  \cite{godolphin2019construction}. 

In Section 5 we find necessary and sufficient conditions for the existence of $I_{k}(4m,n,2,2)$ whenever $m\leq 5$; or $m$ is odd assuming the truth of  Conjecture \ref{whoknows}.  We 
also show that  $I_{k+\alpha}(2^{\alpha}b,2^k,2,2)$ exists whenever
$\alpha\geq 2$ and $2^{\alpha}+\alpha+1\leq k<2^{\alpha}b-\alpha$, assuming there exists a Hadamard matrix of order $4b$ (Theorem \ref{biggercases}). 
  Finally in Section 6 we consider the strength $3$ binary case, solving 
this whenever the dimensions are powers of $2$ (Theorem \ref{strength3}).

\section{General results}
 
 In this section we list some general observations and results that can be applied to row-column factorial designs of any strength. 
 
We start with some straightforward lemmas. 

\begin{lemma}
If $D$ is an array of type $I_k(m,n,q,t)$ then: 
\begin{itemize}
\item  $D$ is also an array of type $I_k(m,n,q,t')$
 for each $t'$ such that $1\leq t'\leq t$;
\item there exists an array of type $I_{k'}(m,n,q,t')$ for each $k'$
such that $1\leq k'\leq k$. 
\end{itemize}
\label{subarrays}
\end{lemma}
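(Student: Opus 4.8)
The plan is to verify both bullets directly from the definitions; neither requires any construction beyond passing to sub-objects of $D$.

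For the first bullet, the key sub-fact is that an orthogonal array of strength $t$ is automatically an orthogonal array of strength $t'$ for every $t'$ with $1\le t'\le t$. Indeed, fix any $N\times t'$ submatrix $B$ of an OA$(N,k,q,t)$ and extend it to an $N\times t$ submatrix $B'$ by adjoining $t-t'$ further columns. A fixed $1\times t'$ vector $u$ occurs as a row of $B$ exactly as often as the $q^{t-t'}$ vectors of $B'$ that project to $u$ occur in total, namely $q^{t-t'}\cdot(N/q^{t})=N/q^{t'}$ times; hence $B$ is the $t'$-column submatrix of an OA$(N,k,q,t')$. Applying this to each row and each column of $D$ (each of which is, by hypothesis, the row set of a strength-$t$ orthogonal array of the appropriate size) shows that $D$ is also of type $I_k(m,n,q,t')$.

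For the second bullet, using equivalence operation~(d) we may assume we wish to keep the first $k'$ coordinates of each vector, and we let $D'$ be the array obtained from $D$ by deleting the last $k-k'$ coordinates of every entry. Two things must be checked. First, $D'$ arranges $\lambda'\times[q]^{k'}$ with $\lambda'=mn/q^{k'}$: a fixed $v\in[q]^{k'}$ occupies exactly those cells whose $D$-entry lies among the $q^{k-k'}$ length-$k$ vectors projecting to $v$, and each of those occurs $\lambda=mn/q^{k}$ times in $D$, giving $q^{k-k'}\lambda=mn/q^{k'}$ cells in total; in particular $q^{k'}\mid mn$. Second, each row (column) of $D'$ is obtained from a strength-$t$ OA of degree $k$ by deleting columns, hence — by the counting argument of the previous paragraph, applied to $\ell$-column submatrices with $\ell\le\min(t,k')$ — is the row set of an OA of degree $k'$ and strength $\min(t,k')$. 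Thus $D'$ has type $I_{k'}(m,n,q,\min(t,k'))$, and a further application of the first bullet gives type $I_{k'}(m,n,q,t')$ for every $t'\le\min(t,k')$, which is the full meaningful range.

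There is no real obstacle here: the statement is a bookkeeping exercise, and the only points needing a little care are (i) distinguishing ``deleting columns'' in the sense of deleting coordinate positions inside the vectors (an operation of equivalence type~(d)) from deleting rows or columns of the array itself, and (ii) confirming that the replication index of $D'$ is exactly the value $mn/q^{k'}$ demanded by the definition of $I_{k'}$, which is precisely the identity $q^{k-k'}\cdot(mn/q^{k})=mn/q^{k'}$ used above.
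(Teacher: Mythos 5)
Your proof is correct, and it fills in exactly the routine definitional verification that the paper omits (the lemma is stated there without proof as one of several ``straightforward lemmas''). Both steps — the counting argument showing strength $t$ implies strength $t'\le t$, and the projection onto $k'$ coordinates with the replication check $q^{k-k'}\cdot(mn/q^{k})=mn/q^{k'}$ together with the observation that the meaningful range of $t'$ in the second bullet is $t'\le\min(t,k')$ — are handled carefully and match the argument the authors evidently have in mind.
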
   

\begin{lemma}
If there exist arrays of type $I_k(m,n,q,t)$ and $I_k(m',n,q,t)$ there exists an array of type $I_k(m+m',n,q,t)$. 
If there exist arrays of type $I_k(m,n,q,t)$ and $I_k(m,n',q,t)$ there exists an array of type $I_k(m,n+n',q,t)$.
\label{lem:glueing}
\end{lemma}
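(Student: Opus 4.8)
The plan is to prove both halves of the lemma by the obvious ``stacking'' operation; the only content is a short bookkeeping check on the index and a one-line counting argument showing that concatenating two orthogonal arrays of a common strength again has that strength.

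First I would prove the first statement. Let $A$ be an array of type $I_k(m,n,q,t)$ and $B$ an array of type $I_k(m',n,q,t)$, and let $D$ be the $(m+m')\times n$ array obtained by placing $A$ directly above $B$. Since $A$ realizes the multiset $(mn/q^k)\times[q]^k$ and $B$ realizes $(m'n/q^k)\times[q]^k$ (in particular $q^k\mid mn$ and $q^k\mid m'n$, hence $q^k\mid (m+m')n$), the array $D$ realizes $((m+m')n/q^k)\times[q]^k$, which is exactly the multiset required of an array of type $I_k(m+m',n,q,t)$. It remains to verify the strength conditions on rows and columns of $D$. The rows are immediate: every row of $D$ is a row of $A$ or of $B$, so the vectors in it are the rows of an OA$(n,k,q,t)$ by hypothesis. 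For the columns, fix a column of $D$: it is the concatenation of the corresponding column $a$ of $A$ (a multiset of $m$ vectors forming an OA$(m,k,q,t)$) with the corresponding column $b$ of $B$ (forming an OA$(m',k,q,t)$). For any choice of $t$ coordinates, each vector in $[q]^t$ occurs $m/q^t$ times among the restrictions of the entries of $a$ and $m'/q^t$ times among those of $b$, hence $(m+m')/q^t$ times in total; thus the column forms an OA$(m+m',k,q,t)$. Therefore $D$ is of type $I_k(m+m',n,q,t)$.

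The second statement is simply the transpose: given arrays of type $I_k(m,n,q,t)$ and $I_k(m,n',q,t)$, place them side by side to obtain an $m\times(n+n')$ array and repeat the identical argument with the roles of rows and columns exchanged.

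I do not anticipate a genuine obstacle here; the only point requiring a line of care is the observation that stacking orthogonal arrays of a common strength $t$ again yields an orthogonal array of strength $t$, together with the divisibility bookkeeping needed to see that the combined array realizes the correct full-factorial multiset with index $(m+m')n/q^k$ (respectively $m(n+n')/q^k$).
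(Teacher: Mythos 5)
Your proof is correct and is exactly the ``stacking'' argument the authors intend: the paper states this lemma without proof as a straightforward observation, and your verification (rows inherited directly, columns concatenated and checked by the $m/q^t + m'/q^t$ count, plus the divisibility bookkeeping for the factorial multiset) fills in precisely the routine details. Nothing further is needed.
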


	The proof of the following lemma is
	a Kronecker product construction based on a similar construction for orthogonal arrays (Theorem III.7.20 from \cite{Colbourn:2006:HCD:1202540}, originally \cite{bushka}).

	\begin{lemma}
		\sloppy If there exist arrays of type $I_k(m,n,q,t)$ and $I_k(m',n',q',t)$ then 
		there exists an array of type $I_k(mm',nn',qq',t)$. 
		\label{products} 
	\end{lemma}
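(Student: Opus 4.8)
The plan is to mimic the Kronecker (Bush--Kishen) product for orthogonal arrays, but carried out on the whole array of vectors at once. Write $A=(A_{ij})$ for an array of type $I_k(m,n,q,t)$ and $B=(B_{i'j'})$ for an array of type $I_k(m',n',q',t)$, so that each $A_{ij}\in[q]^k$ and each $B_{i'j'}\in[q']^k$. Fix the bijection $\phi\colon [q]\times[q']\to[qq']$ given by $\phi(a,b)=aq'+b$, and extend it coordinatewise to a bijection $\Phi\colon[q]^k\times[q']^k\to[qq']^k$. I would then define an $(mm')\times(nn')$ array $C$, with rows indexed by pairs $(i,i')\in[m]\times[m']$ and columns by pairs $(j,j')\in[n]\times[n']$, by setting $C_{(i,i'),(j,j')}=\Phi(A_{ij},B_{i'j'})$.

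First I would check that $C$ is an arrangement of $\lambda''\times[qq']^k$ with $\lambda''=mm'nn'/(qq')^k$. Each $v\in[qq']^k$ equals $\Phi(v^{(1)},v^{(2)})$ for a unique pair $(v^{(1)},v^{(2)})\in[q]^k\times[q']^k$, and the cells of $C$ holding $v$ are exactly the $((i,i'),(j,j'))$ with $A_{ij}=v^{(1)}$ and $B_{i'j'}=v^{(2)}$. Since $A$ holds $v^{(1)}$ in exactly $\lambda_A=mn/q^k$ cells and $B$ holds $v^{(2)}$ in exactly $\lambda_B=m'n'/q'^k$ cells, $v$ occurs $\lambda_A\lambda_B=\lambda''$ times; the divisibility $(qq')^k\mid mm'nn'$ needed for this to be an integer follows from Lemma \ref{triviality} applied to $A$ and $B$.

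Next I would verify the strength condition for rows, the columns being symmetric. Fix a row $(i,i')$ of $C$ and stack its $nn'$ entries into an $(nn')\times k$ matrix $M$ over $[qq']$, whose row indexed by $(j,j')$ is $\Phi(A_{ij},B_{i'j'})$. By hypothesis, row $i$ of $A$ stacks into an OA$(n,k,q,t)$, say $P$ with $P_j=A_{ij}$, and row $i'$ of $B$ stacks into an OA$(n',k,q',t)$, say $Q$ with $Q_{j'}=B_{i'j'}$. Given any $t$ columns $\ell_1<\dots<\ell_t$ and any target $u\in[qq']^t$, write $u$ as the coordinatewise combination via $\phi$ of some $a\in[q]^t$ and $b\in[q']^t$; then the rows of $M$ that equal $u$ in those columns are precisely the pairs $(j,j')$ with $P_j$ equal to $a$ and $Q_{j'}$ equal to $b$, of which there are $(n/q^t)(n'/q'^t)=nn'/(qq')^t$. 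Hence $M$ is an OA$(nn',k,qq',t)$, and the analogous computation for columns shows $C$ has type $I_k(mm',nn',qq',t)$.

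I do not expect a genuine obstacle here: once the combining bijection $\Phi$ is in place, everything is bookkeeping, and the only mild care required is to keep straight the two senses of ``row'' (a line of the ambient array versus a row of an orthogonal array sitting inside such a line). For brevity one could alternatively cite the orthogonal-array Kronecker product (Theorem III.7.20 of \cite{Colbourn:2006:HCD:1202540}) as a black box, applied to each line of $C$, together with the elementary replication count above.
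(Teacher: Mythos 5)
Your construction is exactly the paper's: the paper encodes the pair $(i,i')$ as $i=xm'+x'$ and places $q'D(x,y)+D'(x',y')$ in each cell, which is precisely your coordinatewise bijection $\Phi$, and the replication and strength verifications proceed by the same decomposition of a target vector over $[qq']$ into its $[q]$ and $[q']$ components. The proposal is correct and takes essentially the same approach as the paper.
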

	
	\begin{proof}
	
		
		
		\sloppy 
		
		Let $D$ and $D'$ be arrays of type $I_k(m,n,q,t)$ and $I_k(m',n',q',t)$, respectively. 
	We construct an $mm'\times nn'$ array $ D \boxtimes D'$ as follows. 
	For each $(i,j)\in [mm']\times [nn']$,  write 
	$i=xm'+x'$ and $j=yn'+y'$ where $x\in [m]$, $x'\in [m']$, 
	$y\in [n]$, $y'\in [n']$, noting that the choices of $x$, $x'$, $y$ and $y'$  are unique and depend on $i$ and $j$. 
	In cell $(i,j)$ we place the vector $q'D(x,y)+D'(x',y')$, where $D(x,y)$ and $D'(x',y')$ are the vectors in cells $(x,y)$ and $(x',y')$ of $D$ and $D'$, respectively. 
	
	We next verify that $D\boxtimes D'$ is an array of type $I_k(mm',nn',qq',t)$.
	Fix a set $T$ of $t$ coordinates in column $j$ of $ D \boxtimes D' $ and let $(v_1,v_2,\dots ,v_t)\in [qq']^t$. As above, write 
	$j=yn'+y'$ for unique	$y\in [n]$, $y'\in [n']$. 
	For each $\alpha\in [t]$, let $x_{\alpha}\in [q]$ and $x_{\alpha}'\in [q']$ be unique solutions to $v_{\alpha}=x_{\alpha}q'+x_{\alpha}'$. 
	
	Since $D$ is of strength $t$, the vector $(x_1,x_2,\dots ,x_t)$ appears $n/q^t$ times in column $y$ of $D$ in the set of positions $T$. Similarly, the vector $(x_1',x_2',\dots ,x_t')$ appears $n'/(q')^t$ times in column $y'$ of $D'$ in the same set of positions $T$. 
	Thus $(v_1,v_2,\dots ,v_t)$ appears precisely $nn'/(qq')^t$ times in column 
	$j$ of $D\boxtimes D'$. 
By the same argument in transpose, each vector in $[qq']^t$ appears 
$mm'/(qq')^t$ in each row of 
 $D\boxtimes D'$.
	
	It remains to show that each vector in $[qq']^k$ appears the same number of times in the array 
	 $D\boxtimes D'$. The idea is similar to above. 
	 Let $(v_1,v_2,\dots ,v_k)\in [qq']^k$. 
	For each $\alpha\in [k]$, let $x_{\alpha}\in [q]$ and $x_{\alpha}'\in [q']$ be unique solutions to $v_{\alpha}=x_{\alpha}q'+x_{\alpha}'$. 
	 By the parameters of $D$ and $D'$, 
	 the vectors $(x_1,x_2,\dots ,x_k)$
	 and $(x_1',x_2',\dots ,x_k')$  
	 appear $mn/q^k$ and 
	 $m'n'/(q')^k$ times,  respectively, in the arrays $D$ and $D'$. 
	 Thus $(v_1,v_2,\dots ,v_k)$ appears precisely $mm'nn'/(qq')^k$ times in the array 
	 $D\boxtimes D'$. 
	\end{proof}

	
	 The $m\times n$ matrix of ${\bf 0}$ vectors of dimension $k$ is trivially an array $I_k(m,n,1,t)$ for any $1\leq k\leq t$. The following corollary is then immediate. 
	
	\begin{corollary}
	    \sloppy 
	    If there exists an array $I_k(m,n,q,t)$, then there exists an array $I_k(mm',nn',q,t)$ for any integers $m',n'\geq 1$. 
	    \label{cor:blowup}
	\end{corollary}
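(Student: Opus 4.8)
The plan is to obtain this as an immediate application of the product construction of Lemma~\ref{products}. Let $D$ be the given array of type $I_k(m,n,q,t)$, and let $D'$ be the $m'\times n'$ array in which every cell holds the all-zero vector of length $k$. As observed immediately before the corollary, $D'$ is an array of type $I_k(m',n',1,t)$: with $q'=1$ the alphabet $[1]^t$ contains only the all-zero word, which trivially occurs the requisite number of times in every $t$-subset of coordinates of each row and of each column, and the array consists of $m'n' = m'n'/1^k$ copies of the unique vector of $[1]^k$. (That the degree $k$ is a legitimate choice here, i.e.\ that $t\le k$, holds because $I_k(m,n,q,t)$ exists and is therefore admissible.) Applying Lemma~\ref{products} to $D$ and $D'$ produces an array of type $I_k(mm',nn',q\cdot 1,t)=I_k(mm',nn',q,t)$, as required.

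I do not expect any genuine obstacle: the only point requiring a moment's thought is that the construction $D\boxtimes D'$ of Lemma~\ref{products} still behaves correctly in the degenerate case $q'=1$, but inspecting that proof one sees the $x'_\alpha\in[1]$ are all forced to be $0$, the entry of cell $(i,j)$ reduces to $D(x,y)$, and every counting identity there remains valid when $q'=1$. Alternatively, one can avoid the single-level array altogether and argue directly from Lemma~\ref{lem:glueing}: gluing $m'$ copies of $D$ one above another yields $I_k(mm',n,q,t)$, and then gluing $n'$ copies of that array side by side yields $I_k(mm',nn',q,t)$. Either route is a one-line deduction; the role of the corollary is simply to record that both dimensions of a row-column factorial design can be freely inflated by arbitrary positive integer factors.
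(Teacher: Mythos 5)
Your primary argument is exactly the paper's: the all-zero $m'\times n'$ array is an $I_k(m',n',1,t)$, and Lemma~\ref{products} then yields $I_k(mm',nn',q,t)$, which is precisely how the paper derives the corollary from the remark preceding it. Both this and your alternative gluing route via Lemma~\ref{lem:glueing} are correct, so nothing further is needed.
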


\section{Abelian row-column factorial designs}

In this section we find necessary and sufficient conditions on 
orthogonal arrays $C$ and $R$ such that 
$C\boxplus R$ is a row-column factorial design of strength $t$, where at least one of $C$ or $R$ is a vector space.

Since every row (column) in $C\boxplus R$ is equivalent to the first row (respectively, column), we have the following observation. 

\begin{lemma}
Let $C$ be an OA$(m,k,q,t)$ and let $R$ be an OA$(n,k,q,t)$. Then 
$L=C\boxplus R$ is an array of type $I_k(m,n,q,t)$ 
if and only if $L$ is a row-column factorial design, that is, the set of entries of the cells of $L$ is the $\lambda \times [q]^k$ factorial design.
\label{justcheckthefirst}
\end{lemma}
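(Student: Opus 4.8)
The plan is to note first that the forward implication is immediate: being of type $I_k(m,n,q,t)$ means, by definition, being a row-column factorial design together with the strength-$t$ condition on every row and every column, so it trivially implies that $L$ is a row-column factorial design. All the content lies in the converse, where I assume $L=C\boxplus R$ is a row-column factorial design and must check that the vectors in each row and each column of $L$ form an orthogonal array of strength $t$.

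The one fact worth isolating is that translation preserves strength: if the rows of an $N\times k$ array over ${\mathbb F}_q$ form an OA$(N,k,q,t)$ and $v\in{\mathbb F}_q^k$ is fixed, then adding $v$ coordinatewise to every row again gives an OA$(N,k,q,t)$. This holds because, restricted to any fixed set of $t$ coordinates, the map $x\mapsto x+v$ is a bijection of ${\mathbb F}_q^t$; it therefore merely permutes the $q^t$ possible $t$-tuples and so preserves the property that each occurs exactly $N/q^t$ times among the rows. This is the precise statement behind the remark that every row, and every column, of $C\boxplus R$ is equivalent to the first one; since $C$ and $R$ have the zero vector in their first row, the first row of $L$ is literally $R$ and the first column is literally $C$.

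Granting this, the lemma is bookkeeping. By construction the multiset of vectors in row $i$ of $L$ is $\{C_i+R_j : j\in[n]\}$, where $C_i$ denotes the $i$th row of $C$ and $R_j$ the $j$th row of $R$; this is the translate by the fixed vector $C_i$ of the rows of $R$, which form an OA$(n,k,q,t)$, so by the translation fact row $i$ of $L$ forms an OA$(n,k,q,t)$. Symmetrically, column $j$ of $L$ is the translate by $R_j$ of the rows of $C$, hence forms an OA$(m,k,q,t)$. Thus $L$ automatically satisfies the strength-$t$ condition on all rows and columns, and once we also assume its cell entries form the $\lambda\times[q]^k$ factorial design, $L$ is by definition of type $I_k(m,n,q,t)$. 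There is no genuine obstacle; the only care needed is in stating the translation-invariance cleanly and in observing that this verification never invokes the factorial-design hypothesis, which is exactly why that hypothesis is the sole additional requirement.
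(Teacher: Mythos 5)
Your proof is correct and follows the same route as the paper, which simply notes that every row (column) of $C\boxplus R$ is a translate of the first row (column) and hence equivalent to an OA of the required strength; you have merely made explicit the translation-invariance argument that the paper leaves implicit. No issues.
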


We next consider the extreme case when every row is a factorial design.  

	\begin{lemma} 
	If there exists an orthogonal array OA$(m,k,q,t)$  
	then there exists an array of type $I_k(m,q^k,q,t)$.
 \label{letsgettrivial}
	\end{lemma}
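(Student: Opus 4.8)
The plan is to use the abelian construction directly: take the given orthogonal array as the first column $C$, take the full factorial $[q]^k$ itself (with replication $1$) as the first row $R$, and set $L=C\boxplus R$.

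First I would let $C$ be an OA$(m,k,q,t)$ and observe that permuting the levels within each column preserves the orthogonal-array property; so after such a relabelling I may assume the first row of $C$ is the zero vector $\mathbf 0$, which is exactly what the operation $\boxplus$ requires. Then I would let $R$ be the $q^k\times k$ array whose rows are all the vectors of $[q]^k$, listed in any order with $\mathbf 0$ first. This $R$ is an OA$(q^k,k,q,k)$, hence (since $t\le k$) in particular an OA$(q^k,k,q,t)$.

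Next I would form $L=C\boxplus R$, so that cell $(i,j)$ holds the sum in $\mathbb F_q^k$ of row $i$ of $C$ and row $j$ of $R$. For a fixed row index $i$, as $j$ ranges over all of $[q^k]$ the vector $C_i+R_j$ runs over every element of $[q]^k$ exactly once; hence every row of $L$ is a permutation of $[q]^k$. Therefore the multiset of all $m q^k$ entries of $L$ is exactly $m$ copies of each vector of $[q]^k$, i.e. the factorial design $m\times[q]^k$ (consistent with $\lambda = m q^k/q^k = m$). Since $C$ is an OA$(m,k,q,t)$ and $R$ is an OA$(q^k,k,q,t)$, Lemma \ref{justcheckthefirst} then immediately yields that $L$ is of type $I_k(m,q^k,q,t)$.

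I do not expect any genuine obstacle here: the crux is simply the observation that placing the complete factorial in one direction forces the global entry multiset to be correct for free, so the substantive part — orthogonality (of strength $t$) within every row and every column — is handed to us by Lemma \ref{justcheckthefirst}, which reduces the required verification to exactly what the hypothesis provides. The only point needing a word of care is the harmless normalization of $C$ so that $\boxplus$ is defined.
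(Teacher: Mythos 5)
Your proposal is correct and follows essentially the same route as the paper: take $R$ to be the full factorial $[q]^k$ viewed as an OA$(q^k,k,q,t)$, note that each row of $C\boxplus R$ is then trivially all of $[q]^k$, and invoke Lemma \ref{justcheckthefirst}. Your extra remark about relabelling levels so that $C$ has the zero vector in its first row is a harmless tidying of a detail the paper leaves implicit.
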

 
 	\begin{proof}
\sloppy 
Let $R$ be an OA$(q^k,k,q,t)$ where $t\leq k$. That is, the row vectors of $R$ are the  factorial design $[q]^k$.    
 Let $C$ be an OA$(m,k,q,t)$.  
Since the entries in each row of $C \boxplus R$ are trivially $[q]^k$, by the previous lemma 
 $C \boxplus R$ is an array of type $I_k(m,q^k,q,t)$.
	\end{proof}

In the following, given a matrix $A$ over the field $ \mathbb{F}_q $, let $<A>$ be a matrix whose row vectors are the rowspace of $A$; that is the vector space generated by the row vectors of $A$. 
An orthogonal array is called {\em linear} if its rows are the rows of a vector space (assuming without loss of generality that the zero vector is one of the rows). 
The theory of linear orthogonal arrays is closely related to that of linear codes; see for example Section 4.3 of 
\cite{hedayat1999orthogonal}. 

The following lemma is in some ways a standard result in this area, stemming from the original result by Bose (1961) \cite{bose1961some} which links the two theories. This lemma is also implied by Theorem 3.27 and Theorem 3.29 in \cite{hedayat1999orthogonal}.
We include a proof for thoroughness.  

Henceforth we say that a set $S$ of vectors is {\em $t$-independent} if every $t$-subset of $S$ is linearly independent \cite{damelin2007cardinality}. 

\begin{lemma}
Let $A$ be an $m\times n$ matrix of rank $m$ over the field ${\mathbb F}_q$ where $m\leq n$.    
Let $2\leq t\leq m$. Then $<A>$ is an OA$(q^m,n,q,t)$ if and only if
the set of columns of $A$ is $t$-independent. 
\label{fundam}
\end{lemma}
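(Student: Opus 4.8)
The plan is to prove both directions of the equivalence by connecting the orthogonal-array condition on $\langle A\rangle$ to the dual code / parity-check matrix perspective. Recall that since $A$ has rank $m$, the rowspace $\langle A\rangle$ is a $q$-ary linear code $\mathcal{C}$ of length $n$ and dimension $m$, so $|\langle A\rangle| = q^m$ and the array $\langle A\rangle$ (whose rows are the $q^m$ codewords) is an $q^m\times n$ array. The key structural fact I will use is the standard characterization: a linear code $\mathcal{C}$, viewed as an $|\mathcal{C}|\times n$ array of its codewords, is an $\mathrm{OA}(|\mathcal{C}|,n,q,t)$ if and only if the dual code $\mathcal{C}^{\perp}$ has minimum distance at least $t+1$. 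I would first recall or quickly reprove this: for any subset $T$ of $t$ coordinates, the projection map $\mathcal{C}\to \mathbb{F}_q^{T}$ is $\mathbb{F}_q$-linear, its image is a subspace, and every vector of $\mathbb{F}_q^T$ is hit equally often (namely $|\mathcal{C}|/q^{|T|}$ times) precisely when the projection is surjective; surjectivity of the projection onto every $t$-subset of coordinates is exactly the statement that $\mathcal{C}^{\perp}$ has no nonzero word of weight $\leq t$, i.e. $d(\mathcal{C}^{\perp})\geq t+1$.

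Next I would translate the dual-distance condition into the $t$-independence statement about the columns of $A$. Since $A$ is an $m\times n$ generator matrix for $\mathcal{C}$, a vector $x\in\mathbb{F}_q^n$ lies in $\mathcal{C}^{\perp}$ if and only if $Ax^{T}=0$, i.e. the columns of $A$ indexed by the support of $x$ are linearly dependent with coefficients given by the nonzero entries of $x$. Hence $\mathcal{C}^{\perp}$ contains a nonzero word of weight $w$ if and only if some $w$ columns of $A$ are linearly dependent. Therefore $d(\mathcal{C}^{\perp})\geq t+1$ is equivalent to: every set of at most $t$ columns of $A$ is linearly independent. Since $t\geq 2$ and linear independence of every $\leq t$ subset is implied by (indeed equivalent to, given no zero or repeated columns follows automatically) linear independence of every exactly-$t$ subset when $t\le m\le n$, this is precisely the condition that the columns of $A$ form a $t$-independent set in the sense defined just before the lemma. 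I would remark on the minor point that $t$-independence of the column set also rules out zero columns and "parallel" columns, which matches ruling out weight-$1$ and weight-$2$ words in $\mathcal{C}^{\perp}$; the hypothesis $2\le t$ makes these automatic.

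Assembling the two directions: if the columns of $A$ are $t$-independent, then $d(\mathcal{C}^{\perp})\ge t+1$, so $\langle A\rangle = \mathcal{C}$ is an $\mathrm{OA}(q^m,n,q,t)$; conversely, if $\langle A\rangle$ is an $\mathrm{OA}(q^m,n,q,t)$ then the projection onto every $t$-subset of coordinates is equidistributed hence surjective, forcing $d(\mathcal{C}^{\perp})\ge t+1$, hence every $t$ columns of $A$ are independent. I would also note that the size $q^m$ in the array matches $\mathrm{rank}(A)=m$ so no degeneracy occurs.

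I do not anticipate a genuine obstacle here, since this is a classical correspondence; the only real care needed is in bookkeeping: (i) making the surjectivity $\Leftrightarrow$ equidistribution step precise (using that a surjective linear map between finite vector spaces has all fibers of equal size), and (ii) cleanly matching "no dual codeword of weight $\le t$" with "every $t$ columns independent" including the degenerate weights $1$ and $2$, which is where the hypothesis $t\ge 2$ is used. If anything is slightly delicate, it is merely being careful that the lemma's $t$-independence is stated for $t$-subsets exactly, so I must observe that $t$-independence of all $t$-subsets implies $s$-independence for all $s\le t$ (a subset of a linearly independent set is linearly independent), which closes the loop.
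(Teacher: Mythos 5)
Your proposal is correct and, once the dual-code language is unwrapped, it is essentially the paper's own argument: both reduce the strength-$t$ condition to surjectivity (hence equal fiber sizes) of the linear projection onto each $t$-subset of coordinates, and both identify failure of surjectivity with a linear dependence among the corresponding $t$ columns of $A$. The paper simply argues directly on the $m\times t$ submatrix $B$ rather than routing through the minimum distance of the dual code $\mathcal{C}^{\perp}$, but the mathematical content is the same.
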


\begin{proof}
Let $C$ be a subset of $t$ distinct column vectors of $A$.
Let $B$ be the $m\times t$ sub-matrix which is $A$ restricted to these columns. 

If $C$ is a dependent set, there exists a non-zero vector ${\bf w}$ such that $B{\bf w}={\bf 0}$. 
Then, by the fundamental theorem of linear algebra, the vector ${\bf w}$ does not occur in $<B>$. 
In turn, within the set of vectors of $<A>$, within the $t$ positions determined by the columns of $C$,  the ordered sequence ${\bf w}$ 
does not occur. Thus $<A>$ does not have strength $t$.

Conversely, suppose that 
$C$ is an independent set. 
Let ${\bf w}\in {\mathbb F}_q^t$. 
Since $B$ has rank $t$, there are 
$q^{m-t}$ vectors ${\bf v}$ such that 
$B{\bf v}={\bf w}$. 
In turn, within the set of vectors of $<A>$, within the $t$ positions determined by the columns of $C$, each element of $({\mathbb F}_q)^t$ occurs $q^{m-t}$ times.
\end{proof}

\begin{corollary}
Let $K$ be a binary $m\times (n-m)$ matrix.  
Then $<[I|K]>$ is an OA$(2^m,n,2,2)$ 
 if and only if the columns of $K$ are distinct {\em and} each column of $K$ contains at least $2$ non-zero elements.
 \label{conditionsonk}
\end{corollary}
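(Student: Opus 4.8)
The plan is to derive this corollary directly from Lemma~\ref{fundam} by specializing to $q=2$, $A=[I|K]$, and $t=2$. First I would observe that $A=[I_m|K]$ is an $m\times n$ matrix (with $n-m$ columns in the $K$-block) that automatically has rank $m$, since the identity block $I_m$ already contributes $m$ linearly independent columns; hence the rank hypothesis of Lemma~\ref{fundam} is satisfied for free, and the lemma tells us that $<[I|K]>$ is an OA$(2^m,n,2,2)$ if and only if the full set of $n$ columns of $[I|K]$ is $2$-independent, i.e.\ every pair of distinct columns is linearly independent.

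Next I would translate ``every pair of columns is linearly independent'' into the two stated conditions on $K$. Over ${\mathbb F}_2$, two distinct nonzero vectors are linearly dependent precisely when they are equal (since the only nonzero scalar is $1$), and a pair involving the zero vector is always dependent. So $2$-independence of the columns of $[I|K]$ amounts to: (a) no column is zero; and (b) no two columns are equal. The columns coming from the $I_m$ block are the standard basis vectors $e_1,\dots,e_m$, which are automatically nonzero and pairwise distinct, so conditions (a) and (b) impose nothing new among those. The remaining cases are: a column of $K$ equals ${\bf 0}$ (ruled out by requiring each column of $K$ to have at least $2$ nonzero entries --- note weight exactly $1$ would make it equal to some $e_i$), a column of $K$ equals some standard basis vector $e_i$ (ruled out again by the ``at least $2$ nonzero entries'' condition), and two columns of $K$ being equal (ruled out by requiring the columns of $K$ to be distinct). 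Conversely, if the columns of $K$ are distinct and each has weight at least $2$, one checks that all three dependency scenarios are excluded, so the column set of $[I|K]$ is $2$-independent.

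I would then assemble these observations: $<[I|K]>$ is an OA$(2^m,n,2,2)$ $\iff$ the columns of $[I|K]$ are $2$-independent $\iff$ (columns of $K$ distinct) \emph{and} (each column of $K$ has at least $2$ nonzero entries), which is exactly the claimed equivalence.

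There is no serious obstacle here; the only point requiring a little care is the bookkeeping of exactly which pairs of columns could be dependent --- in particular remembering that a weight-$1$ column of $K$ coincides with a basis column from the $I$ block, so the ``at least $2$ nonzero elements'' condition is doing double duty (excluding both the zero column and the weight-one columns). I would make sure the write-up states this explicitly rather than only treating the zero column.
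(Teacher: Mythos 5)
Your proposal is correct and follows exactly the route the paper intends: the corollary is stated as an immediate consequence of Lemma~\ref{fundam}, specializing to $q=2$, $t=2$, $A=[I|K]$, and translating $2$-independence over ${\mathbb F}_2$ into the conditions that no column is zero, none has weight one (which would duplicate a column of $I$), and no two columns of $K$ coincide. Your explicit note that the ``at least $2$ non-zero elements'' condition excludes both the zero column and the weight-one columns is exactly the right bookkeeping.
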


\begin{example}
From the previous corollary, $<A>$ is an OA$(2^6,8,2,2)$:   
\begin{equation*}
		A = \left( \begin{array}{cccccc:cc}
			1 & 0 & 0 & 0 & 0 & 0 & 1 & 0 \\
 		    0 & 1 & 0 & 0 & 0 & 0 & 0 & 1 \\
 		    0 & 0 & 1 & 0 & 0 & 0 & 1 & 1 \\
 		    0 & 0 & 0 & 1 & 0 & 0 & 1 & 0 \\
 		    0 & 0 & 0 & 0 & 1 & 0 & 0 & 1 \\ 
 		    0 & 0 & 0 & 0 & 0 & 1 & 1 & 1 \\
 		 \end{array}\right) 
	\end{equation*}
\label{eggsone}
\end{example}

\begin{theorem} 
    Let $G$ be an $m\times k$ matrix  and let 
    $A$ be an $N\times k$ matrix of full rank, each over the field ${\mathbb F}_q$ where $N\leq k$. 
    Let $ A^{\perp} $ be a $k\times (k-N)$ matrix the columns of which are a basis for the nullspace of $A$.
    Then $G \ \boxplus <A>$ is an $m \times q^N$ row-column factorial design if and only if $GA^\perp$ is an OA$(m,k-N,q,k-N)$. 
\label{bigdeal1}
\end{theorem}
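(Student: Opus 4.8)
The plan is to describe the entries of $G \boxplus <A>$ as a union of cosets of a fixed subspace, and then to recognise $GA^\perp$ as exactly the object that records which cosets occur among those entries and with what multiplicities. Write $W$ for the row space of $A$ over $\mathbb{F}_q$. Since $A$ has full rank $N$ we have $\dim W = N$, so $|W| = q^N$ and the rows of $<A>$ are precisely the vectors of $W$. If $\mathbf{g}_1,\dots ,\mathbf{g}_m$ denote the rows of $G$, then by definition of $\boxplus$ the cell of $G \boxplus <A>$ in row $i$ and the column indexed by $\mathbf{w}\in W$ is $\mathbf{g}_i+\mathbf{w}$; hence the multiset $S$ of all $mq^N$ entries of $G \boxplus <A>$ is the union, counted with multiplicity, $S=\biguplus_{i=1}^m(\mathbf{g}_i+W)$ of $m$ cosets of $W$ in $\mathbb{F}_q^k$. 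Note also $\lambda=mq^N/q^k=m/q^{k-N}$, so that ``$G \boxplus <A>$ is a row-column factorial design'' means exactly that $S$ equals $\lambda\times[q]^k$.

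The next step is the coset bookkeeping via $A^\perp$. The columns of $A^\perp$ form a basis of the null space of $A$, which is the orthogonal complement $W^\perp$; since $\dim W^\perp=k-N$, the matrix $A^\perp$ is $k\times(k-N)$ of rank $k-N$, and $W=(W^\perp)^\perp=\{\mathbf{x}\in\mathbb{F}_q^k:\mathbf{x}A^\perp=\mathbf{0}\}$. Therefore, for $\mathbf{x},\mathbf{y}\in\mathbb{F}_q^k$ we have $\mathbf{x}+W=\mathbf{y}+W$ if and only if $(\mathbf{x}-\mathbf{y})A^\perp=\mathbf{0}$, that is, if and only if $\mathbf{x}A^\perp=\mathbf{y}A^\perp$. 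Consequently the rule $\mathbf{x}+W\mapsto\mathbf{x}A^\perp$ is a well-defined injection of the quotient $\mathbb{F}_q^k/W$ into $\mathbb{F}_q^{k-N}$, and since both sets have $q^{k-N}$ elements it is a bijection; in particular $\mathbb{F}_q^k$ splits into exactly $q^{k-N}$ cosets of $W$, each of size $q^N$. The $i$th row of $GA^\perp$ is $\mathbf{g}_iA^\perp$, which under this bijection is precisely the label of the coset $\mathbf{g}_i+W$.

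Finally I would convert both sides of the stated equivalence into the same counting statement. For a coset $C$ of $W$, let $n_C$ be the number of indices $i$ with $\mathbf{g}_i\in C$. Each vector of $\mathbf{g}_i+W$ occurs in $\mathbf{g}_i+W$ exactly once, so in $S$ every vector of a fixed coset $C$ occurs exactly $n_C$ times; hence $S=\lambda\times[q]^k$ --- equivalently, $G \boxplus <A>$ is a row-column factorial design --- if and only if $n_C=\lambda$ for every coset $C$ of $W$. On the other hand, using the bijection above, for each $\mathbf{z}\in\mathbb{F}_q^{k-N}$ the number of rows of $GA^\perp$ equal to $\mathbf{z}$ is $n_C$, where $C$ is the coset labelled $\mathbf{z}$; so ``$n_C=\lambda$ for every $C$'' says exactly that each vector of $[q]^{k-N}$ occurs $\lambda=m/q^{k-N}$ times among the $m$ rows of $GA^\perp$. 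Because $GA^\perp$ has exactly $k-N$ columns, its only $(k-N)$-subset of columns is the full set, so this last statement is precisely that $GA^\perp$ is an OA$(m,k-N,q,k-N)$ (and in particular it forces $q^{k-N}\mid m$, so $\lambda$ is a positive integer). Chaining the equivalences yields the theorem.

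The argument is short, and I expect the only delicate point to be the second step: keeping straight that $W=\mathrm{row}(A)$ rather than $\mathrm{null}(A)$, that $\mathbf{x}A^\perp=\mathbf{0}$ characterises membership in $W$, and then checking that the coset-to-label map is genuinely a bijection, since it is this bijection that makes the two counting conditions literally identical. Everything else is a direct unravelling of the definitions of $\boxplus$, of a row-column factorial design, and of an orthogonal array whose strength equals its degree.
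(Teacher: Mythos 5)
Your proposal is correct and follows essentially the same route as the paper's proof: decompose the entries of $G \boxplus <A>$ into the cosets $\mathbf{g}_i + W$, observe that $\mathbf{x}+W=\mathbf{y}+W$ if and only if $\mathbf{x}A^{\perp}=\mathbf{y}A^{\perp}$, and translate the factorial-design condition into the statement that every vector of $[q]^{k-N}$ occurs equally often among the rows of $GA^{\perp}$. Your explicit verification that the coset-labelling map $\mathbf{x}+W \mapsto \mathbf{x}A^{\perp}$ is a bijection onto $\mathbb{F}_q^{k-N}$ is a detail the paper leaves implicit, but it is the same argument.
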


\begin{proof}
\sloppy 
Let $M=G \ \boxplus <A>$ and let 
$ { \bf v_i} \in\mathbb{F}_q^k $ denote the $i^{th}$ row vector of $G$. Then the elements of the 
$i^{th}$ row of $M$ form the coset ${ \bf v_i} + <A>$ of $<A>$ in $\mathbb{F}_q^k$. Note that $<A>$ has exactly $q^{k-N}$ distinct cosets in $\mathbb{F}_q^k$. We show that each of these cosets appears  the same number of times as sets of entries in a row of $M$. 
To this end, observe that for $1 \leq i,j \leq m$:
    $$ { \bf v_i} + <A> = { \bf v_j} + <A>  \iff { \bf v_i} - { \bf v_j} \in <A> \iff { \bf v_i} A^\perp = { \bf v_j} A^\perp.$$ Thus the set of  entries in two rows of $M$ are identical if and only if the corresponding rows in $GA^\perp$ are identical. 
    Thus $M$ is a row-column factorial design (that is, the set of entries of $M$ form a factorial design) if and only if each element of 
    $(\mathbb{F}_q)^{k-N}$ occurs the same number of times as a row of $GA^\perp$. 
    In turn, this is true if and only if  $GA^\perp$ is an OA$(m,k-N,q,k-N)$. 
\end{proof}

\begin{theorem}  
    Let $G$ be an OA$(m,k,q,t)$ 
    and let $<A>$ be an OA$(q^N,k,q,t)$ 
    where $A$ is an $N\times k$ matrix of full rank and $N\leq k$. 
    Let $A^\perp$ be a $k\times (k-N)$ matrix whose columns generate the nullspace 
    of $A$. 
 Suppose that $GA^\perp$ is an OA$(m,k-N,q,k-N)$. 
    Then $G \ \boxplus <A>$ is an array of type $I_k(m,q^N,q,t)$.
\label{bigdeal}
\end{theorem}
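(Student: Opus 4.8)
The plan is to combine Theorem~\ref{bigdeal1} with Lemma~\ref{justcheckthefirst}. First I would observe that since $G$ is an OA$(m,k,q,t)$ with the zero vector in its first row and $<A>$ is an OA$(q^N,k,q,t)$ with the zero vector in its first row, the array $L=G\boxplus<A>$ has the property that every row is a coset of $<A>$ (hence equivalent to the first row, which is $<A>$ itself) and every column is a translate of $G$ (hence equivalent to the first column, which is $G$ itself). Consequently each row of $L$ is the row set of an OA$(q^N,k,q,t)$ and each column of $L$ is the row set of an OA$(m,k,q,t)$; so the only thing left to check is that $L$ is a genuine row-column factorial design, i.e.\ that the multiset of entries of $L$ is $\lambda\times[q]^k$ for the appropriate $\lambda$.

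For that last point I would invoke Theorem~\ref{bigdeal1} directly: with $G$ an $m\times k$ matrix and $A$ an $N\times k$ matrix of full rank over $\mathbb{F}_q$, that theorem states that $G\boxplus<A>$ is an $m\times q^N$ row-column factorial design if and only if $GA^\perp$ is an OA$(m,k-N,q,k-N)$. By hypothesis $GA^\perp$ is exactly such an orthogonal array, so $G\boxplus<A>$ is a row-column factorial design. Combining this with the strength-$t$ observations of the previous paragraph (formally, applying Lemma~\ref{justcheckthefirst} with $C=G$ and $R=<A>$), we conclude that $L=G\boxplus<A>$ is an array of type $I_k(m,q^N,q,t)$.

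There is essentially no obstacle here: the theorem is a clean corollary obtained by feeding the orthogonal-array hypotheses on $G$ and $<A>$ into the structural characterization of Theorem~\ref{bigdeal1}, with Lemma~\ref{justcheckthefirst} supplying the reduction that makes checking ``the first row and first column are orthogonal arrays of strength $t$'' sufficient. The one point worth spelling out carefully in the write-up is the bookkeeping of the index: $\lambda = mq^N/q^k = m/q^{k-N}$, which is an integer precisely because $GA^\perp$ being an OA$(m,k-N,q,k-N)$ forces $q^{k-N}\mid m$. Everything else is immediate.
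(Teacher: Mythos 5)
Your proposal is correct and follows exactly the paper's own route: Theorem~\ref{bigdeal1} supplies the factorial-design property from the hypothesis on $GA^\perp$, and Lemma~\ref{justcheckthefirst} (applied with $C=G$, $R=<A>$) reduces the strength-$t$ check to the first row and column. The extra remarks on cosets and on the index $\lambda$ are just the content already packaged in that lemma, so nothing further is needed.
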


\begin{proof}
\sloppy 
    Let $M = G \ \boxplus <A>$. 
     The result follows from Theorem \ref{bigdeal1} and Lemma \ref{justcheckthefirst}.   
    \end{proof}

In the next example (and in Section 5) we make use of the result (well-known to coding theorists) that over any 
field, the nullspace of the matrix $[I|K]$ is equal to the columnspace of the matrix $[-K^T|I]^T$ (Remark 1.5, \cite[p.~677]{Colbourn:2006:HCD:1202540}).

	\begin{example} \label{eggstwo}
	We continue with Example \ref{eggsone} to show  that $G \ \boxplus <A>$ is an $I_{8}(12,2^6,2,2)$,  using Theorem \ref{bigdeal} the following $G= \textup{OA}(12,8,2,2)$. 

	\begin{equation*}
    A^\perp = \left( \begin{array}{cc}
              
			 1 & 0 \\
 		     0 & 1 \\
 		     1 & 1 \\
 		     1 & 0 \\ 
 		     0 & 1 \\ 
 		     1 & 1 \\\hdashline
 		     1 & 0 \\
             0 & 1 \\
 		 \end{array}\right) 
 		 \quad
		G = \left( \begin{array}{cccccccc}
			0 & 0 & 0 & 0 & 0 & 0 & 0 & 0 \\
 		    0 & 0 & 0 & 1 & 1 & 1 & 0 & 1 \\
 		    1 & 0 & 0 & 0 & 1 & 1 & 1 & 0 \\
 		    0 & 1 & 0 & 0 & 0 & 1 & 1 & 1 \\
 		    1 & 0 & 1 & 0 & 0 & 0 & 1 & 1 \\ 
 		    1 & 1 & 0 & 1 & 0 & 0 & 0 & 1 \\
 		    0 & 1 & 1 & 0 & 1 & 0 & 0 & 0 \\ 
 		    1 & 0 & 1 & 1 & 0 & 1 & 0 & 0 \\
 		    1 & 1 & 0 & 1 & 1 & 0 & 1 & 0 \\
 		    1 & 1 & 1 & 0 & 1 & 1 & 0 & 1 \\
 		    0 & 1 & 1 & 1 & 0 & 1 & 1 & 0 \\
 		    0 & 0 & 1 & 1 & 1 & 0 & 1 & 1 
		\end{array}\right) 
		\quad 
		GA^\perp = \left( \begin{array}{cc}
			0 & 0 \\
			0 & 1 \\ 
			1 & 0 \\
			0 & 1 \\
			1 & 0 \\
			0 & 0 \\
			1 & 1 \\
			0 & 0 \\
			1 & 0 \\
			1 & 1 \\
			0 & 1 \\
			1 & 1 \\
 		\end{array}\right)
		\end{equation*}
	\end{example}

	The following is a generalization of Theorem 5 in \cite{rahim2021row-column}.
	
	\begin{theorem}
	Let $q$ be a prime power. 
		Let $M,N\geq 1$ and $k \leq M+N$. 
		Suppose there exists a $k\times M$ matrix $A$ and a $k\times N$ matrix $B$ such that (a) the $k\times (M+N)$ matrix $[A|B]$ has rank $k$; (b) the rows of $A$ are a $t$-independent set of vectors; and  (c) the rows of $B$ are a $t$-independent set of vectors. 
	Then there exists an abelian array of type 
		$I_{k}(q^M, q^N, q,t)$.  	
		\label{thm:FR.Polynomilas.existence.}
			\end{theorem}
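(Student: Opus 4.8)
The plan is to build the array directly as $C\boxplus R$ for two suitable orthogonal arrays and then invoke Lemma \ref{justcheckthefirst}. Write the rows of $A$ as a list $\mathbf{a}_1,\dots,\mathbf{a}_k\in\mathbb{F}_q^M$ and the rows of $B$ as $\mathbf{b}_1,\dots,\mathbf{b}_k\in\mathbb{F}_q^N$. Let $C$ be the array with $q^M$ rows indexed by $\mathbf{u}\in\mathbb{F}_q^M$ (with $\mathbf{0}$ first), the row indexed by $\mathbf{u}$ being $\mathbf{u}A^{T}=(\mathbf{u}\cdot\mathbf{a}_1,\dots,\mathbf{u}\cdot\mathbf{a}_k)$; define $R$ analogously from $B$, with $q^N$ rows indexed by $\mathbf{v}\in\mathbb{F}_q^N$. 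Both have the zero vector as their first row, so $C\boxplus R$ is defined, and by construction any array of the form $C\boxplus R$ with $C$ an $\mathrm{OA}(q^M,k,q,t)$ and $R$ an $\mathrm{OA}(q^N,k,q,t)$ is abelian.

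First I would check that $C$ is an $\mathrm{OA}(q^M,k,q,t)$. Fix any $t$ columns $i_1<\dots<i_t$; the corresponding $q^M\times t$ subarray has row $\mathbf{u}\mapsto(\mathbf{u}\cdot\mathbf{a}_{i_1},\dots,\mathbf{u}\cdot\mathbf{a}_{i_t})$. By hypothesis (b) the rows of $A$ are $t$-independent, so $\mathbf{a}_{i_1},\dots,\mathbf{a}_{i_t}$ are linearly independent; hence this linear map $\mathbb{F}_q^M\to\mathbb{F}_q^t$ is surjective with every fibre of size $q^{M-t}$, so every element of $\mathbb{F}_q^t$ occurs $q^{M-t}$ times in these $t$ positions. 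This is exactly the computation behind Lemma \ref{fundam}, which goes through verbatim here even though $A$ need not have full column rank, once one allows the orthogonal array to carry repeated rows. The identical argument, using hypothesis (c), shows $R$ is an $\mathrm{OA}(q^N,k,q,t)$.

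It then remains, by Lemma \ref{justcheckthefirst}, to show that $L:=C\boxplus R$ is a row-column factorial design, i.e. that the multiset of its $q^{M+N}$ entries equals $q^{M+N-k}\times[q]^k$. The entry of $L$ in the cell indexed by $(\mathbf{u},\mathbf{v})$ is $\mathbf{u}A^{T}+\mathbf{v}B^{T}=\mathbf{w}\,[A|B]^{T}$ with $\mathbf{w}=(\mathbf{u},\mathbf{v})\in\mathbb{F}_q^{M+N}$. As $\mathbf{w}$ ranges over $\mathbb{F}_q^{M+N}$ these entries are precisely the image of the linear map $\mathbf{w}\mapsto\mathbf{w}\,[A|B]^{T}$; since by hypothesis (a) the matrix $[A|B]$ has rank $k$, this map is onto $\mathbb{F}_q^{k}$ and every fibre has size $q^{M+N-k}$. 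So each vector of $[q]^k$ occurs exactly $q^{M+N-k}$ times, $L$ is the required factorial design, and hence $L=C\boxplus R$ is an abelian array of type $I_k(q^M,q^N,q,t)$.

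Each hypothesis is used exactly once: the $t$-independence of the rows of $A$ (resp.\ $B$) is what forces $C$ (resp.\ $R$) to have strength $t$, and the rank condition on $[A|B]$ is what forces the entries of $C\boxplus R$ to be a uniformly replicated factorial design; the condition $k\le M+N$ is what makes the replication $q^{M+N-k}$ a positive integer. There is no serious obstacle here; the one point that needs care — and the natural place an objection might be raised — is that $A$ and $B$ need not have full column rank, so $C$ and $R$ should be viewed as multiset orthogonal arrays (the images of $\mathbf{u}\mapsto\mathbf{u}A^{T}$ and $\mathbf{v}\mapsto\mathbf{v}B^{T}$) rather than the vector-space arrays $\langle A^{T}\rangle$, $\langle B^{T}\rangle$. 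This is why I would argue via Lemma \ref{justcheckthefirst} directly rather than quoting Theorem \ref{bigdeal}, whose statement is phrased in terms of a full-rank generator matrix.
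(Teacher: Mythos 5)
Your proof is correct and is essentially the paper's own argument in different notation: the paper builds the same array via the linear forms $f_r(x)=\mathbf{a}_r\cdot\mathbf{b}+\mathbf{b}_r\cdot\mathbf{c}$ (i.e.\ cell $(\mathbf{u},\mathbf{v})$ holds $\mathbf{u}A^{T}+\mathbf{v}B^{T}$) and verifies strength $t$ and the factorial property by exactly the same rank/fibre counts, merely phrased as counting solutions of linear systems rather than as fibres of linear maps. Your routing through Lemma \ref{justcheckthefirst} and your explicit remark about repeated rows when $A$ or $B$ lacks full column rank are harmless repackagings of the same computation.
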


	\begin{proof}
\sloppy 
Assuming the conditions of the theorem,  the rows of $[A|B]$ are a linearly independent set of $ k $ vectors:
		\begin{equation} \label{eq:vectors}
		(a_{r,0}+ a_{r,1}+ \cdots + a_{r,M+N-1}); \ r \in [k]
		\end{equation}
		in the vector space $ \mathbb{F}_q^{M+N} $ 
		such that:

		\begin{enumerate}
			\item[(i)]  The set of vectors $\{(a_{r,0}, ... , a_{r,M-1}): r\in [k]\}$ is $t$-independent; and   
			\item[(ii)]  the set of vectors $\{(a_{r,M}, ... , a_{r,M+N-1}): r\in [k]\}$ is $t$-independent.
		\end{enumerate}

Corresponding to each vector in (\ref{eq:vectors}) we construct a $q^{M} \times q^{N} $ array $A_r$ by using a polynomial $f_r$, where

\begin{equation*}
	f_r(x_0, ... , x_{M+N-1}) = a_{r,0}x_{0}+ \cdots + a_{r,M+N-1}x_{M+N-1}.
\end{equation*}

Label the rows and columns of $A_r$ by using the set of all $M$-tuples  and $N$-tuples, respectively, over the field  $ \mathbb{F}_q $. We place the element $ f(b_0,...,b_{M-1},c_0, ..., c_{N-1}) $ in the intersection of row $ (b_0,..., b_{M-1}) $ and column $ (c_0, ... , c_{N-1}) $ of the array $A_r$. 

We next form an array $D$ by overlapping the arrays $A_r$, $r\in [k]$. That is, cell $(i,j)$ of $D$ contains a vector of dimension $k$ the $r${th} coordinate of which is the entry of cell $(i,j)$ of $A_r$. 
We claim that the array $D$ is an $I_{k}(q^M, q^N, q,t)$.

We first show that $D$ is an array of strength $ t $. Let $ T $ be a subset of $ [k] $ of size $ t $ and consider a sequence $(\alpha_1, \dots, \alpha_t) $ in $ [q]^t $. For a fixed column in $ D $, the system of equations
\begin{equation*}
	f_r(x_0, \dots , x_{M+N-1} ) = \alpha_r; \hspace{1cm} r \in T
\end{equation*}
reduces to:
\begin{equation*}
	 a_{r,0}x_0+ a_{r,1}x_1+ \cdots + a_{r,M-1}x_{M-1} = \alpha_r + K_r; \hspace{1cm} r \in T
\end{equation*}
where $ K_r $ is a constant in $ \mathbb{F}_q $ for each $ r \in T $.

By condition (i), the above system with $ M $ variables has rank $ t $. 
Therefore it has exactly $ q^{M-t} $ solutions in $ \mathbb{F}_q $. Thus each column of $ D $ is an orthogonal array of type OA$(q^M,k,q,t) $. Similarly, we can show that each row of $ D $ also forms an orthogonal array of type OA$(q^N,k,q, t) $. Hence the strength $ t $ conditions is satisfied. 

Now to show that $D$ is a $q^k$-full factorial design. Consider a sequence $(\alpha_0, \alpha_1, \dots , \alpha_{k-1})$ in $\mathbb{F}_q^k$. Since the system of equations:
\begin{equation*}
	 a_{r,0}x_0+ a_{r,1}x_1+ \cdots + a_{r,M+N-1}x_{M+N-1} = \alpha_r, \hspace{1cm} r \in [k],
\end{equation*}
has $M+N$ variables and rank $k$, it has exactly $q^{M+N-k}$ solutions in $\mathbb{F}_q$. Thus each sequence in $\mathbb{F}_q^k$ appears exactly $q^{M+N-k}$ times in $D$.
\end{proof}

\section{Strength $2$ with arbitrary number of levels}

In this section we consider row-column factorial designs of the form $I_k(q^M,q^N,q,2)$. 
The aim of this section is to prove the following theorem.
\begin{theorem}
Let $2\leq M\leq N$, let $q$ be a prime power and let $k\geq 2$.  
Then there exists an array of type $I_k(q^M,q^N,q,2)$ if and only if $k\leq M+N$, 
$k\leq (q^M-1)/(q-1)$ and $(k,M,q)\neq (3,2,2)$. 
\label{biggerthanelvis}
\end{theorem}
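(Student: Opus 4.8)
The plan is to verify that the stated conditions are both necessary and sufficient, with the bulk of the work being the sufficiency direction via Theorem \ref{thm:FR.Polynomilas.existence.}.

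\emph{Necessity.} The condition $k \leq M+N$ is forced because the replication number $\lambda = q^M q^N / q^k = q^{M+N-k}$ must be a positive integer, so $k \leq M+N$; equivalently, the existence of an OA$(q^M,k,q,2)$ and OA$(q^N,k,q,2)$ with $q^k \mid q^{M+N}$ is part of admissibility (Lemma \ref{triviality}). The bound $k \leq (q^M-1)/(q-1)$ is exactly the Bose--Bush statement of Lemma \ref{2strong} applied with $m = q^M \leq q^N = n$. The final exceptional case $(k,M,q) \neq (3,2,2)$ follows from Lemma \ref{2strong}: if $I_3(4,n,2,2)$ existed then $k = 3 = m-1$ with $m = 4$, forcing a Hadamard matrix of order $4$ to exist --- which does exist, so this argument alone is insufficient, and instead one must invoke the known fact (cf.\ Theorem \ref{thm:prevresult}'s exceptions or a direct small-case check) that $I_3(4,n,2,2)$ does not exist; I would cite or reproduce the short counting argument that an arrangement of $\lambda \times [2]^3$ into a $4 \times n$ array with every row and column an OA$(\cdot,3,2,2)$ is impossible because each row and each column would have to be (up to equivalence) the even-weight or odd-weight code of length $3$, and these cannot be reconciled into a full factorial design --- this is the one genuinely delicate necessity check.

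\emph{Sufficiency.} Assume $k \leq M+N$, $k \leq (q^M-1)/(q-1)$, and $(k,M,q) \neq (3,2,2)$. By Theorem \ref{thm:FR.Polynomilas.existence.}, it suffices to produce a $k \times M$ matrix $A$ and a $k \times N$ matrix $B$ over $\mathbb{F}_q$ such that $[A \mid B]$ has rank $k$, the rows of $A$ are $2$-independent, and the rows of $B$ are $2$-independent. Being a $2$-independent set of row vectors means no row is zero and no two rows are scalar multiples of one another --- equivalently, the rows, viewed as points of the projective space $\mathrm{PG}(M-1,q)$ (resp.\ $\mathrm{PG}(N-1,q)$), are well-defined and pairwise distinct. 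Since $\mathrm{PG}(M-1,q)$ has $(q^M-1)/(q-1) \geq k$ points, we can certainly choose $k$ distinct projective points for the rows of $A$; the columns of such an $A$ then form the $2$-independent column set needed for $\langle A\rangle$ to be an OA$(q^M,k,q,2)$ via Lemma \ref{fundam}. The real task is the simultaneous constraint: we need the rows of $B$ also $2$-independent \emph{and} the combined matrix $[A\mid B]$ to have full rank $k$. My approach: first pick $A$ to be (the transpose of) a parity-check-style matrix whose columns are $M$ of the coordinate directions together with enough further vectors, arranged so that $A$ itself already has rank $\min(k,M)$; then choose $B$ to supply the remaining $\max(0, k-M)$ dimensions while keeping its own rows $2$-independent. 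Concretely, when $k \leq M$ we may take $B$ to be any $k \times N$ matrix with $2$-independent rows and $A$ of rank $k$ (possible since $k \le M$ and $k \le (q^M-1)/(q-1)$), and $[A\mid B]$ automatically has rank $k$; when $k > M$ we need $B$ to have rank at least $k-M$ and to be "generic" enough relative to $A$. I would handle this by an explicit construction: index the $k$ rows by $k$ distinct points $P_1,\dots,P_k$ of $\mathrm{PG}(M-1,q)$ for $A$, and by $k$ distinct points $Q_1,\dots,Q_k$ of $\mathrm{PG}(N-1,q)$ for $B$ chosen so that the lifts assemble to rank $k$ --- e.g.\ take the first $M$ of the $Q_i$ to be a basis-completing set. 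A clean way to guarantee full rank is: arrange that among the $k$ rows of $[A\mid B]$, some $M$ of them already span via their $A$-parts a space of dimension $M$ (using coordinate vectors), and the remaining $k-M$ rows have $B$-parts that are linearly independent modulo the span of the first $M$ rows; since $N \geq M \geq 2$ and $k - M \leq N$ (because $k \le M+N$), and since we have $(q^N-1)/(q-1)$ projective points available for the $B$-rows, there is enough room --- provided $k \le (q^N-1)/(q-1)$, which holds since $N \ge M$.

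\emph{Main obstacle.} The hard part is the single exceptional case $(k,M,q)=(3,2,2)$: here $(q^M-1)/(q-1) = 3 = k$, so the Bose--Bush bound is met with equality and $\mathrm{PG}(1,2)$ has exactly $3$ points, so the rows of $A$ are \emph{forced} to be the three nonzero vectors of $\mathbb{F}_2^2$; one then checks these three rows sum to zero, which interacts badly with the rank and factorial-design requirement and makes the construction fail --- this is consistent with the claimed non-existence. More generally, the tension between "meet Bose--Bush with equality" (forcing $A$'s rows to be \emph{all} projective points, hence with prescribed linear dependencies) and "achieve rank $k$ for $[A\mid B]$" is where care is needed; I expect that for all cases other than $(3,2,2)$ there is enough slack (either $k < (q^M-1)/(q-1)$ strictly, or $M$ and $N$ are large enough that the forced dependencies in $A$ can be compensated by free choices in $B$) to complete the construction, and the proof will isolate and dispatch the equality case $k = (q^M-1)/(q-1)$ separately, using the explicit structure of the full projective point set (a simplex code) and the freedom in choosing $B$ over the larger space $\mathbb{F}_q^N$ with $N \ge M$.
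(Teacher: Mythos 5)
Your overall strategy---necessity from Lemmas \ref{triviality} and \ref{2strong} plus a separate treatment of the exceptional triple, and sufficiency via Theorem \ref{thm:FR.Polynomilas.existence.} by producing a $k\times M$ matrix $A$ and a $k\times N$ matrix $B$ with $2$-independent rows and $[A|B]$ of full rank---is exactly the paper's. However, the two places where you defer the work are precisely where the paper has to work, and your sketches there do not yet close the gaps.

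On the construction: the paper builds a single square matrix $L=[A|B]$ of order $M+N$ (then gets smaller $k$ from Lemma \ref{subarrays} and handles $M+N>(q^M-1)/(q-1)$ by shrinking $N$ and re-inflating with Corollary \ref{cor:blowup}), using the block form with $I_M\,|\,I_M\,|\,\mathbf{O}$ on top, $(C_M-I_M)\,|\,C_M\,|\,\mathbf{O}$ in the middle and $S\,|\,\mathbf{O}\,|\,I_{N-M}$ at the bottom; full rank follows from an explicit column reduction, and the matrix $C_M$ must be chosen case by case ($M\geq 4$; $M\in\{2,3\}$ with $q$ odd; $M=3$ with $q=2^l$, $l\geq 2$; and an ad hoc $7\times 7$ matrix for $M=3$, $q=2$, where $\mathrm{PG}(2,2)$ has only $7$ points and $M+N\leq 7$ leaves essentially no slack). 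Your argument that ``there is enough room since $k\leq (q^N-1)/(q-1)$'' counts available projective points, but the difficulty is not the count: it is meeting simultaneously (i) rows of $A$ pairwise non-parallel, (ii) rows of $B$ pairwise non-parallel, and (iii) $\operatorname{rank}[A|B]=k$, in the extremal case $k=M+N$ where every column of $B$ is already committed. Your sketch specifies the $B$-parts of the last $k-M$ rows but not those of the first $M$ rows, and it is exactly the interaction between the two halves that forces the paper's case analysis; as written this step is a plausibility claim, not a proof.

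On the exception: your necessity sketch asserts that every row of a putative $I_3(4,n,2,2)$ is the even- or odd-weight code, but that holds only for the columns (length $4$), not for rows of length $n>4$. The paper's Lemma \ref{thm:exception1} instead argues by counting: the columns split into $2b$ of even type and $2b$ of odd type, while any single row contains $4x$ even-type entries where $x$ is the number of zeros in the third coordinate among the $b$ columns whose first two coordinates are $00$; since $b$ is odd, $x\neq b/2$, so $4x\neq 2b$, a contradiction. The oddness of $b$ is essential. Indeed, pushing your own ``main obstacle'' computation one step further shows that for $(k,M,N,q)=(3,2,3,2)$ one can take the rows of $B$ to be $e_1,e_2,e_3\in\mathbb{F}_2^3$, making $[A|B]$ of rank $3$ and yielding an $I_3(4,8,2,2)$; so the exception genuinely bites only when the second dimension is $4b$ with $b$ odd (here $N=2$), and a necessity argument that does not isolate this parity cannot succeed. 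Your proposal neither restricts to that case nor supplies the counting step that makes it work.
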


Lemma \ref{products} and the previous theorem imply the following corollary. 

\begin{corollary}
Let $q_1\leq q_2\leq \dots \leq q_{\alpha}$ be powers of distinct primes and $q_1\neq 4$. 
Let $q=q_1q_2\dots q_{\alpha}$, $k\leq (q_1^M-1)/(q-1)$, $2\leq M\leq N$ and $2\leq k\leq M+N$. 
Then there exists an array of type $I_k(q^M,q^N,q,2)$. 
\end{corollary}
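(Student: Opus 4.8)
The plan is to build the array as an iterated Kronecker product of prime‑power blocks supplied by Theorem~\ref{biggerthanelvis} and combined through Lemma~\ref{products}. For each $i\in\{1,\dots,\alpha\}$ I would first produce an array $D_i$ of type $I_k(q_i^{M},q_i^{N},q_i,2)$; since $q^{M}=\prod_{i}q_i^{M}$, $q^{N}=\prod_{i}q_i^{N}$ and $q=\prod_{i}q_i$, repeated application of Lemma~\ref{products} then shows that $D_1\boxtimes D_2\boxtimes\cdots\boxtimes D_{\alpha}$ is of type $I_k(q^{M},q^{N},q,2)$, which is exactly what is asked for. (When $\alpha=1$ there is nothing to combine and $D=D_1$.) Thus the whole proof reduces to verifying that the hypotheses of Theorem~\ref{biggerthanelvis} hold for each triple $(k,M,q_i)$.

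The easy conditions come for free: each $q_i$ is a prime power, and $2\le M\le N$, $k\ge 2$, $k\le M+N$ are all assumed and none depends on $i$. For the Bose--Bush-type bound, I would use that for fixed $M\ge 2$ the map $x\mapsto(x^{M}-1)/(x-1)=1+x+\cdots+x^{M-1}$ is strictly increasing in $x$, together with $q\ge q_i\ge q_1$, to chain
\[
k\ \le\ \frac{q_1^{M}-1}{q-1}\ \le\ \frac{q_1^{M}-1}{q_1-1}\ \le\ \frac{q_i^{M}-1}{q_i-1},
\]
where the first inequality is the hypothesis on $k$, the second uses $q-1\ge q_1-1>0$, and the third uses $q_1\le q_i$. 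Hence $k\le(q_i^{M}-1)/(q_i-1)$ for every $i$, as required.

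The one hypothesis needing genuine attention --- and the only real obstacle --- is the exceptional triple: I must check that $(k,M,q_i)\ne(3,2,2)$ for all $i$. I would split on $\alpha$. If $\alpha\ge 2$, then $q\ge q_1q_2\ge 6$, so a would-be bad factor (which forces $q_i=2$, hence $q_1=2$ as $2$ is the least prime power, and $M=2$) would give $k\le(q_1^{M}-1)/(q-1)=3/(q-1)<1$, contradicting $k\ge 2$; so the bad triple cannot arise. If $\alpha=1$, then $q=q_1$ and the assertion is precisely Theorem~\ref{biggerthanelvis} for $q=q_1$, the exceptional triple being excluded by the hypothesis on $q_1$. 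Granting this for every $i$, Theorem~\ref{biggerthanelvis} yields the blocks $D_i$ and the Kronecker product of the first paragraph completes the argument; the remaining bookkeeping --- the indices in the iterated product and the elementary monotonicity estimate --- is entirely routine.
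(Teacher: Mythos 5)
Your construction---invoke Theorem~\ref{biggerthanelvis} for each prime-power factor $q_i$ to get $I_k(q_i^M,q_i^N,q_i,2)$ and glue the results with the Kronecker product of Lemma~\ref{products}---is exactly the paper's proof (which the paper dispatches in one sentence), and your verification of the Bose--Bush-type bound for each factor via $k\le (q_1^M-1)/(q-1)\le (q_1^M-1)/(q_1-1)\le (q_i^M-1)/(q_i-1)$ is correct, as is your observation that for $\alpha\ge 2$ the exceptional triple cannot arise because it would force $q_1=2$, $M=2$ and hence $k\le 3/(q-1)<2$. The one flaw is your claim, in the case $\alpha=1$, that the exceptional triple is ``excluded by the hypothesis on $q_1$'': the stated hypothesis is $q_1\neq 4$, which does \emph{not} rule out $q_1=2$, and with $\alpha=1$, $q=q_1=2$, $M=2$, $k=3$ every hypothesis of the corollary is satisfied while Theorem~\ref{biggerthanelvis} asserts nonexistence. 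This is really a defect in the corollary as printed (the needed exclusion is $(k,M,q)\neq(3,2,2)$, which---as your own computation shows---is automatic whenever $\alpha\ge 2$, i.e.\ whenever $q$ is not itself a prime power), rather than a defect in your method; but as written your sentence asserting that $q_1\neq 4$ handles the $\alpha=1$ case is false and should be replaced by an explicit appeal to (or restatement of) the excluded triple.
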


The elements of $[2]^2$ form the rows of an OA$(4,2,2,2)$. In turn, 
Lemma \ref{letsgettrivial} implies the existence of $I_2(4,4,2,2)$. 
This observation, together with the following two lemmas and Theorem \ref{thm:FR.Polynomilas.existence.}, imply the above theorem. 
			
\begin{lemma}
	Let $ N\geq M \geq 2$ be integers and $q\geq 2$ be a prime power, with $ M+N \leq (q^M-1)/(q-1)$ and $ (M,q)\neq (2,2) $. 
		Then there exists an $(M+N)\times M$ matrix $A$ and an $(M+N)\times N$ matrix $B$ such that (a) $[A|B]$ has full rank; (b) 
		no two rows of $A$ are parallel; and (c) no two rows of $B$ are parallel. 
\end{lemma}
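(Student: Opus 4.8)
The plan is to exhibit a single ``Singer cycle'' construction that handles every admissible triple $(M,N,q)$ at once. Write $n_M=(q^M-1)/(q-1)$ and $n_N=(q^N-1)/(q-1)$, and note that the hypotheses give $M+N\le n_M\le n_N$. Fix an $\mathbb{F}_q$-basis of $\mathbb{F}_{q^M}$ and one of $\mathbb{F}_{q^N}$, so that elements of these fields are also vectors in $\mathbb{F}_q^M$, respectively $\mathbb{F}_q^N$. Let $\theta$ generate the cyclic group $\mathbb{F}_{q^M}^{*}$ and let $\psi$ generate $\mathbb{F}_{q^N}^{*}$, with the single stipulation that $\psi=\theta^{-1}$ when $M=N$. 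Take $A$ to be the $(M+N)\times M$ matrix whose $i$-th row represents $\theta^{i}$ and $B$ the $(M+N)\times N$ matrix whose $i$-th row represents $\psi^{i}$, for $i=0,1,\dots,M+N-1$.

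For (b) and (c): two rows of $A$ coming from $\theta^{i}$ and $\theta^{j}$ with $i\ne j$ are parallel over $\mathbb{F}_q$ if and only if $\theta^{i-j}\in\mathbb{F}_q^{*}$; since $\mathbb{F}_q^{*}$ is the unique subgroup of $\mathbb{F}_{q^M}^{*}=\langle\theta\rangle$ of order $q-1$, namely $\langle\theta^{\,n_M}\rangle$, this happens precisely when $n_M\mid(i-j)$, which is impossible as $0<|i-j|\le M+N-1<n_M$. The identical argument, now using $n_N\ge n_M>M+N-1$, shows no two rows of $B$ are parallel; in particular every row of $A$ and of $B$ is nonzero.

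For (a): suppose $\sum_{i=0}^{M+N-1}c_i v_i=\mathbf{0}$ with each $c_i\in\mathbb{F}_q$, where $v_i$ denotes the $i$-th row of $[A\,|\,B]$. Reading off the two blocks, the polynomial $p(x)=\sum_{i=0}^{M+N-1}c_i x^{i}\in\mathbb{F}_q[x]$, of degree at most $M+N-1$, satisfies $p(\theta)=0$ and $p(\psi)=0$, so the minimal polynomials $f_\theta$ and $f_\psi$ of $\theta$ and $\psi$ over $\mathbb{F}_q$ both divide $p$. A generator of $\mathbb{F}_{q^M}^{*}$ lies in no proper subfield, whence $\deg f_\theta=M$, and similarly $\deg f_\psi=N$. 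Moreover $f_\theta\ne f_\psi$: if $M<N$ this is immediate from the degrees, while if $M=N$ and $f_{\theta^{-1}}=f_\theta$ then $\theta^{-1}$ is some Galois conjugate $\theta^{q^{j}}$ of $\theta$ with $0\le j<M$, forcing $\theta^{\,q^{j}+1}=1$ and hence $(q^M-1)\mid(q^{j}+1)$; but $0<q^{j}+1\le q^{M-1}+1<q^M-1$ whenever $(M,q)\ne(2,2)$, a contradiction. As $f_\theta$ and $f_\psi$ are distinct monic irreducibles they are coprime, so $f_\theta f_\psi\mid p$; since $\deg(f_\theta f_\psi)=M+N$ exceeds $\deg p$, we conclude $p=0$, i.e.\ every $c_i=0$. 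Hence the rows of $[A\,|\,B]$ are linearly independent and $[A\,|\,B]$ has full rank $M+N$.

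I expect the one place needing attention to be the case $M=N$: there the block $B$ must be built from a generator of $\mathbb{F}_{q^M}^{*}$ whose minimal polynomial differs from that of $\theta$, and the choice $\psi=\theta^{-1}$ secures this precisely because of the hypothesis $(M,q)\ne(2,2)$.
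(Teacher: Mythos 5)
Your construction is correct, and it is genuinely different from the one in the paper. The paper proceeds by explicit block matrices: it takes $L=[A|B]$ built from identity blocks together with a matrix $C_M$ (and a filler block $S$ of pairwise non-parallel vectors), reduces $L$ to lower-triangular form by column operations to get full rank, and then splits into four cases ($M=3,q=2$ handled by an ad hoc $7\times 7$ matrix; $M\ge 4$; $M\in\{2,3\}$ with $q$ odd; $M=3$ with $q=2^l$) to choose a suitable $C_M$. Your Singer-cycle construction --- rows of $A$ the coordinate vectors of $1,\theta,\dots,\theta^{M+N-1}$ for a primitive $\theta\in\mathbb{F}_{q^M}$, and similarly for $B$ with $\psi$ primitive in $\mathbb{F}_{q^N}$ (taking $\psi=\theta^{-1}$ when $M=N$) --- is uniform across all parameters. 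The non-parallel conditions follow cleanly from $\mathbb{F}_q^*=\langle\theta^{n_M}\rangle$ together with $M+N\le n_M\le n_N$, which is exactly where the arithmetic hypothesis enters; and full rank follows from the coprimality of the minimal polynomials $f_\theta$, $f_\psi$ and a degree count, with the exclusion $(M,q)\ne(2,2)$ used precisely to rule out $f_{\theta^{-1}}=f_\theta$ when $M=N$ (indeed in $\mathbb{F}_4$ one has $\theta^{-1}=\theta^q$, so the construction genuinely fails there). I checked the divisibility estimate $q^j+1\le q^{M-1}+1<q^M-1$ for $(M,q)\ne(2,2)$ and the rest of the details; they hold. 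What your approach buys is the elimination of all case analysis and a conceptual reason the bound $M+N\le(q^M-1)/(q-1)$ appears; what the paper's approach buys is completely explicit $0$--$1$ (or small-entry) matrices that are easy to write down and verify by hand.
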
	
	
\begin{proof}
We split the proof in different cases. In each case we describe a square matrix $L =[A|B]$ with the required properties.  

\textbf{Case I:} \hspace{0.5cm} When $ M = 3 $ and $ q = 2 $.

In this case, $3 \leq N \leq 4$. For $N = 4$ we define the matrix $ L $ to be,

\begin{equation} \label{eq:matrix.L.particular}
	L = \left(\begin{array}{ccc|ccc:c}
		1 & 0 & 0 & 0 & 1 & 1 & 0 \\
		0 & 1 & 0 & 1 & 1 & 0 & 0 \\
		0 & 0 & 1 & 1 & 1 & 1 & 0 \\
		1 & 1 & 0 & 1 & 0 & 0 & 0 \\
		0 & 1 & 1 & 0 & 1 & 0 & 0 \\
		1 & 0 & 1 & 0 & 0 & 1 & 0 \\ \hdashline
		1 & 1 & 1 & 0 & 0 & 0 & 1 \\ 		
	\end{array} \right)
\end{equation}

\vspace{2mm}
The above matrix has full rank over $ \mathbb{F}_2 $ and also satisfies the conditions (b) and (c). Now for the case $ N=3 $, we can take the $ 6 \times 6 $ sub-matrix (as shown in (\ref{eq:matrix.L.particular})) of the above matrix obtained by deleting the final row and column. Observe that this matrix also has full rank and satisfies the conditions in the lemma.
This completes Case I. 

For all other cases, we take  $ L $ to be of the form:

\begin{equation} \label{eq:matrixx.L}
		L = \left( \renewcommand\arraystretch{1.8}\begin{tabular}{>{$} P{2.0cm} <{$} | >{$} P{2.0cm} <{$} : >{$} P{2.0cm} <{$} }
			I_M		 & I_M & \textbf{O} \\ \hdashline
			C_M-I_M  & C_M & \textbf{O} \\ \hdashline 
			S & \textbf{O} & I_{N-M} \\
		\end{tabular} \right)
\end{equation}

\vspace{5mm}

Where $ I_M $ is an identity matrix of order $ M $ and 
$ \textbf{O} $ is a matrix of zeroes of appropriate size. Matrices $ C_M $ and $ S $ are to be defined later.

Label the columns of the matrix $ L $ in (\ref{eq:matrixx.L}) by $ c_i, \ i \in [M+N] $. Note that for any choices of matrices $ C_M $ and $ S $, the column operations;
\begin{equation}
	 c_{M+i} - c_i \longrightarrow c_{M+i}, \hspace{3mm} \text{for each} \hspace{3mm} i \in [M], 
 \end{equation}
   transfers the matrix $ L $ into a lower triangular matrix with entry $ 1 $ on the main diagonal. 
Thus condition (a) is satisfied for any choice of $C_m$ and $S$. 

\textbf{Case II:} \hspace{0.5cm} When $ M \geq 4 $.

 \sloppy In this case let $ C_M $ be a $M\times M$ matrix with exactly one $0$ in each row and column, $1$'s on the main diagonal and $1$'s in every other cell. Observe that condition (c) is satisfied. 
 Let $ D $ be the set of all rows in $ I_M $ and $ C_M - I_M$. It is easy to see that no two vectors in $ D $ are parallel. Let $ W $ be the largest set of non-parallel vectors in $ \mathbb{F}_q^M $ containing $ D $. Then  $ \vert W \vert = (q^M-1)/(q-1)$. We define $ S $ to be the matrix for which each row is a distinct element in $ W \setminus D $. 
 Thus condition (b) is satisfied. This completes Case II. 

In the remaining cases the matrix $ S $ can be obtained in the similar manner from the corresponding  $ C_M $, again satisfying condition (b).

\textbf{Case III:} \hspace{0.5cm} When $ 2 \leq M \leq 3 $ and $q$ is odd.

 In this case we define the matrices $ C_M, M\in \{2,3\} $ to be: 
 
\begin{equation*}
C_2 = \left(\begin{array}{cc}
	2 & 2 \\
	1 & 2 \\	
	\end{array} \right) \hspace{5mm} \text{and} 
\hspace{5mm} 
C_3 = \left(\begin{array}{ccc}
	2 & 0 & 1 \\
	1 & 2 & 0 \\	
	0 & 1 & 2 \\
\end{array} \right).
\end{equation*}

\vspace{2mm}
 \textbf{Case IV:} \hspace{0.5cm} When $ M = 3 $ and $ q = 2^l, \ l \geq 2 $.
 
 In this case we define $C_3 $ as follows. 
 
 \begin{equation*}
 	C_3 = \left(\begin{array}{ccc}
 		\alpha + 1 & 1 & 1 \\
 		1 & \alpha + 1 & 1 \\	
 		1 & 1 & \alpha + 1 \\
 	\end{array} \right),
 \end{equation*}

\vspace{2mm}
 where $ \alpha $ is a primitive element of the field $ \mathbb{F}_q $.
\end{proof}

	\begin{lemma}
		If $b$ is odd, there does not exist an array  of type $I_{3}(4, 4b, 2,2)$. 
		\label{thm:exception1}
	\end{lemma}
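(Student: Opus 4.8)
The plan is to exploit the rigidity of the column condition. A column of an array of type $I_3(4,4b,2,2)$ is a list of four vectors of $\mathbb{F}_2^3$ whose set is the set of rows of an $\mathrm{OA}(4,3,2,2)$, so the first step is to classify such orthogonal arrays. The four vectors must be distinct, since a repeated vector would force a repeated pattern in some pair of coordinates; being distinct and summing to zero in each coordinate (strength $1$), they form a coset $a+W$ of a $2$-dimensional subspace $W\leq\mathbb{F}_2^3$. The strength-$2$ requirement says that the projection of $a+W$ onto each pair of coordinates is a bijection onto $\mathbb{F}_2^2$, i.e.\ $W$ avoids each $\langle e_\ell\rangle$; writing $W=\langle w\rangle^{\perp}$, this forces $w=(1,1,1)$, so $W$ is the even-weight subspace $U=\{000,011,101,110\}$. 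Hence, \emph{as a set}, every column equals either $U$ or the unique other coset $U+001=\{001,010,100,111\}$; call the column \emph{even} or \emph{odd}, and record its type by $\epsilon_c\in\{0,1\}$. The crucial consequence is that $v_1+v_2+v_3=\epsilon_c$ in every cell $(r,c)$, so within a column the third coordinate is a fixed affine function of the first two.

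Since the replication number is $\lambda=4\cdot 4b/2^3=2b$ and $000$ occurs exactly once in each even column and never in an odd column, there are precisely $2b$ even columns and $2b$ odd columns. I would then fix a row $r$ and, for $(p,u)\in\mathbb{F}_2^2$, let $A_r(p,u)$ and $B_r(p,u)$ count the even, respectively odd, columns $c$ with $(v_1(r,c),v_2(r,c))=(p,u)$. Strength $2$ of row $r$ on each of the three coordinate pairs $\{1,2\}$, $\{1,3\}$, $\{2,3\}$, combined with $v_3=v_1+v_2+\epsilon_c$, gives the relations $A_r(p,u)+B_r(p,u)=b$, $A_r(p,u)+B_r(p,u+1)=b$, and $A_r(w,t)+B_r(w+1,t)=b$ for all arguments. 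The first two show $B_r$ (hence $A_r$) does not depend on its second argument; the third then removes the dependence on the first, so $A_r$ and $B_r$ are constant on $\mathbb{F}_2^2$. Counting even columns through $\sum_{(p,u)}A_r(p,u)=2b$ pins down this common value: $A_r\equiv B_r\equiv b/2$.

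Since $B_r(0,0)=b/2$ must be a non-negative integer, no such array exists when $b$ is odd, which is the claim.

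The main obstacle is really the first step: proving the classification of $\mathrm{OA}(4,3,2,2)$ and distilling from it the parity invariant $v_1+v_2+v_3=\epsilon_c$. Once that invariant is available the rest is elementary bookkeeping, but one subtlety deserves attention: the coordinate pairs $\{1,2\}$ and $\{1,3\}$ alone only prove that $A_r$ and $B_r$ are independent of their second argument, and the pair $\{2,3\}$ is genuinely needed to collapse them to constants and reach the non-integrality contradiction.
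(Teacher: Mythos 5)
Your proof is correct and follows essentially the same route as the paper: both classify each column as the even-weight or odd-weight coset of $\mathbb{F}_2^3$, count $2b$ columns of each type, and then use the strength-$2$ condition on a row to force $b/2$ columns of a given type over each pattern in the first two coordinates, which is non-integral for odd $b$. Your version is a more systematic rendering (explicit coset classification and counting functions $A_r,B_r$) of the paper's ``by inspection'' classification and its single parameter $x$, but the underlying argument is the same.
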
	
\begin{proof}
Suppose that an array $D$ exists of type $I_{3}(4, 4b, 2,2)$. 
 Then each column of $D$ is an OA$(4,3,2,2)$. By inspection, 
the vectors in any column of $D$ are either all the vectors of even weight or all 
the vectors of odd weight; we refer to these columns as type A or B, respectively. 
	
	\begin{table}[H]
	\centering
	\begin{tabular}{|c|}
		\multicolumn{1}{c}{A} \\ \hline
	000 \\
	101 \\
	011 \\
	110 \\	\hline
	\end{tabular}
	\begin{tabular}{ccc}
	 &&\\
	 &&\\
	 &&\\
	 &&\\	
	\end{tabular}
	\begin{tabular}{|c|}
	\multicolumn{1}{c}{B} \\	\hline
	001 \\
	100 \\
	010 \\
	111 \\	\hline
	\end{tabular}
	\end{table}

Since the vectors in $ A $ and $ B $ form a partition of $ \mathbb{F}_2^3 $ and the entries of $D$ form a factorial design, $D$ must contain exactly $ 2b $ columns of each type. 

Now consider a row $ R $ in $D$; without any loss of generality we may assume that the first two coordinates of $ R $ have the following form:

$$ 
\overbrace{00 \ \ \ 00 \ \ \ \dots  \ \ \ 00  }^{B1} \ | \ \overbrace{01 \ \ \ 01 \ \ \ \dots  \ \ \ 01  }^{B2} \ | \ \overbrace{10 \ \ \ 10 \ \ \ \dots  \ \ \ 10  }^{B3} \ | \ \overbrace{11 \ \ \ 11 \ \ \ \dots  \ \ \ 11  }^{B4}
$$

where each $ Bi$ has size $b$. 
Let $x$ be the number of zeros at the third coordinate in $B1$, then without loss of generality $x \geq (b+1)/2$. By strength two property, the number of zeros in the third coordinate in $B2, B3$ and $B4$ is $ b-x, b-x$ and $x$ respectively. This implies that there are $x$ vectors of type A in each $Bi$. Consequently, $R$ contains $4x \geq 2b+2$ vectors of type A. This is a contradiction since $D$ contains exactly $2b$ columns of each type.
\end{proof}

\section{Binary row-column factorial designs of strength $2$} 

In this section we restrict ourselves to the binary case. 
We exploit the theory developed in Section 3 to give existence results for arrays of the form $I_k(4m,n,2,2)$. We focus on the case where $m$ is odd, however the next theorem is also true when $m$ is even. 
The main results in this section are given in 
 Theorems \ref{biggercases}, \ref{thisisalabel} and \ref{section5main}.  
	
\begin{theorem}
Let $k\geq 5$. 
Let $m\geq 3$ be odd and suppose there exists 
an OA$(4m,k,2,2)$ with 
two subsets of column vectors $V$ and $W$ such that:
\begin{itemize}
\item $|V|,|W|\geq 3$;
\item there exists ${\bf v}\in V\setminus W$ and ${\bf w}\in W\setminus V$ such that 
$V\setminus \{{\bf v}\}\neq W\setminus \{{\bf w}\}$; 
\item $(\sum_{{\bf x}\in V} {\bf x})$ is orthogonal to 
$(\sum_{{\bf y}\in W} {\bf y})$. 
\end{itemize}
 Then
there exists an abelian  $I_k(4m,2^{k-2},2,2)$. 
\label{eightisenough}
\end{theorem}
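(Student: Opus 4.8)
The natural strategy is to build the row-column factorial design via Theorem~\ref{bigdeal}, taking $G$ to be the given $\textup{OA}(4m,k,2,2)$ and choosing a linear orthogonal array $<A>$ of the form $<[I\,|\,K]>$ with $A$ an $(k-2)\times k$ matrix (so $N=k-2$, and the target number of columns is $2^{N}=2^{k-2}$). The one hypothesis we must arrange is that $GA^{\perp}$ is an $\textup{OA}(m,k-N,q,k-N)=\textup{OA}(4m,2,2,2)$; equivalently, writing $A^{\perp}$ as a $k\times 2$ matrix whose two columns ${\bf a}_1,{\bf a}_2$ span the nullspace of $A$, we need the two binary vectors $G{\bf a}_1$ and $G{\bf a}_2$ of length $4m$ to be orthogonal in the sense defined in the excerpt (each of weight $2m$, inner product $m$). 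Since $G$ has strength $2$, each $G{\bf a}_i$ automatically has weight $2m$ provided ${\bf a}_i$ is the support-indicator of a set of columns of $G$; so the real content is to produce ${\bf a}_1,{\bf a}_2$ in the nullspace of a valid $A$ whose images under $G$ are orthogonal.

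Here is where the three hypotheses on $V$ and $W$ enter. Set ${\bf a}_1=\sum_{{\bf x}\in V}{\bf x}$ and ${\bf a}_2=\sum_{{\bf y}\in W}{\bf y}$, viewed as $0/1$ coefficient vectors in $\mathbb{F}_2^{k}$ recording which columns of $G$ are summed; then $G{\bf a}_1$ and $G{\bf a}_2$ are precisely the column-sums $\sum_{{\bf v}\in V}{\bf v}$ and $\sum_{{\bf w}\in W}{\bf w}$, which are orthogonal by the third bullet. The first bullet ($|V|,|W|\ge 3$) guarantees each ${\bf a}_i$ has at least three nonzero entries, and in particular is nonzero; together with the second bullet (there exist ${\bf v}\in V\setminus W$, ${\bf w}\in W\setminus V$ with $V\setminus\{{\bf v}\}\neq W\setminus\{{\bf w}\}$) this forces ${\bf a}_1$ and ${\bf a}_2$ to be linearly independent over $\mathbb{F}_2$ (they are not equal, and neither is zero, so the only dependency to rule out is ${\bf a}_1={\bf a}_2$, which the second bullet blocks). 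Thus $\{{\bf a}_1,{\bf a}_2\}$ spans a $2$-dimensional subspace $U\subseteq\mathbb{F}_2^k$; take $A^{\perp}$ to be the $k\times 2$ matrix with columns ${\bf a}_1,{\bf a}_2$, and let $A$ be any $(k-2)\times k$ matrix of full rank whose rowspace is $U^{\perp}$ (equivalently, whose nullspace is $U$; such $A$ exists, and one can normalize to $[I\,|\,K]$ after a column permutation).

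Two things then remain to check. First, $<A>$ must actually be an $\textup{OA}(2^{k-2},k,2,2)$: by Corollary~\ref{conditionsonk} this holds iff the columns of $K$ are distinct and each has weight $\ge 2$, equivalently iff the columns of the $2\times k$ matrix $(A^{\perp})^{T}$ are distinct and no column is zero — i.e., iff no two coordinates $i,j\in[k]$ satisfy $({\bf a}_1)_i=({\bf a}_1)_j$ and $({\bf a}_2)_i=({\bf a}_2)_j$ jointly with the pattern $(0,0)$, and more precisely that the map $i\mapsto(({\bf a}_1)_i,({\bf a}_2)_i)$ omits $(0,0)$ as a value and is injective. This injectivity need not hold in general, so the honest step is: if it fails, pass instead to a suitable full-rank $A$ whose nullspace still contains $U$ but is chosen (using the slack $k\le 2^{k-2}$, which follows from $k\ge 5$) so that the OA condition holds — or more cleanly, invoke the freedom in choosing the complement, noting that any $2$-dimensional $U$ with a basis of weight-$\ge 3$ vectors that are ``non-degenerate'' yields a valid linear OA; this bookkeeping is the one genuinely fiddly point. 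Second, once $<A>$ is a strength-$2$ linear OA and $GA^{\perp}=[\,G{\bf a}_1\ |\ G{\bf a}_2\,]$ is an $\textup{OA}(4m,2,2,2)$ (guaranteed by orthogonality of the column-sums), Theorem~\ref{bigdeal} immediately gives that $G\boxplus<A>$ is an array of type $I_k(4m,2^{k-2},2,2)$, and it is abelian by construction. I expect the main obstacle to be exactly the verification that the chosen $A^{\perp}$ (or a mild modification of it) meets the Corollary~\ref{conditionsonk} conditions — i.e., ensuring the linear code is genuinely strength~$2$ — rather than anything about $G$ itself, since the hypotheses on $V,W$ were tailored precisely to make $GA^{\perp}$ behave.
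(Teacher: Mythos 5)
Your overall route is exactly the paper's: apply Theorem \ref{bigdeal} with the given $G$ and with $A=[I|K]$ chosen so that the columns of $A^\perp=[K^T|I]^T$ are the indicator vectors ${\bf a}_1,{\bf a}_2\in\mathbb{F}_2^{k}$ of $V$ and $W$, whence $GA^\perp=[\sum_{{\bf x}\in V}{\bf x}\mid \sum_{{\bf y}\in W}{\bf y}]$ is an OA$(4m,2,2,2)$ by the third bullet. However, the step you flag as ``fiddly'' and leave open is a genuine gap, and your proposed repair cannot work. First, the condition you state as equivalent to Corollary \ref{conditionsonk} --- that the map $i\mapsto(({\bf a}_1)_i,({\bf a}_2)_i)$ be injective --- is not the right condition and is unattainable for $k\ge 5$, since $\mathbb{F}_2^2$ has only three nonzero elements. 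The correct reformulation is that every nonzero vector of the nullspace $U=\langle{\bf a}_1,{\bf a}_2\rangle$ has weight at least $3$, equivalently $|V|\ge 3$, $|W|\ge 3$ and $|V\triangle W|\ge 3$. Second, your fallback of ``passing to a suitable full-rank $A$ whose nullspace still contains $U$'' is impossible: a full-rank $(k-2)\times k$ matrix has nullspace of dimension exactly $2$, so any such $A$ has nullspace equal to $U$, and then $<A>$ is the fixed set $U^\perp$ --- there is no freedom left to exploit.

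What you are missing is that the first two bullets were designed precisely to deliver the needed weight condition, so no modification is required. Indeed $\omega({\bf a}_1)=|V|\ge 3$ and $\omega({\bf a}_2)=|W|\ge 3$ by the first bullet, and $\omega({\bf a}_1+{\bf a}_2)=|V\triangle W|\ge 3$ by the second: we have ${\bf v},{\bf w}\in V\triangle W$, and if $V\triangle W=\{{\bf v},{\bf w}\}$ then $V\setminus\{{\bf v}\}=V\cap W=W\setminus\{{\bf w}\}$, a contradiction. The paper phrases this concretely: after placing ${\bf v}$ and ${\bf w}$ in the last two coordinates, the two columns of $K$ are distinct and each contains at least two $1$'s, which is exactly the hypothesis of Corollary \ref{conditionsonk}. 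A smaller inaccuracy: your claim that $G{\bf a}_1$ ``automatically'' has weight $2m$ because $G$ has strength $2$ is false once $|V|\ge 3$ (compare Lemma \ref{countingargument}); the weight condition is part of what the third bullet asserts, via the paper's definition of orthogonality for binary vectors.
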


\begin{proof}
Let $G = \textup{OA}(4m,k,2,2)$ be an orthogonal array satisfying the conditions of the theorem. Let 
$G=[{\bf v}_1|{\bf v}_2|\dots |{\bf v}_k]$.  
Without loss of generality, assume that 
${\bf v}_{k-1}\in V\setminus W$ and 
${\bf v}_{k}\in W\setminus V$. 
Define 
a $(k-2)\times 2$ matrix $K$ over ${\mathbb F}_2$ such that 
the first column of $K$ contains a $1$ in the $j$th row if and only if ${\bf v}_{j}\in V$. Similarly, 
the second column of $K$ contains a $1$ in the $j$th row if and only if ${\bf v}_{j}\in W$. Observe furthermore that 
$[K^T|I]^T$ has the same property. 

Next, let $A$ be the $(k-2)\times k$ matrix defined by $A=[I|K]$. 
If the columns of $K$ are identical, 
then 
$V\setminus \{{\bf v}\}= W\setminus \{{\bf w}\}$, a contradiction.
Moreover, since 
$|V|,|W|\geq 3$, 
the columns of $K$ each have at least two $1$'s.  
Thus, by Corollary \ref{conditionsonk}, 
$<A>$ is an OA$(2^{k-2},k,2,2)$.

Define $A^\perp=[K^T|I]^T$. Observe that
$GA^\perp$ is a $4m \times 2$ matrix with columns
given by $\sum_{{\bf x}\in V} {\bf x}$ and  
$\sum_{{\bf y}\in W} {\bf y}$. 
 By definition, $GA^\perp$ is an 
OA$(4m,2,2,2)$. 
Thus, by Theorem \ref{bigdeal}, 
$G \ \boxplus <A>$
is an $I_k(4m,2^{k-2},2,2)$. 
\end{proof}

Now, observe that the matrix $G$ from Example \ref{eggstwo} is an OA$(12,8,2,2)$ with the property that 
${\bf v}_1+{\bf v}_3+{\bf v}_4+{\bf v}_6+{\bf v}_7$ is orthogonal to  
${\bf v}_2 +{\bf v}_3+{\bf v}_5+{\bf v}_6+{\bf v}_8$.
Moreover, $G$ embeds in the Hadamard matrix $H(12)$ of order 12 ({\tt had.12}, \cite{sloane1999library}). 
 Thus we have the following corollary. 

\begin{corollary}
There exists an abelian  $I_k(12,2^{k-2},2,2)$ where $8\leq k\leq 11$.     
\end{corollary}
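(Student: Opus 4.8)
The plan is to verify that the specific matrix $G$ from Example \ref{eggstwo} satisfies all the hypotheses of Theorem \ref{eightisenough} with $k=8$, so that the theorem produces an abelian $I_8(12,2^6,2,2)$; then extend the range $8\le k\le 11$ by deleting columns. First I would recall that in Example \ref{eggstwo} the array $G$ is an $\textup{OA}(12,8,2,2)$, and set $V=\{{\bf v}_1,{\bf v}_3,{\bf v}_4,{\bf v}_6,{\bf v}_7\}$ and $W=\{{\bf v}_2,{\bf v}_3,{\bf v}_5,{\bf v}_6,{\bf v}_8\}$, reading off the column indices from the two columns of the matrix $K$ (equivalently $A^\perp=[K^T|I]^T$) displayed in Examples \ref{eggsone}–\ref{eggstwo}. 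I would check $|V|=|W|=5\ge 3$, that ${\bf v}_7\in V\setminus W$ and ${\bf v}_8\in W\setminus V$ witness the required non-containment, and that $V\setminus\{{\bf v}_7\}\neq W\setminus\{{\bf v}_8\}$ (indeed $V\setminus\{{\bf v}_7\}=\{{\bf v}_1,{\bf v}_3,{\bf v}_4,{\bf v}_6\}$ while $W\setminus\{{\bf v}_8\}=\{{\bf v}_2,{\bf v}_3,{\bf v}_5,{\bf v}_6\}$, which differ). The orthogonality condition $(\sum_{{\bf x}\in V}{\bf x})\cdot(\sum_{{\bf y}\in W}{\bf y})=3$ (recall both sums have weight $6$ in length $12$, so ``orthogonal'' means inner product $3$) is exactly the computation that $GA^\perp$ is an $\textup{OA}(12,2,2,2)$, already carried out as the displayed matrix $GA^\perp$ in Example \ref{eggstwo}. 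With $m=3$ odd and $k=8\ge 5$, Theorem \ref{eightisenough} then gives an abelian $I_8(12,2^6,2,2)$.

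Next I would handle $k<8$, i.e. $k\in\{5,6,7\}$. The cleanest route is to reapply Theorem \ref{eightisenough} directly: delete appropriately chosen columns of $G$ to obtain an $\textup{OA}(12,k,2,2)$ still containing subsets $V,W$ meeting the three bullet conditions. For instance, deleting columns from the index set $\{1,\dots,6\}$ that lie in neither $V$ nor $W$ is impossible here since $V\cup W=\{1,\dots,8\}$, so one instead deletes columns of $K$ (rows indexed $1$–$6$) together with the corresponding adjustments; a safer and entirely routine alternative is simply to delete $8-k$ columns while retaining ${\bf v}_3$, ${\bf v}_6$ (to keep $|V\cap W|\ge 2$ if needed), ${\bf v}_7$, ${\bf v}_8$, and at least one further private column of each of $V$ and $W$, which is possible for $k\ge 5$; then verify the OA property survives (a sub-OA of an OA is an OA) and the three conditions. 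One subtlety: after deletion the inner product $\sum_{V}{\bf x}\cdot\sum_{W}{\bf y}$ may change, so one must pick the deleted columns so that orthogonality is preserved, or instead note that Lemma \ref{subarrays} already yields $I_{k'}(12,2^6,2,2)$ from $I_8(12,2^6,2,2)$ for $k'\le 8$ — but that gives the wrong number of columns ($2^6$ rather than $2^{k'-2}$), so it does not directly give the stated conclusion.

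Therefore the intended extension is upward, to $k\in\{9,10,11\}$, and here I would invoke the observation recorded just before the corollary: $G$ embeds in the Hadamard matrix $H(12)$ (the matrix \texttt{had.12} of \cite{sloane1999library}). Normalizing $H(12)$ and deleting its all-ones column yields an $\textup{OA}(12,11,2,2)$ containing $G$ as a sub-array on $8$ of its columns. For each $k$ with $9\le k\le 11$ I would take the $\textup{OA}(12,k,2,2)$ consisting of the $8$ columns of $G$ together with $k-8$ further columns of this Hadamard-derived OA, keeping $V$ and $W$ as the same two $5$-subsets of the original $8$ columns; the three bullet conditions then hold verbatim since $V,W$ and the two relevant distinguished columns are unchanged, and the orthogonality of $\sum_V{\bf x}$ with $\sum_W{\bf y}$ is the same previously-checked fact. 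Theorem \ref{eightisenough} then delivers an abelian $I_k(12,2^{k-2},2,2)$ for each such $k$, completing the corollary. The main obstacle is purely bookkeeping: confirming that the column-index sets $V,W$ read off from the displayed $A^\perp$ are correct and that the required inner-product value is $3$ rather than some other number — but both are already verified implicitly by the explicit $GA^\perp$ matrix in Example \ref{eggstwo}, so no real difficulty remains.
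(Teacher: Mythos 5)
Your proposal is correct and follows essentially the same route as the paper: apply Theorem \ref{eightisenough} to the $G$ of Example \ref{eggstwo} with $V=\{{\bf v}_1,{\bf v}_3,{\bf v}_4,{\bf v}_6,{\bf v}_7\}$ and $W=\{{\bf v}_2,{\bf v}_3,{\bf v}_5,{\bf v}_6,{\bf v}_8\}$ for $k=8$, then use the embedding of $G$ in $H(12)$ to append columns for $9\leq k\leq 11$ while keeping $V$ and $W$ fixed. The middle paragraph about $k\in\{5,6,7\}$ is an unnecessary detour (those values lie outside the stated range), but you correctly recognize this and it does not affect the argument.
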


\begin{corollary} 
There exists an abelian  $I_k(20,2^{k-2},2,2)$ where $8\leq k\leq 19$.     
\end{corollary}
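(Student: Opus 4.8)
The plan is to invoke Theorem~\ref{eightisenough} with $m=5$, which is odd and at least $3$. By that theorem it suffices to exhibit, for each $k$ with $8\le k\le 19$, an OA$(20,k,2,2)$ carrying two column-subsets $V$ and $W$ with $|V|,|W|\ge 3$, with some ${\bf v}\in V\setminus W$ and ${\bf w}\in W\setminus V$ such that $V\setminus\{{\bf v}\}\ne W\setminus\{{\bf w}\}$, and with $\sum_{{\bf x}\in V}{\bf x}$ orthogonal to $\sum_{{\bf y}\in W}{\bf y}$. First I would reduce this to a single construction: it is enough to produce one OA$(20,19,2,2)$, say $G=[{\bf v}_1|\cdots|{\bf v}_{19}]$, together with a suitable pair $V,W$ whose union lies in $\{{\bf v}_1,\dots,{\bf v}_8\}$. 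Indeed, for any $k$ in the range the subarray $[{\bf v}_1|\cdots|{\bf v}_k]$ is again an OA$(20,k,2,2)$ (deleting columns preserves strength here, as $k\ge 2$), it still contains $V$ and $W$, and $k\ge 5$, so Theorem~\ref{eightisenough} then yields an abelian $I_k(20,2^{k-2},2,2)$ for every such $k$.

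To obtain $G$, I would start from a Hadamard matrix $H(20)$, which exists (for instance by the Paley construction, since $19\equiv 3\pmod 4$ is prime, or take {\tt had.20} from \cite{sloane1999library}). Normalizing $H(20)$, deleting its first column and replacing each $-1$ by $0$ gives, as described in Section~1, the desired $G=\textup{OA}(20,19,2,2)$; a final column permutation will place the pair $V,W$ among the first $8$ columns once it has been found.

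The substantive step is finding $V$ and $W$, and I would organise it via the $\pm1$ encoding $x\mapsto 1-2x$. Under this encoding a sum $\sum_{{\bf x}\in S}{\bf x}$ taken mod $2$ corresponds to the entrywise product of the $\pm1$ images of the columns of $S$; it has weight $10$ precisely when that product is orthogonal to the all-ones vector, and two weight-$10$ sums $\sum_{{\bf x}\in V}{\bf x}$ and $\sum_{{\bf y}\in W}{\bf y}$ are orthogonal (equivalently, their two columns form an OA$(20,2,2,2)$) precisely when, in addition, the entrywise product over $V\triangle W$ is orthogonal to the all-ones vector. So the task becomes: find $V,W$ with $|V|,|W|\ge 3$ satisfying the elementary non-equality condition, such that the entrywise products over $V$, over $W$ and over $V\triangle W$ each sum to zero. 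Restricting attention to, say, $|V|=|W|=3$ leaves only finitely many candidate pairs, and a direct inspection of the explicit matrix $G$ produces one. This verification on $G$ is the part I expect to be the main obstacle: it is a finite but concrete computation, entirely analogous to the check behind the preceding corollary for $I_k(12,2^{k-2},2,2)$, where the pair $\{{\bf v}_1,{\bf v}_3,{\bf v}_4,{\bf v}_6,{\bf v}_7\}$, $\{{\bf v}_2,{\bf v}_3,{\bf v}_5,{\bf v}_6,{\bf v}_8\}$ played this role.

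Finally, having fixed $V,W$ among ${\bf v}_1,\dots,{\bf v}_8$, for each $k\in\{8,\dots,19\}$ the OA$(20,k,2,2)$ on columns ${\bf v}_1,\dots,{\bf v}_k$ satisfies all the hypotheses of Theorem~\ref{eightisenough} (with $m=5$ odd and $k\ge 5$), which produces an abelian $I_k(20,2^{k-2},2,2)$, as required.
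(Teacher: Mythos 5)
Your overall strategy is the paper's: build an OA$(20,k,2,2)$ from columns of a Hadamard matrix of order $20$, exhibit two column-subsets $V,W$ satisfying the hypotheses of Theorem~\ref{eightisenough}, and note that the larger values of $k$ follow by keeping $V\cup W$ among the columns retained. Your reduction to a single OA$(20,19,2,2)$ and your translation of the orthogonality condition into entrywise products of $\pm1$ columns are both correct.

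However, there is a genuine gap where you defer the ``substantive step.'' First, the existence of a suitable pair $V,W$ for an order-$20$ Hadamard matrix is precisely the content of this corollary; asserting that ``a direct inspection of the explicit matrix $G$ produces one'' without producing it leaves the proof incomplete. The paper supplies the witness explicitly: columns $2$ to $9$ of {\tt had.20.toncheviv} (suitably permuted) with $V=\{{\bf v}_1,{\bf v}_3,{\bf v}_4,{\bf v}_6,{\bf v}_7\}$ and $W=\{{\bf v}_2,{\bf v}_3,{\bf v}_5,{\bf v}_6,{\bf v}_8\}$. Second, and more seriously, the search space you propose to inspect is provably empty: by the counting argument of Lemma~\ref{countingargument}, in any OA$(4m,\ell,2,2)$ with $m$ odd the sum of $\ell$ columns has weight $\equiv 0 \pmod 4$ when $\ell\equiv 0$ or $3 \pmod 4$, and weight $\equiv 2 \pmod 4$ when $\ell\equiv 1$ or $2\pmod 4$. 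Orthogonality in the sense of Theorem~\ref{eightisenough} requires each of $\sum_{{\bf x}\in V}{\bf x}$ and $\sum_{{\bf y}\in W}{\bf y}$ to have weight $10\equiv 2\pmod 4$, so $|V|$ and $|W|$ must be $\equiv 1$ or $2\pmod 4$; combined with $|V|,|W|\geq 3$ the minimum feasible size is $5$. Restricting to $|V|=|W|=3$ can therefore never succeed, and any correct completion of your argument must search over (at least) $5$-subsets, which is also why the construction needs $k\geq 8$ in the first place. Once the search is run over subsets of admissible sizes and a witness is recorded, the rest of your argument goes through as in the paper.
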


\begin{proof}
The following is a transpose of an OA$(20,8, 2,2)$ which has the property that
${\bf v}_1+{\bf v}_3+{\bf v}_4+{\bf v}_6+{\bf v}_7$ is orthogonal to  
${\bf v}_2 +{\bf v}_3+{\bf v}_5+{\bf v}_6+{\bf v}_8$.
The result follows by Theorem \ref{eightisenough}.

$$ \left[
\begin{array}{cccc cccc cccc cccc cccc}
1&	1&	1&	1&	0&	1&	0&	0&	0&	0&	1&	0&	0&	0&	0&	1&	1&	1&	1&	0 \\
1&	1&	0&	0&	0&	0&	1&	0&	1&	1&	1&	1&	0&	0&	1&	0&	1&	1&	0&	0 \\
1&	1&	1&	1&	1&	1&	1&	1&	1&	1&	0&	0&	0&	0&	0&	0&	0&	0&	0&	0 \\
1&	1&	0&	0&	1&	1&	0&	0&	1&	0&	0&	1&	0&	1&	0&	1&	0&	1&	0&	1 \\
1&	1&	0&	0&	1&	1&	0&	0&	0&	1&	0&	0&	1&	0&	1&	0&	1&	0&	1&	1 \\
1&	1&	1&	0&	0&	0&	1&	1&	0&	0&	0&	1&	1&	0&	0&	1&	1&	0&	0&	1 \\
1&	1&	1&	1&	1&	0&	0&	0&	0&	0&	1&	1&	1&	1&	1&	0&	0&	0&	0&	0 \\
1&	1&	0&	1&	0&	0&	1&	1&	0&	0&	0&	0&	0&	1&	1&	0&	0&	1&	1&	1 \\
\end{array}
\right]
$$

The above array consists of columns 2 to 9 of the Hadamard matrix ({\tt had.20.toncheviv},  \cite{sloane1999library})  with the permutation $(2 \ \ 4)(5 \ \ 7)(3 \ \ 8 \ \ 6 \ \ 9)$ applied to its columns.
\end{proof}

\begin{corollary}
For any odd $m\geq 3$, there exists an abelian $I_k(4m,2^{k-2},2,2)$ where $8\leq k\leq 11$.     
\label{eightandabove}
\end{corollary}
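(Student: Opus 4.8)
The plan is to realise every one of these arrays as a single block $G\boxplus <A>$, where $<A>$ is one fixed linear OA$(2^{k-2},k,2,2)$ and $G$ is assembled from copies of a few small orthogonal arrays stacked vertically; this keeps track of the abelian structure, which plain gluing (Lemma \ref{lem:glueing}) would not. Fix $k$ with $8\le k\le 11$. I would first produce three blocks $G_{12}=\textup{OA}(12,k,2,2)$, $G_{16}=\textup{OA}(16,k,2,2)$ and $G_{20}=\textup{OA}(20,k,2,2)$, all compatible with the \emph{same} $(k-2)\times k$ matrix $A=[I\mid K]$, in the sense that, writing $A^{\perp}=[K^{T}\mid I]^{T}$, the array $<A>$ is an OA$(2^{k-2},k,2,2)$ and each of $G_{12}A^{\perp}$, $G_{16}A^{\perp}$, $G_{20}A^{\perp}$ is an OA$(\,\cdot\,,2,2,2)$. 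Granting this, stacking $a$ copies of $G_{12}$, $b$ copies of $G_{16}$ and $c$ copies of $G_{20}$ gives $G=\textup{OA}(12a+16b+20c,k,2,2)$ for which $GA^{\perp}$ is still an OA$(\,\cdot\,,2,2,2)$ (stacking preserves strength $2$), so by Theorem \ref{bigdeal} the array $G\boxplus <A>$ is of type $I_k(12a+16b+20c,2^{k-2},2,2)$ and is abelian, being $C\boxplus R$ with $C=G$ and $R=<A>$. Since every odd $m\ge 3$ is expressible as $m=3a+4b+5c$ with $a,b,c\ge 0$ --- take $m=3a+5c$, which works for every such $m$ except $m=7$, and $7=3+4$ for the one exception --- this yields the claimed abelian $I_k(4m,2^{k-2},2,2)$.

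The blocks $G_{12}$ and $G_{20}$ and the matrix $A$ are precisely the data already appearing in the proofs of the two preceding corollaries: there Theorem \ref{eightisenough} is applied with the column-index sets $V=\{1,3,4,6,7\}$ and $W=\{2,3,5,6,8\}$, after the reordering placing an element of $V\setminus W$ in position $k-1$ and one of $W\setminus V$ in position $k$, and $K$ is the $(k-2)\times 2$ matrix whose columns record membership in $V$ and in $W$. Performing the same reordering in both corollaries makes $K$, hence $A$ and $<A>$, literally the same for $G_{12}$ and $G_{20}$. For $G_{16}$ I would take $G_{16}=<B>$, where $B$ is a $4\times k$ matrix over $\mathbb{F}_2$ of rank $4$ whose columns $b_1,\dots,b_k$ are distinct nonzero vectors of $\mathbb{F}_2^{4}$ (possible since $k\le 11\le 2^{4}-1$), chosen so that $\sum_{j\in V\triangle W}b_j\neq 0$, with $V,W$ read as subsets of the reordered positions. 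Then $c:=\sum_{j\in V}b_j$ and $c':=\sum_{j\in W}b_j$ are linearly independent in $\mathbb{F}_2^4$; since column $j$ of $<B>$ is $(x\cdot b_j)_{x\in\mathbb{F}_2^{4}}$, Lemma \ref{fundam} gives that $<B>$ is an OA$(16,k,2,2)$, and a direct computation identifies the two columns of $<B>A^{\perp}$ as $(x\cdot c)_{x}$ and $(x\cdot c')_{x}$, whence $<B>A^{\perp}$ is an OA$(16,2,2,2)$.

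The point that really needs care is this common-$A$ compatibility, since Lemma \ref{lem:glueing} by itself does not track the abelian structure and gluing abelian arrays can fail to be abelian. For $G_{12}$ and $G_{20}$ the matching is free --- the relevant pattern $V,W$ is the very one exhibited in those corollaries --- and for $G_{16}$ it only requires enough room among the $2^{4}-1$ nonzero vectors of $\mathbb{F}_2^{4}$ to keep three explicit sums nonzero, which is routine. The sole arithmetic observation is that the single odd value $m=7$ missing from $\{3a+5c:a,c\ge 0\}$ is recovered by $7=3+4$, i.e.\ by permitting one block of size $16$; the remainder is bookkeeping inherited from the preceding corollaries.
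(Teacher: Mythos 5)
Your proposal is correct, and it follows the same skeleton as the paper's proof: decompose $4m=12a+16b+20c$ (every odd $m\geq 3$ is $3a+4b+5c$) and combine the $12$- and $20$-row blocks from the two preceding corollaries with a $16$-row block. The paper's own proof is a one-liner that cites Lemma \ref{lem:glueing}, which produces an array of the right type but does not by itself say anything about the \emph{abelian} adjective in the statement; your version is more careful on exactly this point, stacking the orthogonal arrays $G_{12},G_{16},G_{20}$ into a single $C$ against one common $R=<A>$ and then invoking Theorem \ref{bigdeal}, so that the result is visibly of the form $C\boxplus R$. This works: stacking preserves strength $2$ both for $G$ and for $GA^{\perp}$, the common $K$ for the $12$- and $20$-row blocks is available because both preceding corollaries use the same index pattern $V=\{1,3,4,6,7\}$, $W=\{2,3,5,6,8\}$ (and for $9\leq k\leq 11$ the extra columns lie outside $V\cup W$, contributing zero rows to $K$, which Corollary \ref{conditionsonk} permits), and your bespoke $G_{16}=<B>$ replaces the paper's appeal to Theorem \ref{biggerthanelvis} in a way that keeps $A$ fixed. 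The only loose end is the assertion that one can choose $k\leq 11$ distinct nonzero spanning columns $b_j\in\mathbb{F}_2^4$ with the three sums $\sum_{V}b_j$, $\sum_{W}b_j$ and $\sum_{V\triangle W}b_j$ all nonzero; you correctly identify that all three (not just the third) must be nonzero for $c,c'$ to be $2$-independent, but you leave the existence as ``routine'' --- it is true and easy to exhibit, but a one-line explicit choice would close the argument completely.
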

	
\begin{proof}
By Theorem \ref{biggerthanelvis} there exists an $I_k(16,2^{k-2},2,2)$ where $8 \leq k \leq 15$. Thus the result follows by previous two corollaries and Lemma \ref{lem:glueing}.
\end{proof}

Via a counting argument, Theorem \ref{eightisenough} cannot work for $m$ odd if 
$k\leq 6$. We outline this argument in the conclusion in Lemma \ref{countingargument}.   
Moreover, computational results 
show that $k=7$ does not work in the cases $m\in \{3,5\}$.  

The above and Theorem \ref{eightisenough} thus motivate the following conjecture, which is stronger than the Hadamard conjecture. 
\begin{conjecture}
For each odd $m$, there exists a Hadamard matrix $4m$  which yields an orthogonal array OA$(4m,8,2,2)$ satisfying the conditions of Theorem \ref{eightisenough}.
\label{whoknows}
\end{conjecture}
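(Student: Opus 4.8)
Since Conjecture~\ref{whoknows} is explicitly stronger than the Hadamard conjecture, the plan is not to give an unconditional proof but to (a) establish it for every order $4m$ at which a Hadamard matrix is known, and (b) reduce it, for the remaining orders, to the Hadamard conjecture together with a structural statement about arbitrary Hadamard matrices. Throughout one may assume $m\ge 3$ odd, since $4m\ge 12$ is then forced and an OA$(4m,8,2,2)$ is admissible.

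The first step is to restate the required property in the $\{\pm 1\}$-world. Let $H$ be a normalized Hadamard matrix of order $4m$; its associated OA$(4m,4m-1,2,2)$ is obtained by deleting the constant column and replacing $-1$ by $0$, and under $x\mapsto(-1)^x$ the OA columns become the non-constant columns $h_1,\dots,h_{4m-1}$ of $H$, with $\mathbb F_2$-addition of OA columns corresponding to entrywise product of the $h_i$. A binary vector of weight $2m$ corresponds to a $\pm 1$ vector orthogonal to the all-ones vector $\mathbf 1$; and a short count over the four cell-types of a pair of weight-$2m$ binary vectors shows that two such vectors are orthogonal in the sense of the paper exactly when their $\pm 1$ images are orthogonal. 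Since distinct columns of $H$ are orthogonal, $\mathbf 1\cdot(h_ih_j)=h_i\cdot h_j=0$, so every two-fold product is balanced; and because shared columns cancel entrywise, $\bigl(\prod_{i\in V}h_i\bigr)\cdot\bigl(\prod_{j\in W}h_j\bigr)=\mathbf 1\cdot\prod_{i\in V\triangle W}h_i$. Hence the hypotheses of Theorem~\ref{eightisenough} amount to the following: find eight columns $h_1,\dots,h_8$ of $H$ and incomparable index sets $V,W\subseteq\{1,\dots,8\}$ with $|V|,|W|\ge 3$ and not differing by exactly one element on each side (equivalently $|V\setminus W|\ge 2$ or $|W\setminus V|\ge 2$), such that the three products $\prod_{i\in V}h_i$, $\prod_{j\in W}h_j$ and $\prod_{i\in V\triangle W}h_i$ are all balanced. (The configuration $V=\{1,3,4,6,7\}$, $W=\{2,3,5,6,8\}$ used for {\tt had.12} and {\tt had.20} is of this form.)

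For the known infinite families I would make each of these three balance conditions explicit and then choose the columns to satisfy them. For the Paley constructions of order $q+1$, index the relevant columns by $\mathbb F_q$; the balance of a $t$-fold product of columns $c_1,\dots,c_t$ is the vanishing of the character sum $\sum_{x\in\mathbb F_q}\chi\bigl((x-c_1)\cdots(x-c_t)\bigr)$ (up to a bounded correction from the point at infinity and the diagonal), where $\chi$ is the quadratic character; the Weil bound controls these sums, and one would exploit additional algebraic structure — Frobenius orbits of the $c_i$, or pairing $c_i$ with $-c_i$ — to force the three sums to vanish simultaneously while keeping the $c_i$ distinct, checking the finitely many small exceptional orders by computer as was done above. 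A Kronecker argument with the factor $H(4)$ lifts a configuration through orders of the form $4^{a}\cdot(\text{smaller known order})$, and Williamson-type matrices can be handled via their four-block structure. For orders realized only by sporadic Hadamard matrices one verifies the configuration computationally, exactly as for $H(12)$ and $H(20)$.

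The remaining orders are the heart of the difficulty, and there the goal is the reduction in (b): prove that \emph{every} Hadamard matrix of order $4m$ with $m\ge 3$ odd contains a suitable eight-column configuration, which would make Conjecture~\ref{whoknows} equivalent to the Hadamard conjecture. The natural attack is a counting argument over $\mathbb F_2$: an eight-subset of columns fails only if too few of its three-, four-, five- and six-fold products are balanced, which is a strong linear condition on the $\mathbb F_2$-span of those columns; one would bound the number of such ``bad'' eight-subsets and show it cannot reach $\binom{4m-1}{8}$ once $4m-1\ge 11$, since otherwise $H$ would be forced into a degenerate form incompatible with $HH^{T}=4mI$. Carrying this through is the main obstacle: it requires understanding the joint behaviour of the ``product-balanced'' events across column subsets of an arbitrary Hadamard matrix, and I expect this — rather than the tedious but routine character-sum work for the structured families — to be where the genuine effort lies.
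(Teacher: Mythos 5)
The statement you are asked to prove is posed in the paper as an open conjecture, not a theorem: the paper supplies no proof, explicitly observes that Conjecture~\ref{whoknows} is \emph{stronger} than the Hadamard conjecture, and supports it only by exhibiting suitable column configurations in specific Hadamard matrices of orders $12$ and $20$ (i.e.\ the cases $m=3,5$). So there is no argument of the paper's to compare yours against, and any complete proof would be a major result. Your opening reformulation is correct and worth keeping: translating OA columns to $\pm 1$ columns $h_i$ of a normalized Hadamard matrix, the weight-$2m$ and orthogonality requirements in Theorem~\ref{eightisenough} do become the three balance conditions $\mathbf 1\cdot\prod_{i\in V}h_i=\mathbf 1\cdot\prod_{j\in W}h_j=\mathbf 1\cdot\prod_{i\in V\triangle W}h_i=0$, and your rewriting of the second bullet of Theorem~\ref{eightisenough} as ``$V\setminus W,\,W\setminus V\neq\emptyset$ and not both of size $1$'' is an accurate equivalence. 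This matches the mechanism behind the paper's two verified examples.

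Beyond that, however, nothing is actually proved, and the plan has concrete defects. For the Paley families, the Weil bound only gives $|\sum_x\chi((x-c_1)\cdots(x-c_t))|\le (t-1)\sqrt q$; you need these sums to be \emph{exactly} a prescribed small value (so that the corresponding product has weight exactly $2m$), and the ``additional algebraic structure'' that is supposed to force this is never identified -- note also that Lemma~\ref{countingargument} already forces $|V|,|W|,|V\triangle W|\equiv 1$ or $2\pmod 4$, a constraint your sketch does not engage with. The Kronecker step is vacuous for this conjecture: tensoring with $H(4)$ (or any $H(2^a)$, $a\ge 1$) produces orders $4m$ with $m$ even, whereas the conjecture concerns only odd $m$, so no new relevant orders are reached this way. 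Finally, the reduction in your part (b) asserts, without any argument, that \emph{every} Hadamard matrix of order $4m$ ($m\ge 3$ odd) contains a good eight-column configuration; this is a strictly stronger statement than the conjecture (which only asks for one such matrix per order), it is not supported by any estimate on the number of ``bad'' eight-subsets, and you yourself flag it as the unexecuted ``main obstacle.'' In short, what you have is a correct change of language plus a research programme; the entire substantive content of a proof is missing.
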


If the above conjecture is true, then by Theorem \ref{eightisenough},  
there exists an 
$I_{k}(4m,2^{k-2},2,2)$ for any $8\leq k\leq 4m-1$. 

We next focus on a strategy for the case $k\leq 7$. 
 Our constructions are typically non-abelian. 
In the following, $\oplus$ is  a binary operation that gives the concatenation of two vectors.
 That is, 
 $$(a_1,a_2,\dots ,a_r)\oplus (b_1,b_2,\dots ,b_s)=(a_1,a_2,\dots ,a_r,b_1,b_2,\dots ,b_s).$$
The following lemma is implied by the definition of an orthogonal array. 
Note that ${\bf 1}$ is the vector containing only $1$'s.

\begin{lemma}
Consider a set $S$ of $2^k $ binary vectors of dimension $ k + 2$ with the following properties: 
\begin{itemize}
    \item For each ${\bf v}\in [2]^k$, the vector ${\bf v}\oplus (i,j)\in S$, for some $i,j\in [2]$;
    \item  For each $(i,j)\in [2]^2$, 
there are precisely $2^{k-2}$ vectors in $S$ of the form 
${\bf v}\oplus (i,j)$ for some ${\bf v}$; 
    \item The vector ${\bf v}\oplus (i,j)\in S$ if and only if $({\bf 1}+{\bf v})\oplus (i,j)\in S$.
\end{itemize}
Then the vectors of $S$ are the rows of an OA$(|S|,k+2,2,2)$.   
\label{strength2}
\end{lemma}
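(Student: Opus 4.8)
\textbf{Proof plan for Lemma \ref{strength2}.}

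The plan is to verify directly that the $|S|\times(k+2)$ array whose rows are the vectors of $S$ satisfies the definition of an OA$(|S|,k+2,2,2)$, namely that every pair of columns contains each of $(0,0),(0,1),(1,0),(1,1)$ exactly $|S|/4 = 2^{k-2}$ times. There are three types of column pairs to consider, according to whether both columns lie among the first $k$ coordinates, one lies in the first $k$ and one in the last two, or both are the last two coordinates; I would treat these three cases in turn.

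First I would handle the pair consisting of the last two coordinates: this is immediate from the second bullet, which says each $(i,j)\in[2]^2$ occurs as the suffix of exactly $2^{k-2}$ vectors of $S$. Next, for a pair of coordinates $\{a,b\}$ with $a,b\in[k]$, I would use the first bullet together with the third bullet. By the first bullet, for each ${\bf v}\in[2]^k$ exactly one extension ${\bf v}\oplus(i,j)$ lies in $S$; hence the projection of $S$ onto the first $k$ coordinates is a multiset in which each ${\bf v}\in[2]^k$ appears exactly once, i.e.\ it is a copy of the full factorial $[2]^k$. Therefore in columns $a,b$ each pattern in $[2]^2$ appears exactly $2^{k-2}$ times, as required. (For this case the third bullet is not even needed, but it is harmless to note it.)

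The main work — and the step I expect to be the genuine obstacle — is the mixed case: a coordinate $a\in[k]$ paired with one of the last two coordinates, say the $(k+1)$st (the $(k+2)$nd is symmetric). Here I would fix $i\in[2]$ and count the vectors ${\bf v}\oplus(i,j)\in S$ with $v_a = c$ for a fixed $c\in[2]$; I must show this count is $2^{k-2}$ independent of $c$. Group the vectors of $[2]^k$ into the $2^{k-1}$ pairs $\{{\bf v},{\bf 1}+{\bf v}\}$. Since $({\bf 1}+{\bf v})_a = 1 + v_a$, each such pair contributes exactly one vector with $v_a=0$ and one with $v_a=1$. By the third bullet, ${\bf v}\oplus(i,j)\in S$ iff $({\bf 1}+{\bf v})\oplus(i,j)\in S$, so the set of ${\bf v}$ with ${\bf v}\oplus(i,j)\in S$ is a union of such pairs; consequently exactly half of those ${\bf v}$ have $v_a=0$ and half have $v_a=1$. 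Combined with the second bullet (there are $2^{k-2}$ choices of ${\bf v}$ with ${\bf v}\oplus(i,\cdot)\in S$ for the relevant suffix value), a short count gives $2^{k-3}$ vectors for each combination of $i$, $j$ and value of $v_a$; summing over the two values of $j$ yields $2^{k-2}$ vectors ${\bf v}\oplus(i,\ast)\in S$ with $v_a = c$, for each $i$ and each $c$. This establishes that columns $a$ and $k+1$ contain each pattern of $[2]^2$ exactly $2^{k-2}$ times, completing the verification and hence the proof.
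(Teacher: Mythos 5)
Your verification is correct and complete: the three-way split into column-pair types, the observation that the first bullet forces the prefix projection to be exactly $[2]^k$, and the pairing $\{{\bf v},{\bf 1}+{\bf v}\}$ argument for the mixed case all work (the only implicit assumption is $k\geq 3$ so that the $2^{k-2}$ vectors with a fixed suffix split into complementary pairs, which holds in every application in the paper). The paper itself offers no proof, stating only that the lemma ``is implied by the definition of an orthogonal array,'' so your argument is precisely the routine check being left to the reader.
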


\begin{lemma}
Let $\{{\bf w}, {\bf x}, {\bf y}, {\bf z},{\bf 1}\}$ be a set of linearly independent binary vectors of 
dimension $k\geq 5$.
Consider the $4\times 2^k$ array of vectors of dimension $k+2$ given by $C \boxplus R$, 
where $ R $ is the set of $2^k$ vectors with $0$ in the final two positions and
$$C=({\bf w}\oplus (0,0), {\bf x}\oplus (0,1), {\bf y}\oplus (1,0), 
{\bf z}\oplus(1,1))^T.$$ 
Then the elements in each column of $C\boxplus R$ can be rearranged so that each row is an 
OA$(2^k,k+2,2,2)$. 
\end{lemma}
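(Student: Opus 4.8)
The plan is to use Lemma \ref{strength2}: it suffices to rearrange the $2^k$ vectors in each column of $C\boxplus R$ so that, reading across a fixed row, the resulting set $S$ of $2^k$ vectors of dimension $k+2$ satisfies the three bulleted properties there. Note first that the multiset of all entries of $C\boxplus R$ is exactly $[2]^k$ in the first $k$ coordinates, each value occurring once, with the last two coordinates determined: an entry sitting in the row indexed by ${\bf w}$ of $C$ ends in $(0,0)$, in the ${\bf x}$-row ends in $(0,1)$, etc. So for each ${\bf u}\in[2]^k$ there is exactly one pair $(i,j)$ with ${\bf u}\oplus(i,j)$ appearing in the whole array, namely the one forced by which coset representative (${\bf w},{\bf x},{\bf y},{\bf z}$) of $\langle{\bf 1}\rangle$ the vector ${\bf u}$ plus that representative lands in; equivalently, the value in the first $k$ coordinates already tells us the last two. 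Hence the first bullet of Lemma \ref{strength2} is automatic, and the third bullet (${\bf u}\oplus(i,j)\in S \iff ({\bf 1}+{\bf u})\oplus(i,j)\in S$) will hold in each row provided that whenever a row contains ${\bf u}\oplus(i,j)$ in some column, it also contains $({\bf 1}+{\bf u})\oplus(i,j)$ in some column. The only content to prove is the second bullet together with this pairing-within-a-row condition.

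The key observation is that $C\boxplus R$ already has a very rigid structure. Since ${\bf 1}\in\langle{\bf w},{\bf x},{\bf y},{\bf z}\rangle$-complement is linearly independent from ${\bf w},{\bf x},{\bf y},{\bf z}$, adding ${\bf 1}$ to a row vector of $R$ permutes the columns of $R$ (indeed $R$ is all of $[2]^k$ with $00$ appended, so ${\bf r}\mapsto{\bf r}+{\bf 1}\oplus(0,0)$ is a fixed-point-free involution on its columns). Consider, in a fixed row $\rho$ of $C$ (say the one with $C$-entry ${\bf c}\oplus(i,j)$), the set of entries $\{({\bf c}+{\bf r})\oplus(i,j): {\bf r}\in R\}$ before rearrangement: this is precisely $\{{\bf u}\oplus(i,j): {\bf u}\in {\bf c}+[2]^k\}=\{{\bf u}\oplus(i,j):{\bf u}\in[2]^k\}$, i.e.\ \emph{all} $2^k$ vectors of dimension $k$ with $(i,j)$ appended. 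That over-counts: globally ${\bf u}\oplus(i,j)$ should occur only for half the ${\bf u}$'s. So the rearrangement must move, in each column, some entries between rows. Concretely, in column $\ell$ the four entries are $\{({\bf c}_\rho+{\bf r}_\ell)\oplus(i_\rho,j_\rho):\rho=1,\dots,4\}$; since the four ${\bf c}_\rho$ are coset representatives of $\langle{\bf 1}\rangle$ in a $2$-dimensional space... — more simply, the four first-$k$-coordinate values in a column are ${\bf r}_\ell+{\bf w},{\bf r}_\ell+{\bf x},{\bf r}_\ell+{\bf y},{\bf r}_\ell+{\bf z}$, an affine plane's worth of vectors, and we are free to attach the labels $(0,0),(0,1),(1,0),(1,1)$ to these four in any of the $4!$ ways we like. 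The task is to choose, for each column, one of these assignments so that each row ends up with exactly $2^{k-2}$ vectors carrying each label and so that the ${\bf 1}$-pairing holds in each row.

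Here is how I would make the choice. Pair up the columns of $R$ by the fixed-point-free involution $\iota:{\bf r}\mapsto{\bf r}+{\bf 1}$ (on the first $k$ coordinates; the last two stay $0$), giving $2^{k-1}$ pairs $\{\ell,\iota(\ell)\}$. For the pair $\{\ell,\iota(\ell)\}$ the eight first-$k$-coordinate values involved are $\{{\bf r}_\ell+{\bf a},{\bf r}_\ell+{\bf 1}+{\bf a}:{\bf a}\in\{{\bf w},{\bf x},{\bf y},{\bf z}\}\}$, which splits into four ${\bf 1}$-pairs, one lying in column $\ell$'s slot for label ${\bf a}$ and the other in column $\iota(\ell)$'s slot. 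So if in column $\ell$ I attach label ${\bf a}$'s designated pair-position $(i,j)$ to ${\bf r}_\ell+{\bf a}$ and in column $\iota(\ell)$ attach the \emph{same} $(i,j)$ to ${\bf r}_\ell+{\bf 1}+{\bf a}$, then within the row that receives label $(i,j)$ from these two columns we get the matched pair $\{{\bf u}\oplus(i,j),({\bf 1}+{\bf u})\oplus(i,j)\}$ — third bullet satisfied, pair by pair, column-pair by column-pair. For the count: fix a bijection $\phi$ from $\{{\bf w},{\bf x},{\bf y},{\bf z}\}$ to $[2]^2$ and, for \emph{every} column $\ell$, attach label $\phi({\bf a})$ to the entry whose first $k$ coordinates are ${\bf r}_\ell+{\bf a}$; this is a legitimate rearrangement within each column (it's a relabelling of which of the four entries goes in which row), it respects the ${\bf 1}$-pairing as just argued, and in the row corresponding to label $(i,j)=\phi({\bf a})$ the entries across all $2^k$ columns are exactly $\{({\bf r}_\ell+{\bf a})\oplus(i,j):\ell\}=\{{\bf u}\oplus(i,j):{\bf u}\in[2]^k\}$ — but we only \emph{keep} half of these in this row because each ${\bf u}$ occurs globally in one row only; wait, this is the subtlety. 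Let me restate the correct choice: in column $\ell$, attach to the entry ${\bf r}_\ell+{\bf a}$ the label $(i,j)$ determined by $\psi(\ell,{\bf a})$ where $\psi$ is chosen so that each label appears once per column (forced) and $2^{k-2}$ times per row; since the four labels partition the four rows and there are $2^k$ columns, "$2^{k-2}$ per row per label" is exactly "each row gets each label a quarter of the time", and because the map ${\bf r}_\ell\mapsto$ (which label goes to row $1$, i.e.\ to the ${\bf w}$-row) can be taken to be any function from $[2]^k$ to $[2]^2$ that is balanced on ${\bf 1}$-cosets — e.g.\ the first two coordinates of ${\bf r}_\ell$, assuming ${\bf 1}$ has a $0$ somewhere among the first two coordinates, which we may arrange — all three bullets hold. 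The main obstacle is precisely this bookkeeping: checking that one can simultaneously (i) keep each label once per column, (ii) get $2^{k-2}$ per label per row, and (iii) preserve the ${\bf 1}$-pairing inside every row; the linear independence of $\{{\bf w},{\bf x},{\bf y},{\bf z},{\bf 1}\}$ is exactly what guarantees the eight-element "${\bf 1}$-pair of a column pair" picture is non-degenerate, and $k\geq5$ is what makes $2^{k-2}$ a sensible (and even) count. Once the assignment is pinned down, Lemma \ref{strength2} finishes the proof.
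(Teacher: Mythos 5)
Your reduction to Lemma \ref{strength2} is the right framework (it is also how the paper proceeds), but the argument rests on a false description of $C\boxplus R$ and consequently omits the hardest condition. You assert that each ${\bf u}\in[2]^k$ occurs exactly once in the whole array, with its last two coordinates ``determined'' by ${\bf u}$. This is not so: the entry in the row of ${\bf c}_\rho$ and the column of ${\bf r}$ is $({\bf c}_\rho+{\bf r})\oplus(i_\rho,j_\rho)$, so every ${\bf u}\in[2]^k$ occurs exactly four times, once in each row and once with each of the four suffixes (the array is a replication-$1$ factorial design on $[2]^{k+2}$). Hence the first bullet of Lemma \ref{strength2} --- that after rearrangement every row still contains every ${\bf u}$ in its first $k$ coordinates --- is \emph{not} automatic; a bad rearrangement could send all four occurrences of some ${\bf u}$ to the same row. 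This is precisely the condition that forces an interplay between the rearrangement and the vectors ${\bf w},{\bf x},{\bf y},{\bf z}$, and it is exactly the condition missing from your final ``bookkeeping'' list. Relatedly, you describe the freedom as ``attaching the labels $(0,0),\dots,(1,1)$ to the four first-$k$-coordinate values in any of the $4!$ ways'': the suffixes are part of the fixed entries and travel with them; the only freedom is which of the four fixed vectors of a column is placed in which row.

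Even setting this aside, the proof is not completed: you acknowledge that simultaneously achieving your conditions is ``the main obstacle'' and then assert, without verification, that a choice such as ``the first two coordinates of ${\bf r}_\ell$'' works. A correct abstract version of your plan does exist --- for instance, send the entry $({\bf r}+{\bf a})\oplus\lambda({\bf a})$ to row $f({\bf r})+g({\bf a})\in[2]^2$, where $f$ is linear and surjective, vanishes on ${\bf 1}$ and on ${\bf w}+{\bf x}+{\bf y}+{\bf z}$ (possible since $k\geq 5$), and $g$ is chosen so that both $g$ and $g+f$ restrict to bijections from $\{{\bf w},{\bf x},{\bf y},{\bf z}\}$ to $[2]^2$; these four requirements match, respectively, the third bullet, the possibility of choosing $g$, the second bullet, and the first bullet of Lemma \ref{strength2} --- but none of these conditions is identified or checked in your write-up. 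The paper instead exhibits an explicit $4\times 8$ rearrangement gadget $H$ on the subgroup generated by ${\bf w},{\bf x},{\bf y},{\bf z}$, doubles it by ${\bf 1}$ and by ${\bf v}$ to cover $G=\langle{\bf w},{\bf x},{\bf y},{\bf z},{\bf 1}\rangle$, and then translates over the cosets of $G$; the linear independence hypothesis is what makes that gadget non-degenerate. As it stands, your proposal has a genuine gap at the central step.
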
	

\begin{proof}
Observe that $H$ is a $4\times 8$ subarray of $C\boxplus R$:
$$
H=\begin{array}{|c|c|c|c|c|c|c|c|}
\hline
{\bf w} & {\bf x} & {\bf y} & {\bf z} & {\bf t} & {\bf u} & {\bf s} & {\bf v} \\  
\hline
{\bf x} & {\bf w} & {\bf t} & {\bf u} & {\bf y} & {\bf z} & {\bf v} & {\bf s}
\\
\hline
{\bf y} & {\bf t} & {\bf w} & {\bf s} & {\bf x} & {\bf v} & {\bf z} & {\bf u}
\\
\hline
{\bf z} & {\bf u} & {\bf s} & {\bf w} & {\bf v} & {\bf x} & {\bf y} & {\bf t}
\\
\hline
\end{array},$$
where
${ \bf s}={ \bf w}+ { \bf y}+ { \bf z}$, 
${\bf t}={\bf w}+ {\bf x}+ {\bf y}$, 
${\bf u}={\bf w}+ {\bf x}+ {\bf z}$, 
${\bf v}={\bf x}+ {\bf y}+ {\bf z}$ and 
the vectors in the first, second, third and fourth rows are concatenated with $(0,0)$, $(0,1)$, $(1,0)$ and $(1,1)$, respectively. 

Next, let $H'$ be the $4\times 8$ array formed by replacing each vector ${\bf a}$ in $H$  with the vector ${\bf a}+({\bf 1}\oplus (0,0))$. 
We next arrange 
the entries in each column of $[H|H']$. 
We mark the elements of $H$ as follows:

$$
H=\begin{array}{|c|c|c|c|c|c|c|c|}
\hline
\underline{{\bf w}} & {\bf x}^{\circ} & {\bf y}^{\ast}  & {\bf z}  & {\bf t} & {\bf u}^{\ast}  & {\bf s}^{\circ}  & \underline{{\bf v}} \\  
\hline
{\bf x}^{\ast} & {\bf w} & \underline{{\bf t}} & {\bf u}^{\circ}  & {\bf y}^{\circ}  & \underline{{\bf z}} & {\bf v} & {\bf s}^{\ast} 
\\
\hline
{\bf y} & {\bf t}^{\ast} & {\bf w}^{\circ}  & \underline{{\bf s}} & \underline{{\bf x}} & {\bf v}^{\circ}  & {\bf z}^{\ast}  & {\bf u}
\\
\hline
{\bf z}^{\circ}  & \underline{{\bf u}} & {\bf s} & 
{\bf w}^{\ast}  & 
{\bf v}^{\ast}  & {\bf x} & \underline{{\bf y}} & {\bf t}^{\circ} 
\\
\hline
\end{array}.$$
Next, rearrange the elements in each column of $H$ so that elements with the same mark are in the same row, with a corresponding permutation applied to each column of $H'$. Let the resultant $4\times 16$ matrix be $J$. 

Now, replace each vector of the form 
${\bf a}$ in $J$
with the vector $({\bf v}\oplus(0,0)) + {\bf a}$  to obtain a $4\times 16$ matrix
$J'$. 
Observe that for each row of $K=[J|J']$ and for each ${\bf g}\in G=<{\bf w}, {\bf x}, {\bf y}, {\bf z},{\bf 1}>$, there exists $i$ and $j$ such that 
${\bf g}\oplus (i,j)$ is in that row. 
Moreover, each column from $K$ is a column from  
$C\boxplus R$ with elements permuted. 

Let $G$, ${\bf z}_0+G,\dots ,{\bf z}_{\alpha-1}+G$ be the cosets of $G$ in 
$({\mathbb F}_2)^k$, where $\alpha=2^{k-5}$.  
For each $i\in [\alpha]$, let $K_i$ be formed from 
$K$ by replacing each entry  ${\bf a}$ of  $K$
with $({\bf z}_i\oplus (0,0))+{\bf a}$. 

Then, observe that $[K_0|K_1|\dots |K_{\alpha}]$ can be formed from $C\boxplus R$ by permuting the elements in each column. 
Moreover, the resultant rows each now satisfy the conditions of Lemma \ref{strength2}. 
\end{proof}

\begin{example}
Let ${\bf w} = 10000, {\bf x} = 10111, {\bf y}= 01101,$ and $ {\bf z} = 01011$. Then $H$ and $H'$ in the proof of above lemma are as follows:

\begin{table}[H]
	\centering
			\renewcommand{\arraystretch}{1.3}
			$H = $ 
			\resizebox{12cm}{!}{%
			\begin{tabular}{c}
			$\begin{array}{|c|c|c|c| c|c|c|c|}
			    \hline
            	\underline{1000000}&	1011100^{\circ}&	0110100^{\ast}&	0101100&	0101000&	0110000^{\ast}&	1011000^{\circ}&	\underline{1000100}\\\hline
                1011101^{\ast}&	1000001&	\underline{0101001}&	0110001^{\circ}&	0110101^{\circ}&	\underline{0101101}&	1000101&	1011001^{\ast}\\\hline
                0110110&	0101010^{\ast}&	1000010^{\circ}&	\underline{1011010}&	\underline{1011110}&	1000110^{\circ}&	0101110^{\ast}&	0110010\\\hline
                0101111^{\circ}&	\underline{0110011}&	1011011&	1000011^{\ast}&	1000111^{\ast}&	1011111&	\underline{0110111}&	0101011^{\circ}\\\hline
            \end{array} $ 
            \end{tabular}%
			}
\end{table}

\begin{table}[H]
	\centering
			\renewcommand{\arraystretch}{1.3}
			$H' =$
			\resizebox{12cm}{!}{%
			\begin{tabular}{c}
			$\begin{array}{|c|c|c|c| c|c|c|c|}
			    \hline
            	\underline{0111100}&	0100000^{\circ}&	1001000^{\ast}&	1010000&	1010100&	1001100^{\ast}&	0100100^{\circ}&	\underline{0111000}\\\hline
                0100001^{\ast}&	0111101&	\underline{1010101}&	1001101^{\circ}&	1001001^{\circ}&	\underline{1010001}&	0111001&	0100101^{\ast}\\\hline
                1001010&	1010110^{\ast}&	0111110^{\circ}&	\underline{0100110}&	\underline{0100010}&	0111010^{\circ}&	1010010^{\ast}&	1001110\\\hline
                1010011^{\circ}&	\underline{1001111}&	0100111&	0111111^{\ast}&	0111011^{\ast}&	0100011&	\underline{1001011}&	1010111^{\circ}\\\hline 
            \end{array}$
            \end{tabular}%
			} .
\end{table}

\end{example}

\begin{corollary}
Let $G = \textup{OA}(4m,k+2,2,2)$ 
be an orthogonal array such that the rows partition into sets of $4$ vectors of the form 
$$\{{\bf w}\oplus (0,0), {\bf x}\oplus (0,1), {\bf y}\oplus (1,0), 
{\bf z}\oplus(1,1)\}$$  
where $\{{\bf w}, {\bf x}, {\bf y}, {\bf z},{\bf 1}\}$ is a linearly independent set. 
Let $R$ be the set of $2^k$ vectors with $0$ in the final two positions. 
Then the elements in each column of $G\boxplus R$ can be rearranged to create an $I_{k+2}(4m,2^k,2,2)$. 
\label{yesyes}
\end{corollary}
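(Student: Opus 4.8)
The plan is to reduce Corollary \ref{yesyes} to the previous lemma by partitioning the rows of $G$ appropriately. First I would fix an enumeration of the row-partition of $G$ guaranteed by the hypothesis: say the rows of $G$ split into $m$ blocks $B_1,\dots,B_m$, where block $B_\ell$ consists of the four vectors ${\bf w}_\ell\oplus(0,0)$, ${\bf x}_\ell\oplus(0,1)$, ${\bf y}_\ell\oplus(1,0)$, ${\bf z}_\ell\oplus(1,1)$ with $\{{\bf w}_\ell,{\bf x}_\ell,{\bf y}_\ell,{\bf z}_\ell,{\bf 1}\}$ linearly independent in $({\mathbb F}_2)^k$. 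Reordering the rows of $G$ (an allowed equivalence) I arrange these blocks as consecutive groups of four rows. Since $G\boxplus R$ is defined row-by-row from the first column $G$, the $4\times 2^k$ horizontal strip of $G\boxplus R$ corresponding to block $B_\ell$ is exactly the array $C_\ell\boxplus R$ appearing in the previous lemma, with $C_\ell=({\bf w}_\ell\oplus(0,0),{\bf x}_\ell\oplus(0,1),{\bf y}_\ell\oplus(1,0),{\bf z}_\ell\oplus(1,1))^T$.

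Next I would apply the previous lemma to each strip independently. For each $\ell$, the lemma provides a rearrangement of the entries within each column of $C_\ell\boxplus R$ so that each of the four resulting rows is an OA$(2^k,k+2,2,2)$. Crucially, this rearrangement only permutes entries within columns and within the strip, so performing it simultaneously on all $m$ strips is a legitimate column-wise rearrangement of the whole array $G\boxplus R$; call the result $L$. By construction every row of $L$ is an OA$(2^k,k+2,2,2)$, so $L$ has strength $2$ along rows. For the columns, note that column-wise rearrangement does not change the multiset of entries in any column, and each column of $G\boxplus R$ is (a permutation of) a coset of the column span issue — more directly, each column of $G\boxplus R$ already has the property that its entries, restricted to any two of the $k+2$ coordinates, form a balanced set, because $G$ is an orthogonal array of strength $2$ and adding a fixed vector ${\bf v}\oplus(0,0)$ (the relevant row of $R$ is ${\bf 0}$, but the column index contributes a fixed shift) preserves strength; rearranging entries within the column leaves this multiset unchanged, so every column of $L$ is still an OA$(4m,k+2,2,2)$.

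Finally I would check that $L$ is a genuine row-column factorial design, i.e.\ that the multiset of all $4m\cdot 2^k$ entries of $L$ is $\lambda\times[2]^{k+2}$ with $\lambda=4m\cdot 2^k/2^{k+2}=m$. Since rearranging entries within columns does not change the global multiset, it suffices to verify this for $G\boxplus R$ itself; but $G\boxplus R$ has first column $G$ and first row $R$, each row is a translate of $R=\{{\bf v}\oplus(0,0):{\bf v}\in[2]^k\}$ by the corresponding entry of $G$, and as ${\bf v}$ ranges over $[2]^k$ and we add the $(i,j)$-suffix data appropriately, one sees each element of $[2]^{k+2}$ of the two ``available'' suffix-types occurs, and counting over the four rows of each block and over all $m$ blocks gives exactly $m$ occurrences of every vector in $[2]^{k+2}$. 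Combining strength $2$ along rows, strength $2$ along columns, and the factorial-design count yields that $L$ is an $I_{k+2}(4m,2^k,2,2)$.

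The main obstacle is bookkeeping rather than mathematics: one must be careful that the per-strip rearrangements supplied by the previous lemma are indeed independent across strips and really are column-internal permutations, so that they compose into a single valid transformation of $G\boxplus R$; and one must confirm that the columns of $G\boxplus R$ retain strength $2$ after this purely internal shuffling, which follows because the strength-$2$ property of a single column depends only on the multiset of its entries. Once these two points are nailed down, the corollary is an immediate assembly of the previous lemma over the blocks of the partition together with Lemma \ref{strength2}'s guarantee applied row-wise.
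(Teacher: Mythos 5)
Your proposal is correct and follows essentially the same route as the paper: reduce the row-strength claim to the previous lemma applied strip-by-strip to each block of four rows, observe that column strength and the global multiset are unaffected by within-column permutations, and verify the factorial-design property via the fact that each of the four suffixes $(i,j)$ occurs $m$ times among the last two coordinates of the rows of $G$. The only cosmetic difference is that the paper packages this last verification as an application of Theorem \ref{bigdeal1} with $A=[I|\mathbf{0}]$ and $A^{\perp}=[\mathbf{0}|I]^{T}$ (so that $GA^{\perp}$ is the last two columns of $G$), whereas you carry out the equivalent coset count directly; note also your phrase ``the two available suffix-types'' should read that each block contributes one row of each of the four suffix types.
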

	
\begin{proof}
From the previous lemma it suffices to check that $G\boxplus R$ is a factorial design. Let $A$ be a $k \times (k+2) $ matrix of the form $ [I | {\bf 0}]$. Observe that $R = <A>$.
The nullspace of $A$ is generated by the columns of $A^\perp=[{\bf 0}|I]^T$. 
Thus the columns of $GA^\perp$ are the last two columns of $G$ which are by definition orthogonal. 
The result then follows from 
 Theorem $\ref{bigdeal1}$. 
\end{proof}	
	
	\begin{corollary}
There exists $I_7(12,32,2,2)$. 
\label{yesyes2}
\end{corollary}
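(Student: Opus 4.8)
The plan is to apply Corollary~\ref{yesyes} with $m=3$ and $k=5$, so that $k+2=7$ and $2^k=32$ give the desired parameters $I_7(12,32,2,2)$. To do this, I need to exhibit an orthogonal array $G=\textup{OA}(12,7,2,2)$ whose $12$ rows partition into three blocks of four rows, each block having the form $\{{\bf w}\oplus(0,0),\ {\bf x}\oplus(0,1),\ {\bf y}\oplus(1,0),\ {\bf z}\oplus(1,1)\}$ for some linearly independent set $\{{\bf w},{\bf x},{\bf y},{\bf z},{\bf 1}\}$ of binary vectors of dimension $5$ (so the last two coordinates of the seven encode the block structure, and the first five coordinates carry the genuine information). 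Once such a $G$ is in hand, Corollary~\ref{yesyes} immediately produces the required row-column factorial design.

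First I would recall that an $\textup{OA}(12,7,2,2)$ exists: take the Hadamard matrix $H(12)$ in normalized form, delete the first column, replace $-1$ by $0$, and select seven of the remaining eleven columns (cf.\ Lemma~\ref{haddd} and the discussion following it; compare Example~\ref{eggstwo}, which already uses columns of $H(12)$). The subtlety is not mere existence but the structural requirement: the rows must split into the prescribed coset-like quadruples. So the real work is a search over column selections and row relabelings of $H(12)$ — equivalently, over the choice of which two coordinates play the role of the trailing $(i,j)$ pair — to find one for which the four rows carrying each fixed value pattern in the two "flag" coordinates reduce, after deleting those two coordinates, to a set $\{{\bf w},{\bf x},{\bf y},{\bf z}\}$ with $\{{\bf w},{\bf x},{\bf y},{\bf z},{\bf 1}\}$ linearly independent in ${\mathbb F}_2^5$. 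Concretely I would present an explicit $12\times 7$ array, verify by inspection that every pair of its columns is orthogonal (each pair of columns has the four patterns $00,01,10,11$ each appearing three times), verify the row partition into the three quadruples, and check the $5\times 5$ independence condition for each quadruple by a small determinant/rank computation over ${\mathbb F}_2$.

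The main obstacle I anticipate is purely the bookkeeping of producing and certifying the explicit array: there is no conceptual difficulty once Corollary~\ref{yesyes} is granted, but one must actually locate a column selection of $H(12)$ (or build an $\textup{OA}(12,7,2,2)$ by hand) that simultaneously satisfies the orthogonality of all $\binom{7}{2}=21$ column pairs and the three independence conditions. Since $12=4\cdot 3$ and the last two coordinates must themselves form an $\textup{OA}(12,2,2,2)$ with each of $00,01,10,11$ occurring three times, the block sizes are forced, which constrains the search and makes a direct construction feasible. I would therefore simply display the resulting $G$ together with the three linearly independent $5$-tuple systems, cite Corollary~\ref{yesyes}, and conclude the existence of $I_7(12,32,2,2)$.
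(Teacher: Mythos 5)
Your proposal is correct and is essentially identical to the paper's proof: the paper also invokes Corollary~\ref{yesyes} with $k=5$ and simply exhibits an explicit OA$(12,7,2,2)$ (Table~\ref{tabbleOA12}) whose rows split into three quadruples $\{{\bf w}\oplus(0,0),{\bf x}\oplus(0,1),{\bf y}\oplus(1,0),{\bf z}\oplus(1,1)\}$ with each $\{{\bf w},{\bf x},{\bf y},{\bf z},{\bf 1}\}$ linearly independent over $\mathbb{F}_2$. The only thing your write-up defers is the explicit $12\times 7$ witness array, which is the entire content of the paper's proof; producing and certifying it is the finite search you correctly identify as feasible.
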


\begin{proof}
We present an orthogonal array of type OA$(12,7,2,2)$ in Table \ref{tabbleOA12}, that satisfies the conditions of Corollary \ref{yesyes}.
The dashed lines partition the rows into three sets of the form
$\{{\bf w}\oplus (0,0), {\bf x}\oplus (0,1), {\bf y}\oplus (1,0), 
{\bf z}\oplus(1,1)\}$ such that in each case  
$ \{ {\bf w}, {\bf x}, {\bf y}, {\bf z}, {\bf 1} \} $
is linearly independent. 

\begin{table}[H]
	\centering
			\renewcommand{\arraystretch}{1.2}
			$$ \left( \begin{array}{ccccc : cc}
            	1 & 0 & 0 & 0 & 0 & 0 & 0 \\
            	0 & 1 & 1 & 0 & 1 & 0 & 1 \\
            	0 & 1 & 0 & 1 & 1 & 1 & 0 \\
            	1 & 0 & 1 & 1 & 1 & 1 & 1 \\\hdashline
            	1 & 0 & 1 & 1 & 1 & 0 & 0 \\
            	0 & 0 & 0 & 0 & 1 & 0 & 1 \\
            	0 & 0 & 1 & 0 & 0 & 1 & 0 \\
            	1 & 1 & 1 & 0 & 0 & 1 & 1 \\\hdashline
            	0 & 1 & 1 & 1 & 0 & 0 & 0 \\
            	1 & 1 & 0 & 1 & 0 & 0 & 1 \\
            	1 & 1 & 0 & 0 & 1 & 1 & 0 \\
            	0 & 0 & 0 & 1 & 0 & 1 & 1 \\
            \end{array} \right)$$ %
			\caption{An orthogonal array of type OA$(12,7,2,2)$.}
			\label{tabbleOA12}
\end{table}
\end{proof}


We next generalize the above ideas to the case where linear independence is not assumed. 

\begin{lemma}
Let $m=2^{\alpha}$ where $\alpha\geq 2$ and 
$({\mathbb F}_2)^{\alpha} =\{{\bf e}_i\mid i\in [m]\}$. That is, label the binary vectors of dimension $\alpha$
with ${\bf e}_0$, ${\bf e}_1, \dots {\bf e}_{m-1}$. 
Let ${\bf v}_i$, $i\in [m]$ be any vectors (possibly non-distinct) of 
dimension $k\geq \alpha+m+1$.
Consider the $m\times 2^k$ array of vectors of dimension $k+\alpha$ given by $C\boxplus R$, 
where the rows of $R$ 
are the set of $2^k$ vectors with $0$ in the final $\alpha$ positions and the $i$th row of $C$ is given by 
${\bf v}_i\oplus {\bf e}_i$, where $ i\in [m]$.  
Then the elements in each column of $C\boxplus R$ can be rearranged so that the vectors in each row form an OA$(2^k,k+\alpha,2,2)$. 
\end{lemma}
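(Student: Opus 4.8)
The plan is to generalise the argument from the previous lemma (the $m=4$, linearly independent case) so that it still works when $m = 2^\alpha$ is a power of $2$ and the vectors ${\bf v}_i$ need not be distinct or independent. The core structure is the same: we start with $C\boxplus R$, which already has the property that each column is a coset of $\langle A\rangle$ for $A=[I\mid {\bf 0}]$ and each vector of $({\mathbb F}_2)^k$ appears (suitably tagged in the final $\alpha$ coordinates) in every row; what we lack is strength $2$ \emph{within} a row. So the only thing to achieve by rearranging each column is to massage the rows into shape so that Lemma \ref{strength2} (its obvious $\alpha$-coordinate analogue) applies. First I would record that analogue: if a set $S$ of $2^k$ binary vectors of dimension $k+\alpha$ has the properties that (i) every ${\bf v}\in [2]^k$ occurs as ${\bf v}\oplus {\bf e}$ for some $\alpha$-vector ${\bf e}$, (ii) each ${\bf e}\in[2]^\alpha$ is used exactly $2^{k-2}$ times, and (iii) ${\bf v}\oplus {\bf e}\in S \iff ({\bf 1}+{\bf v})\oplus {\bf e}\in S$, then $S$ is the row set of an OA$(2^k,k+\alpha,2,2)$; this is just the definition of strength $2$ together with the complementation symmetry, exactly as in Lemma \ref{strength2}.

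Next I would carve $C\boxplus R$ into blocks along the rows of $R$. Since $k\geq \alpha+m+1$, pick any linearly independent set of $m$ vectors ${\bf u}_1,\dots,{\bf u}_m$ in $({\mathbb F}_2)^k$ that together with ${\bf 1}$ is still independent (possible as $m+1\le k$), and let $G=\langle {\bf u}_1,\dots,{\bf u}_m,{\bf 1}\rangle$. The group $({\mathbb F}_2)^k$ splits into $2^{k-m-1}$ cosets of $G$. Inside one coset one has a small $m\times (\text{something})$ subarray, and here I would run exactly the bookkeeping of the preceding lemma: on the columns indexed by $G$ (and its ${\bf 1}$-translate) the entries of $C\boxplus R$ are, row by row, the elements $\{{\bf v}_i + {\bf g}\oplus {\bf e}_i : i\in[m]\}$; I want to permute entries \emph{within each column} so that, after the permutation, each row contains, for every ${\bf g}\in G$, exactly one vector of the form ${\bf g}\oplus {\bf e}$, and each ${\bf e}\in[2]^\alpha$ is hit the right number of times, and the complementation symmetry (iii) holds on that block. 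Because the column entries in a block are a transversal-type collection, the required per-column rearrangement is a matching/Hall-type statement; the ${\bf 1}$-pairing of columns (as in the passage from $H$ to $H'$ in the previous proof) is what delivers symmetry (iii). Doing this consistently on all $2^{k-m-1}$ cosets and concatenating yields the claimed OA in each row.

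The step I expect to be the main obstacle is precisely the combinatorial heart that the previous lemma handled by the explicit ``marking'' trick ($\underline{\ \cdot\ }$, ${}^{\circ}$, ${}^{\ast}$): showing that one can simultaneously, in every column of a block, permute the $m$ entries so that (a) the $k$-dimensional parts realise each residue class the correct number of times in each row, and (b) this is compatible with the global ${\bf 1}$-complementation used to force property (iii). When the ${\bf v}_i$ are independent this is a clean linear-algebra count; without independence some ${\bf v}_i+{\bf g}$ may coincide across rows, and one must check that the multiplicities still balance — this is where the hypothesis $m=2^\alpha$ (so that the $\alpha$-tags ${\bf e}_i$ range over a whole vector space and can be permuted freely) and the slack $k\ge \alpha+m+1$ are used. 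Once that matching is established, closing the proof is routine: concatenate the rearranged blocks, verify (i)–(iii) of the strength-$2$ lemma hold for each row, and invoke it. I would also remark that the only place ``$2^k$'' (rather than an arbitrary multiple) matters is that $R$ must be all of $({\mathbb F}_2)^k$ so that the coset decomposition of $G$ tiles it exactly.
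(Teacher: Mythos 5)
Your overall architecture (reduce to the $\alpha$-coordinate analogue of Lemma \ref{strength2}, decompose $({\mathbb F}_2)^k$ into cosets of a subgroup $G$, handle one coset block at a time and concatenate) matches the paper's, but there is a genuine gap at exactly the point you flag as ``the main obstacle,'' and your choice of $G$ is what makes that obstacle real rather than avoidable. You take $G$ to be the span of an arbitrary independent set $\{{\bf u}_1,\dots,{\bf u}_m,{\bf 1}\}$ unrelated to the ${\bf v}_i$. With that choice, the entries of row $i$ on the columns indexed by a coset ${\bf z}_j+G$ are $({\bf v}_i+{\bf z}_j+G)\oplus {\bf e}_i$, so the $k$-dimensional parts form \emph{different} cosets for different rows $i$ whenever ${\bf v}_i+{\bf v}_{i'}\notin G$. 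A within-column permutation of that block must then simultaneously control which coset each row sees and how the $\alpha$-tags are distributed, and you reduce this to an unproved ``matching/Hall-type statement.'' That statement is essentially the entire content of the lemma; nothing in your plan verifies that the required simultaneous rearrangement exists, and the permitted non-distinctness of the ${\bf v}_i$ is precisely what defeats a naive transversal count.

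The paper's proof sidesteps any matching by choosing $G=\langle\{{\bf v}_i : i\in[m]\}\cup\{{\bf 1}\}\rangle$, of order $2^\ell$ with $\ell\leq m+1$. Because ${\bf v}_i\in G$, every row of the base block $C\boxplus R_0$ (columns indexed by $G$) contains exactly the set $G$ in its first $k$ coordinates, tagged with the single label ${\bf e}_i$; the rows differ only in their tag. The rearrangement is then just a cyclic shift: translate the base block to the coset ${\bf z}_j+G$ and cyclically permute the entries of its columns by $j$ places, so that as $j$ runs over the $\beta=2^{k-\ell}$ cosets each row receives each tag ${\bf e}_i$ exactly $\beta/m$ times (using $m=2^{\alpha}\mid\beta$, which follows from $k\geq \alpha+m+1\geq \alpha+\ell$), hence $2^{k-\alpha}$ vectors per tag per row. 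Properties (a)--(c) are then immediate, with the complementation symmetry (c) coming from ${\bf 1}\in G$. If you replace your $G$ with this one, your plan closes; as written, it does not.
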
		

\begin{proof}
Let ${\bf e}_0={\bf 0}$ be the first row of $R$. 
Let $G$ be the subgroup generated by the set of vectors $\{{\bf v}_i: i\in [m]\}\cup \{{\bf 1}\}$. Then $|G|=2^{\ell}$ for some $1\leq \ell\leq m+1$.  
Let $G=\{{\bf g}_i\mid i\in [2^\ell]\}$ where
${\bf g}_0={\bf 0}$. 
Define a $2^\ell\times (k+\alpha)$ array $R_0$ so that row $i$ of $R_0$ is 
${\bf g}_i\oplus {\bf 0}$, $i\in [2^{\ell}]$.  Let $K_0$ be the 
$m\times 2^{\ell}$ array of vectors of dimension $k+\alpha$ given by
$C\boxplus R_0$. 

Next, 
let ${\bf z}_0+G$, ${\bf z}_1+G,\dots ,{\bf z}_{\beta-1}+G$ be the cosets of $G$ in 
$({\mathbb F}_2)^k$, where $\beta=2^{k-\ell}$.  
For each $j\in [\beta]$, let $K_{j}$ be formed from 
$K_0$ by adding ${\bf z}_{j}\oplus {\bf 0}$ to each vector in $K$. 
Next, cyclically permute the elements in each column of $K_{j}$ by $j$ places (modulo $m$) to create $K_j'$.

Note that $L=[K_0'|K_1'|\dots |K_{\beta-1}']$ is equal to $C\boxplus R$ after a permutation of the elements in each column. 
Moreover, let $S$ be the set of vectors of dimension $k+\alpha$ that occur in a given row of $L$.
Then we claim the following: 
\begin{enumerate}
    \item[(a)] For each ${\bf w}\in [2]^k$, the vector ${\bf w}\oplus {\bf e}_i\in S$, for some ${\bf e}_i\in ({\mathbb F}_2)^{\alpha}$;
    \item[(b)]  For each ${\bf e}_i\in ({\mathbb F}_2)^{\alpha}$, 
there are precisely $2^{k-\alpha}$ vectors in $S$ of the form 
${\bf w}\oplus {\bf e}_i$ for some ${\bf w}$; 
    \item[(c)] For each ${\bf e}_i\in ({\mathbb F}_2)^{\alpha}$, the vector ${\bf w}\oplus {\bf e}_i\in S$ if and only if 
    $({\bf w}+{\bf 1})\oplus {\bf e}_i\in S$.
\end{enumerate}
Similarly to Lemma \ref{strength2}, if this claim is true, it follows that 
the set of vectors in $S$ form the rows of an orthogonal array of strength $2$. So it suffices to show that the above claim is true. 

To see (a), let $j\in [\beta]$. Observe that in every row of $K_j$ and for every element ${\bf w}\in {\bf z}_j+G$, the vector ${\bf w}\oplus {\bf e}_i$ occurs in that row. The same property holds for $K_j'$. Next, from the conditions of the lemma, $\beta\geq m$; indeed $m$ divides $\beta$. Thus (b) is true. Finally (c), is true because ${\bf 1}\in G$.
\end{proof}

\begin{corollary} 
Let $\alpha\geq 2$ and 
$2^{\alpha}+\alpha+ 1\leq k$.
 Suppose there exists an  
OA$(2^{\alpha}b,k+{\alpha},2,2)$
such that the last $\alpha$ columns 
are an OA$(2^{\alpha}b,{\alpha},2,\alpha)$. 
 Then $I_{k+\alpha}(2^k,2^{\alpha}b,2,2)$ exists.
 \label{bigcases}
\end{corollary}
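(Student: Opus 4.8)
The plan is to apply the preceding lemma (the one immediately above Corollary~\ref{bigcases}, with $m=2^{\alpha}$) together with Theorem~\ref{bigdeal1}, in the same spirit as Corollary~\ref{yesyes}. First I would take the hypothesised $G=\textup{OA}(2^{\alpha}b,k+\alpha,2,2)$ whose last $\alpha$ columns form an $\textup{OA}(2^{\alpha}b,\alpha,2,\alpha)$. The strength-$\alpha$ condition on those last $\alpha$ columns says precisely that, reading them row by row, each of the $2^{\alpha}$ binary vectors of dimension $\alpha$ occurs exactly $b$ times. Hence I can index the rows of $G$ so that they partition into $b$ blocks of $2^{\alpha}$ rows each, where within every block the last $\alpha$ coordinates of the rows run through all of $(\mathbb{F}_2)^{\alpha}=\{{\bf e}_i : i\in[m]\}$, say the row with suffix ${\bf e}_i$ having prefix ${\bf v}_i^{(r)}$ for block $r$. (If one wants a single $C$, concatenate: stack the $b$ blocks, so $C$ has rows ${\bf v}_i^{(r)}\oplus {\bf e}_i$; but it is cleaner to argue block-by-block and glue with Lemma~\ref{lem:glueing} at the end, since the lemma above is stated for a single $m\times 2^k$ array with $m=2^{\alpha}$.)

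Next, for one such block, let $C$ be the $m\times(k+\alpha)$ array whose $i$th row is ${\bf v}_i\oplus{\bf e}_i$ and let $R$ be the $2^k\times(k+\alpha)$ array of all binary vectors with $0$ in the last $\alpha$ coordinates. Since $k\geq 2^{\alpha}+\alpha+1=\alpha+m+1$, the hypothesis of the lemma above is met, so the entries in each column of $C\boxplus R$ can be rearranged so that every row is an $\textup{OA}(2^k,k+\alpha,2,2)$. It remains to check that the underlying multiset of cell-entries of $C\boxplus R$ is a factorial design, because the column rearrangement does not change that multiset. For this I invoke Theorem~\ref{bigdeal1}: here $R=\langle A\rangle$ with $A=[I\mid{\bf 0}]$ the $k\times(k+\alpha)$ matrix, so $A^{\perp}=[{\bf 0}\mid I]^{T}$, and $CA^{\perp}$ is exactly the submatrix of $C$ on its last $\alpha$ columns, namely the rows ${\bf e}_i$, $i\in[m]$ — which is an $\textup{OA}(m,\alpha,2,\alpha)$ by construction. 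By Theorem~\ref{bigdeal1}, $C\boxplus R$ is an $m\times 2^k$ row-column factorial design, i.e.\ its entries form the $2^{k+\alpha}$-full factorial with replication $m\cdot 2^k/2^{k+\alpha}=1$. Combined with the row/column strength-$2$ property from the lemma, the rearranged array is an $I_{k+\alpha}(m,2^k,2,2)=I_{k+\alpha}(2^{\alpha},2^k,2,2)$.

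Finally, doing this for each of the $b$ blocks $r=1,\dots,b$ produces $b$ arrays of type $I_{k+\alpha}(2^{\alpha},2^k,2,2)$; gluing them along the ``rows'' direction via Lemma~\ref{lem:glueing} yields an $I_{k+\alpha}(2^{\alpha}b,2^k,2,2)$, and transposing gives $I_{k+\alpha}(2^k,2^{\alpha}b,2,2)$, as required. (The passage from $I_k(m,n,q,t)$ to $I_k(n,m,q,t)$ is immediate from the definition, since rows and columns play symmetric roles.) The only genuinely delicate point is the bookkeeping that the strength-$\alpha$ hypothesis on the last $\alpha$ columns of $G$ is exactly what makes each block's suffixes a complete, once-each listing of $(\mathbb{F}_2)^{\alpha}$, so that the block array matches the hypothesis format of the lemma verbatim; everything else is a direct citation of Theorem~\ref{bigdeal1}, the lemma above, and Lemma~\ref{lem:glueing}. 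I do not expect a real obstacle — this is essentially the non-linear analogue of Corollary~\ref{yesyes} with $m=2^{\alpha}$ in place of $4$.
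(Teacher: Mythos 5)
Your overall architecture --- $A=[I\mid\mathbf{0}]$, $A^{\perp}=[\mathbf{0}\mid I]^{T}$, Theorem~\ref{bigdeal1} for the factorial-design property, the preceding lemma for rearranging entries within columns, and a block decomposition of $G$ governed by the strength-$\alpha$ condition on the last $\alpha$ columns --- is the same as the paper's. But the way you package the blocks introduces a genuine gap. You assert that each block yields a standalone array of type $I_{k+\alpha}(2^{\alpha},2^{k},2,2)$ and then glue $b$ of these with Lemma~\ref{lem:glueing}. Such an array cannot exist: its columns would have to be orthogonal arrays OA$(2^{\alpha},k+\alpha,2,2)$, and since $k\geq 2^{\alpha}+\alpha+1$ we have $k+\alpha>2^{\alpha}-1$, violating the Bose--Bush bound of Lemma~\ref{2strong}. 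Concretely, a single block $C_i$ consists of $2^{\alpha}$ rows of $G$ chosen only so that their last $\alpha$ coordinates exhaust $(\mathbb{F}_2)^{\alpha}$; nothing forces $C_i$ to be an orthogonal array of strength $2$ on all $k+\alpha$ columns, and with so few rows it cannot be. The ``row/column strength-$2$ property from the lemma'' that you invoke does not exist on the column side: the lemma only repairs the rows.

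The paper avoids this by never treating a block as a self-contained design. It applies Theorem~\ref{bigdeal1} once to the whole $C=G$ (all $2^{\alpha}b$ rows), so that $C\boxplus R$ is a factorial design and its columns --- each containing all $2^{\alpha}b$ translated rows of $C$ --- have strength $2$ because $C$ does. The block decomposition is used only afterwards, to permute entries within each column (block by block, via the lemma with $m=2^{\alpha}$) so that every row becomes strength $2$; these permutations do not disturb the column multisets or the overall factorial design. If you reorganize your argument in this order, the remaining details you wrote go through.
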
	
	
\begin{proof}
We shall construct the transpose design 
$I_{k+\alpha}(2^{\alpha}b,k,2,2)$. 
Let $C$ be an OA$(2^{\alpha}b,k+\alpha,2,2)$
such that the last $\alpha$ columns 
are an OA$(2^{\alpha}b,{\alpha},2,\alpha)$. 
Note that $C$ can be partitioned into 
subarrays $C_0$, $C_1,\dots ,C_{b-1}$, each of dimension $2^{\alpha}\times (k+{\alpha})$, such that 
for each $i\in [b]$, 
the last 
${\alpha}$ columns of $C_i$ contain each element of $({\mathbb F}_2)^{\alpha}$ exactly once.  

Next, let $A$ be the $k \times (k+\alpha) $ matrix of the form 
$ [I|{\bf 0}]$ and let $R$ be an orthogonal array whose rows are the elements of $<A>$.
Observe that the rows of $R$ are the set of $2^k$ vectors with $0$ in the final $\alpha$ positions. 
Moreover, the nullspace of $A$ is generated by the columns of $A^\perp=[{\bf 0}|I]^T$.
Thus the columns of $CA^\perp$ are the last ${\alpha}$ columns of $C$. 

Thus, by Theorem $\ref{bigdeal1}$, $C\boxplus R$ is a row-column factorial design. Moreover, the columns of $C\boxplus R$ are each of strength $2$ since $C$ is of strength $2$. 
Finally, apply the previous lemma with $m=2^{\alpha}$ to rearrange the elements in each column of subarray $C_i\boxplus R$ so that 
$C\boxplus R$ becomes an  $I_{k+\alpha}(2^{\alpha}b,2^k,2,2)$. 
\end{proof}	

The following lemma uses a standard doubling technique. 

\begin{lemma}
Suppose there exists a Hadamard matrix of order $4b$ for some integer $b$. Let $\alpha\geq 2$. Then there exists an OA$(2^{\alpha}b,k+\alpha,2,2)$ such that the final $\alpha$ columns form an OA$(2^{\alpha}b,\alpha,2,\alpha)$, for any $\alpha\leq k+\alpha < 2^{\alpha}b$.   
\end{lemma}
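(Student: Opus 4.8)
The plan is to start from a Hadamard matrix $H$ of order $4b$ and build the required orthogonal array in two stages: first produce an OA$(4b, 4b-1, 2, 2)$ by normalizing $H$, deleting the first column, and replacing $-1$ by $0$ (exactly the construction recalled after Lemma \ref{haddd}); then repeatedly apply a doubling step to pass from an OA$(2^j b, \cdot, 2, 2)$ to an OA$(2^{j+1} b, \cdot, 2, 2)$, each time appending one new column that records which of the two copies a row came from. After $\alpha-2$ doublings we reach size $2^\alpha b$, and the $\alpha-2$ appended columns, together with one all-$0$/all-$1$ column already available from the first doubling, will be arranged to form the final $\alpha$ columns.

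More concretely, suppose $D$ is an OA$(n, c, 2, 2)$ with $n = 2^j b$. Form $D' = \left[\begin{smallmatrix} D & \mathbf{0} \\ D & \mathbf{1}\end{smallmatrix}\right]$, a $2n \times (c+1)$ array: the top half is $D$ with a column of $0$'s appended, the bottom half is $D$ with a column of $1$'s appended. One checks directly that $D'$ is an OA$(2n, c+1, 2, 2)$: any pair of the original $c$ columns still sees each of the four patterns $\tfrac{2n}{4}$ times (each of the two copies of $D$ contributes $\tfrac{n}{4}$); and a pair consisting of an original column and the new last column sees $00,01$ exactly $\tfrac n2$ times each in the top half and $10,11$ exactly $\tfrac n2$ times each in the bottom half, since each original column of $D$ is balanced (strength $1$, which follows from strength $2$). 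Iterating this $\alpha-1$ times starting from the OA$(4b,4b-1,2,2)$ yields an OA$(2^{\alpha+1} b, 4b-1+(\alpha-1), 2, 2)$ — but I only need size $2^\alpha b$, so I iterate $\alpha-2$ times to get an OA$(2^\alpha b, 4b-2+\alpha, 2, 2)$, having appended $\alpha-2$ new columns. I then truncate to any desired number of columns $k+\alpha$ with $\alpha \le k+\alpha < 2^\alpha b$ (deletion of columns preserves strength $2$), making sure to \emph{keep} all $\alpha-2$ appended columns plus two further columns engineered to complete the block: indeed, I should also keep the all-$1$ column that appears naturally (or use the fact that the first doubling column together with the $4b-1$ Hadamard-derived columns can be chosen so the relevant $\alpha$ columns are independent). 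The key point is that the final $\alpha$ columns must form an OA$(2^\alpha b, \alpha, 2, \alpha)$, i.e.\ each of the $2^\alpha$ binary $\alpha$-tuples occurs exactly $b$ times among the rows. The $\alpha-2$ doubling columns already take $2^{\alpha-2}$ distinct values the right number of times by construction; I need two more columns from inside each block of $4b$ rows that, restricted to that block, realize each of the four $2$-tuples exactly $b$ times — and a normalized Hadamard matrix of order $4b$ provides two such columns (any two non-constant columns of $H$ are orthogonal, hence balanced pairwise). Careful bookkeeping of which columns are "moved to the end" finishes the argument; the column ordering is immaterial up to the equivalence of Section 1.

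The main obstacle I expect is the bookkeeping at the boundary: making the counts $2^\alpha b / 2^\alpha = b$ come out exactly right for the full block of $\alpha$ trailing columns simultaneously — the $\alpha-2$ doubling columns are "global" (their values are determined by which copy of the base array a row sits in) while the remaining two must be chosen "locally" from the Hadamard structure so that each $2^\alpha$-tuple formed by all $\alpha$ columns is hit exactly $b$ times. This requires that the two Hadamard-derived columns be balanced \emph{within each sub-block of $4b$ rows cut out by fixing the doubling columns}, which holds because doubling replicates the entire base array, so each such sub-block is a full copy of the OA$(4b,4b-1,2,2)$ and any two of its columns form an OA$(4b,2,2,2)$. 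Once this is observed, the verification that the trailing $\alpha$ columns form an OA$(2^\alpha b, \alpha, 2, \alpha)$ is immediate, and the outer strength-$2$ property has already been maintained throughout the doubling.
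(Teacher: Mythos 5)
Your construction has a genuine gap: it does not produce enough columns. The lemma requires an OA$(2^{\alpha}b, k+\alpha, 2, 2)$ for \emph{any} $k+\alpha < 2^{\alpha}b$, i.e.\ with up to $2^{\alpha}b-1$ columns, and this full range is exactly what Theorem \ref{biggercases} consumes. Your doubling step $D \mapsto \left[\begin{smallmatrix} D & \mathbf{0} \\ D & \mathbf{1}\end{smallmatrix}\right]$ doubles the number of rows but adds only \emph{one} column, so after $\alpha-2$ iterations starting from the OA$(4b,4b-1,2,2)$ you have $2^{\alpha}b$ rows but only $4b-1+(\alpha-2)=4b+\alpha-3$ columns. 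For $\alpha=2$ this happens to suffice, but for $\alpha\geq 3$ it falls far short of $2^{\alpha}b-1$ (e.g.\ $\alpha=3$, $b=3$: you can build at most $12$ columns where up to $23$ are required), and truncation cannot create columns you never built. The parts of your argument that do work --- preservation of strength $2$ under your doubling, and the observation that the $\alpha-2$ doubling columns together with any two Hadamard-derived columns hit each $\alpha$-tuple exactly $b$ times because each sub-block cut out by the doubling columns is a full copy of the base OA --- are correct, but they prove only a strictly weaker statement.

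The missing idea is a doubling that also doubles the column count. The paper inducts on $\alpha$: taking $L$ to be an OA$(2^{\alpha}b,\,2^{\alpha}b-1,\,2,\,2)$ whose final $\alpha$ columns have full strength, it forms
\[ L' = \left[ \begin{array}{c|c|c} L & L & \mathbf{1} \\ \hline \overline{L} & L & \mathbf{0} \end{array} \right], \]
where $\overline{L}$ is the complement of $L$; this has $2^{\alpha+1}b-1$ columns. The only delicate pairs are column $i$ of the first block against column $i$ of the second block, and these are rescued precisely by the complementation: the top half contributes only the patterns $00,11$ and the bottom half only $01,10$, each the correct number of times. The trailing $\alpha+1$ columns (the last $\alpha$ columns of the middle block together with the new $\mathbf{1}/\mathbf{0}$ column) then inherit full strength from the inductive hypothesis. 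Without some device of this kind that grows the number of columns geometrically, your approach cannot cover the stated range of $k$.
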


\begin{proof}
We proceed by induction on $\alpha$. Suppose $\alpha=2$. 
 The existence of a Hadamard matrix of order $4b$ implies the existence of an OA$(4b,k+2,2,2)$ for any $k+2\leq 4b-1$ 
 by Lemmas \ref{haddd} and \ref{subarrays}. By the definition of the strength of an orthogonal array, the final two columns must contain each ordered pair $b$ times, so the final two columns form an OA$(4b,2,2,2)$. 
 
 Next assume that the lemma is true for a fixed value of $\alpha\geq 2$. 
 then there exists an 
 $L=$OA$(2^{\alpha}b,2^{\alpha}b-1,2,\alpha)$
 with the specified properties. 
 Observe that the following matrix $L'$ is an 
 OA$(2^{\alpha+1}b,2^{\alpha+1}b-1,2,2)$:
			$$ L'=\left[ \begin{array}{c|c|c}
            	 L & L & {\bf 1}  \\
            	\hline
            	 \overline{L} & L & {\bf 0} \\
            \end{array} \right],$$ %
 where $\overline{L}$ is formed from $L$ by replacing each $0$ with $1$. 
 Moreover, the final $\alpha+1$ columns  of $L'$ contain each binary sequence of dimension $\alpha+1$ exactly once. Thus the final $\alpha+1$ columns form an OA$(2^{\alpha+1}b,\alpha+1,2,\alpha+1)$. Hence an OA$(2^{\alpha+1}b,k+\alpha+1,2,\alpha+1)$ can be obtained for any $k$ such that $k+\alpha+1< 2^{\alpha+1}b$ by deletion of columns. This completes the induction and the proof. 
\end{proof}


From the previous lemma and Corollary \ref{bigcases}, we have the following. 

\begin{theorem}
If there exists a Hadamard matrix $H(4b)$, then there exists 
$I_{k+\alpha}(2^{\alpha}b,2^k,2,2)$ for any $2\leq \alpha$;   
$2^{\alpha}+\alpha+ 1\leq k< 2^{\alpha}b-\alpha$.
\label{biggercases}
\end{theorem}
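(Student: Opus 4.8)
The plan is to combine the two immediately preceding results in the obvious way, and the only real work is checking that the hypotheses line up. First I would invoke the previous lemma: given a Hadamard matrix $H(4b)$ and any $\alpha\geq 2$, it produces an OA$(2^{\alpha}b,k'+\alpha,2,2)$ whose final $\alpha$ columns form an OA$(2^{\alpha}b,\alpha,2,\alpha)$, valid for every $k'$ with $\alpha\leq k'+\alpha<2^{\alpha}b$. I want to feed this into Corollary \ref{bigcases}, which asks for an OA$(2^{\alpha}b,k+\alpha,2,2)$ with the last $\alpha$ columns an OA$(2^{\alpha}b,\alpha,2,\alpha)$, under the constraint $2^{\alpha}+\alpha+1\leq k$, and then outputs exactly $I_{k+\alpha}(2^k,2^{\alpha}b,2,2)$.

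So the second step is to match parameters: take the $k$ of Corollary \ref{bigcases} to be the $k$ of the theorem. The lemma supplies the required orthogonal array precisely when $k+\alpha<2^{\alpha}b$, i.e. $k<2^{\alpha}b-\alpha$, which is exactly the upper bound in the theorem statement; the lower bound $2^{\alpha}+\alpha+1\leq k$ is carried over verbatim from the hypothesis of Corollary \ref{bigcases}. Hence for every $k$ in the stated range the hypotheses of Corollary \ref{bigcases} are met, and it yields $I_{k+\alpha}(2^k,2^{\alpha}b,2,2)$.

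Finally, since the relation ``there exists an array of type $I_j(m,n,2,2)$'' is symmetric in $m$ and $n$ (transposing an $I_j(m,n,2,2)$ gives an $I_j(n,m,2,2)$ — indeed Corollary \ref{bigcases} already exploits this, as its proof ``constructs the transpose design''), the array $I_{k+\alpha}(2^k,2^{\alpha}b,2,2)$ is the same object as $I_{k+\alpha}(2^{\alpha}b,2^k,2,2)$, which is what the theorem asserts. That completes the argument.

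There is essentially no obstacle here: the theorem is a packaging of the lemma and Corollary \ref{bigcases}, and the one thing to be careful about is that the two parameter windows agree, namely that the lemma's condition $k+\alpha<2^{\alpha}b$ is exactly the theorem's $k<2^{\alpha}b-\alpha$ and that no off-by-one slippage occurs between ``$k$'' in the corollary and ``$k$'' in the theorem. Once that bookkeeping is confirmed, the proof is a two-line citation.
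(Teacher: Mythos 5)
Your proposal is correct and matches the paper's own (one-line) proof exactly: the theorem is obtained by feeding the orthogonal array produced by the preceding doubling lemma into Corollary \ref{bigcases}, with the parameter window $2^{\alpha}+\alpha+1\leq k<2^{\alpha}b-\alpha$ arising precisely as you describe. The transpose observation is already implicit in the corollary's statement, so nothing further is needed.
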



Before we completely deal with the case when $m$ is odd and $k$ is small, we need some constructions for specific parameters. 

\begin{lemma}
There exists $I_5(12,8,2,2)$, $I_6(12,16,2,2)$ and      
 $I_{4}(12,12,2,2)$.
\label{specifics}
\end{lemma}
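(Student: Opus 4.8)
The plan is to construct each of the three arrays explicitly, using the abelian machinery from Section 3 wherever possible and falling back on a concrete display otherwise. For $I_{4}(12,12,2,2)$, the cleanest route is to note that $\textup{OA}(12,4,2,2)$ certainly exists (delete columns from the $\textup{OA}(12,11,2,2)$ coming from $H(12)$ via Lemma \ref{haddd}), so I would look for a $4\times 4$ matrix $K$ over $\mathbb{F}_2$ and an $\textup{OA}(12,k,2,2)$ with appropriate column sums to feed into Theorem \ref{bigdeal}. Actually a faster argument: $k=4$, $m=n=12$ is admissible ($2^2\mid 12$, $2^4\mid 144$), and $12=4+8$, so I would instead try to glue — by Lemma \ref{lem:glueing}, an $I_4(12,12,2,2)$ follows from $I_4(12,4,2,2)$ and $I_4(12,8,2,2)$. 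The second of these is an instance of $I_4(12,2^2,2,2)$... no; rather one would get $I_4(12,8,2,2)$ from an $\textup{OA}(12,4,2,2)$ together with Theorem \ref{bigdeal1} applied with a suitable $A$ having $N=3$ so that $q^N=8$. The first, $I_4(12,4,2,2)$, is ruled out by Lemma \ref{thm:exception1} (taking $b=1$), so the gluing approach fails and I expect the author simply exhibits a $12\times 12$ array; I would do likewise, presenting the array and invoking Lemma \ref{justcheckthefirst} (check only the first row and column plus the factorial-design property) to verify it.

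For $I_5(12,8,2,2)$ and $I_6(12,16,2,2)$, both are of the shape $I_k(12,2^{k-2},2,2)$, so Theorem \ref{eightisenough} is the natural tool — except that its hypotheses force $k\geq 5$ and the remark after the theorem states it cannot work for odd $m$ when $k\leq 6$; so $k=5,6$ are exactly the cases \emph{outside} its reach, confirming that a direct construction is needed. Here I would use Theorem \ref{bigdeal} (equivalently Theorem \ref{bigdeal1}) with $G=\textup{OA}(12,k,2,2)$ (obtained from $H(12)$ by deleting columns) and $A=[I\mid K]$ a $(k-2)\times k$ matrix, $K$ a $(k-2)\times 2$ matrix with distinct columns each of weight $\geq 2$ (so $\langle A\rangle$ is an $\textup{OA}(2^{k-2},k,2,2)$ by Corollary \ref{conditionsonk}), chosen so that $GA^{\perp}=G[K^{T}\mid I]^{T}$ — whose two columns are the two sums of subsets of columns of $G$ indexed by the columns of $K$ — is an $\textup{OA}(12,2,2,2)$, i.e.\ those two column sums are orthogonal in the sense $\omega=6$, dot product $=3$. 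So the task reduces to: pick a concrete $\textup{OA}(12,5,2,2)$ and $\textup{OA}(12,6,2,2)$ and exhibit two subsets of their columns whose sums are orthogonal. I would present these $\textup{OA}$s explicitly (or reference {\tt had.12} from \cite{sloane1999library}) and display the relevant $K$; verification is a short computation.

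The main obstacle is purely computational book-keeping: finding, for $k=5$ and $k=6$, the right choice of column subsets of a $12$-row orthogonal array whose sums are orthogonal binary vectors of weight $6$ — this is exactly the condition that failed generically for small $k$ in the discussion preceding Conjecture \ref{whoknows}, so special structure of $H(12)$ must be used, and one should not expect a slick uniform argument. I would organize the write-up as: (1) recall $G=\textup{OA}(12,6,2,2)$ from the columns of the normalized $H(12)$; (2) for $I_6(12,16,2,2)$ exhibit $K$ and check $GA^{\perp}=\textup{OA}(12,2,2,2)$, then apply Theorem \ref{bigdeal}; (3) for $I_5(12,8,2,2)$ delete one column of $G$ and repeat; (4) for $I_4(12,12,2,2)$ either delete a further column and repeat (if a valid $K$ exists, which it need not since $I_4(12,4,2,2)$ is forbidden) or display a $12\times 12$ array directly and verify via Lemma \ref{justcheckthefirst}. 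I expect step (4) to require the explicit array rather than the abelian construction.
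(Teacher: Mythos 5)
There is a genuine gap in your plan for $I_5(12,8,2,2)$ and $I_6(12,16,2,2)$: the construction you propose is exactly the one the paper proves cannot exist. Any array of the form $G\boxplus\langle A\rangle$ produced by Theorem \ref{bigdeal} is abelian, and your requirement that $GA^{\perp}=G[K^{T}\mid I]^{T}$ be an $\textup{OA}(12,2,2,2)$ forces each of its two columns --- a sum of $|V|$, respectively $|W|$, columns of $G$ with $|V|,|W|\geq 3$ --- to have weight $6$. Lemma \ref{countingargument} shows that for \emph{every} $\textup{OA}(12,\ell,2,2)$ (not just generically), a sum of $\ell$ columns has weight $\equiv 2\pmod 4$ only when $\ell\equiv 1$ or $2\pmod 4$; combined with the constraints from Corollary \ref{conditionsonk} (distinct columns of $K$, each of weight at least $2$) this leaves no admissible $V,W$ when $k\leq 6$. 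So no ``special structure of $H(12)$'' can rescue the computation --- the obstruction is arithmetic, not a failure to search hard enough --- and indeed the paper remarks in the conclusion that no abelian $I_5(12,8,2,2)$ exists at all. The idea you are missing is the paper's two-stage, non-abelian construction: first build $C\boxplus R$ with $C$ an $\textup{OA}(12,k,2,2)$ and $R=\langle[I_{k-2}\mid\mathbf{0}]\rangle$ (the $2^{k-2}$ vectors vanishing in the last two coordinates), so that Theorem \ref{bigdeal1} gives a factorial design whose \emph{columns} are strength $2$ but whose rows are not; then permute entries \emph{within each column} --- which preserves both the factorial-design property and the column strength --- so that the rows become orthogonal arrays. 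The content of the lemma is the explicit exhibition of such permutations (the superscripts in Tables \ref{tabble3}--\ref{tabble5} and the appendix).

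Your treatment of $I_4(12,12,2,2)$ is closer to the mark: you correctly rule out gluing via Lemma \ref{thm:exception1} and fall back on an explicit array, which is essentially what the paper does (it tiles three copies of an abelian $12\times 4$ block $[A|A|A]$ and again permutes within columns). One small caution there: Lemma \ref{justcheckthefirst} is stated only for arrays of the form $C\boxplus R$, so it cannot be invoked to verify the final (non-abelian) $12\times 12$ array; after the within-column rearrangement one must check the row condition directly, while the factorial-design and column properties are inherited automatically.
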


\begin{proof}
In Table \ref{tabble3} we present an abelian array of the form $C\boxplus R$, where 
$R$ is the rowspace of the $3\times 5$ matrix 
$[I_3|0]$ and $C$ is an OA$(12,5,2,2)$ (constructed from $5$ columns of a Hadamard matrix of order $12$). 
Now, the columns of the matrix $C[I_3|0]^T$ are in turn distinct columns of $C$; thus $C[I_3|0]^T$ is an 
OA$(12,3,2,2)$. 
Hence, by Theorem  \ref{bigdeal1}, $C\boxplus R$ is a row-column factorial design. Moreover, each column is an orthogonal array of strength $2$. We can then rearrange the elements in each column to create an $I_5(12,8,2,2)$; the rearrangement is indicated by the use of superscripts. That is, we permute entries within each column so that vectors with the same superscript belong to the same row.

\begin{table}[H]
	\centering
			\renewcommand{\arraystretch}{1.3}
			\resizebox{12cm}{!}{%
			\begin{tabular}{c}
			$\begin{array}{|cccc cccc|}
			    \hline
            	\bf 00000^{\textit{A}}&	\bf{10000}^\textit{C}&	{\bf 01000}^\textit{D} &	\bf11000^\textit{B} &	\bf{00100}^\textit{D} &	\bf{10100}^\textit{B} &	\bf 01100^\textit{A}&	\bf{11100}^\textit{C} \\
                \bf{00001}^{\textit{B}} &	 10001^{D} &	01001^C&	11001^A&	00101^C&	10101^A&	 01101^{B} &	 11101^{D}  \\
                \bf{00110}^\textit{C}&	10110^A&	 01110^{B} &	 11110^{D} &	 00010^{B} &	 10010^{D} &	01010^C&	11010^A \\
                \bf{01011}^\textit{D} &	 11011^{B} &	00011^A&	10011^C&	01111^A&	11111^C&	00111^{D}&	 10111^{B}  \\ \hline
                \bf{01100}^\textit{E}&	11100^G&	00100^{F}&	10100^{H}&	01000^{H}&	11000^{F}&	00000^G&	{10000}^E \\
                \bf{01111}^{\textit{F}}&	11111^{H}&	00111^E&	10111^G&	01011^G&	11011^E&	00011^{H}&	10011^{F} \\
                \bf{11001}^\textit{G}&	01001^E&	10001^{H}&	00001^{F}&	11101^{F}&	01101^{H}&	10101^E&	00101^G \\
                \bf{11010}^{\textit{H}}&	01010^{F}&	10010^G&	00010^E&	11110^E&	01110^G&	10110^{F}&	00110^{H} \\ \hline
                \bf{10010^\textit{I}}&	00010^K&	11010^{J}&	01010^{L}&	10110^{J}&	00110^{L}&	11110^I&	01110^K \\
                \bf{10101}^{\textit{J}}&	00101^{L}&	11101^I&	01101^K&	10001^I&	00001^K&	11001^{J}&	01001^{L} \\
                \bf{10111^\textit{K}}&	00111^I&	11111^{L}&	01111^{J}&	10011^{L}&	00011^{J}&	11011^K&	01011^I \\
                \bf{11100}^{\textit{L}}&	01100^{J}&	10100^K&	00100^I&	11000^K&	01000^I&	10000^{L}&	00000^{J} \\ \hline
            \end{array}$ 
            \end{tabular}%
			}
			\caption{An array of type $I_5(12,8,2,2)$, with rows indicated by superscripts.}
			\label{tabble3}
\end{table}

In Table \ref{tabble4}, 
first consider the matrix $A$ formed by 
the first $4$ columns. 
This matrix is abelian of the form $C\boxplus R$, where $C$ is an OA$(12,4,2,2)$ and
$R$ is the rowspace of a $2\times 4$ matrix. 
 By Theorem  \ref{bigdeal1} (or inspection if easier), $C\boxplus R$ is a row-column factorial design. 
Thus $[A|A|A]$, as shown in Table \ref{tabble4}, is a row-column factorial design with each column an orthogonal array of strength $2$.
It thus remains to rearrange the elements within each column so that the rows are each of strength $2$. 
The superscripts $A$, $B$, $C$ and $D$ 
 indicate $4$ rows of strength $2$. The remaining rows are formed by cyclic shifts of each of these by $4$ rows and then $8$ rows; also indicated by superscripts.  
This results in the array given in Table \ref{tabble5}. 

\begin{table}[H]
	\begin{center}
		\renewcommand{\arraystretch}{1.4}
		\resizebox{\columnwidth}{!}{%
		\begin{tabular}{c}
		$ \begin{array}{|cccc| cccc |cccc|}
		    \hline
            \bf 0000^\textit{A}& \bf 0001^{\textit{B}}& \bf 1111^{\textit{C}}& \bf 1110^{\textit{D}}& \bf 0000^\textit{E}& \bf 0001^\textit{F}& \bf 1111^\textit{G}& \bf 1110^\textit{H}& \bf 0000^\textit{I}& \bf 0001^\textit{J}&	            \bf 1111^\textit{K}&	            \bf 1110^\textit{L} \\
                \bf 0110^\textit{J}&	0111^I&	1001^A&	1000^{B}&	0110^{B}&	0111^A&	1001^E&	1000^F&	0110^F&	0111^E&	1001^I&	1000^J \\
                \bf 1011^\textit{B}&	1010^A&	0100^{D}&	0101^{C}&	1011^F&	1010^E&	0100^H&	0101^G&	1011^J&	1010^I&	0100^L&	0101^K \\
                \bf 1101^\textit{D}&	1100^{C}&	0010^{B}&	0011^A&	1101^H&	1100^G&	0010^F&	0011^E&	1101^L&	1100^K&	0010^J&	0011^I \\ \hline
                \bf 0000^\textit{L}&	0001^K&	1111^K&	1110^L&	0000^{D}&	0001^{C}&	1111^{C}&	1110^{D}&	0000^H&	0001^G&	1111^G&	1110^H \\
                \bf 0111^\textit{H}&	0110^G&	1000^G&	1001^H&	0111^L&	0110^K&	1000^K&	1001^L&	0111^{D}&	0110^{C}&	1000^{C}&	1001^{D} \\
                \bf 0011^\textit{C}&	0010^{D}&	1100^I&	1101^J&	0011^G&	0010^H&	1100^A&	1101^{B}&	0011^K&	0010^L&	1100^E&	1101^F \\
                \bf 0101^\textit{I}&	0100^J&	1010^J&	1011^I&	0101^A&	0100^{B}&	1010^{B}&	1011^A&	0101^E&	0100^F&	1010^F&	1011^E \\ \hline
                \bf 1001^\textit{K}&	1000^L&	0110^E&	0111^F&	1001^{C}&	1000^{D}&	0110^I&	0111^J&	1001^G&	1000^H&	0110^A&	0111^{B} \\
                \bf 1110^\textit{E}&	1111^F&	0001^F&	0000^E&	1110^I&	1111^J&	0001^J&	0000^I&	1110^A&	1111^{B}&	0001^{B}&	0000^A \\
                \bf 1010^\textit{G}&	1011^H&	0101^L&	0100^K&	1010^K&	1011^L&	0101^{D}&	0100^{C}&	1010^{C}&	1011^{D}&	0101^H&	0100^G \\
                \bf 1100^\textit{F}&	1101^E &	0011^H &	0010^G &	1100^J &	1101^I &	0011^L &	0010^K &	1100^{B} &	1101^A&	0011^{D}&	0010^{C} \\ \hline
            \end{array} $ 
            \end{tabular}%
			}
			\caption{A factorial row-column design with each row strength $2$.}
			\label{tabble4}
			\end{center}
\end{table}

\begin{table}[H]
	\begin{center}
	\renewcommand{\arraystretch}{1.2}
		\resizebox{\columnwidth}{!}{%
		\begin{tabular}{c}
			$ \begin{array}{|cccc| cccc |cccc|}
		    \hline
		    \bf 0000&   \bf 1010&	\bf 1001&	\bf 0011&	\bf 0101&	\bf 0111&	\bf 1100&	\bf 1011&	\bf 1110&	\bf 1101&	\bf 0110&	\bf 0000 \\
            \bf 0110&	0100&	1010&	1101&	1100&	1111&	0001&	0111&	1011&	0001&	0010&	1000 \\
            \bf 1011&	0001&	0010&	1000&	0110&	0100&	1010&	1101&	1100&	1111&	0001&	0111 \\
            \bf 1101&	0010&	0100&	1110&	0000&	1000&	0101&	1110&	0111&	1011&	0011&	1001 \\ \hline
            \bf 0000&	1000&	0101&	1110&	0111&	1011&	0011&	1001&	1101&	0010&	0100&	1110 \\
            \bf 0111&	1011&	0011&	1001&	1101&	0010&	0100&	1110&	0000&	1000&	0101&	1110 \\
            \bf 0011&	1100&	1111&	0101&	1001&	0001&	1111&	0100&	1010&	0110&	1000&	0010 \\
            \bf 0101&	0111&	1100&	1011&	1110&	1101&	0110&	0000&	0000&	1010&	1001&	0011 \\ \hline
            \bf 1001&	0001&	1111&	0100&	1010&	0110&	1000&	0010&	0011&	1100&	1111&	0101 \\
            \bf 1110&	1101&	0110&	0000&	0000&	1010&	1001&	0011&	0101&	0111&	1100&	1011 \\
            \bf 1010&	0110&	1000&	0010&	0011&	1100&	1111&	0101&	1001&	0001&	1111&	0100 \\
            \bf 1100&	1111&	0001&	0111&	1011&	0001&	0010&	1000&	0110&	0100&	1010&	1101 \\ \hline

            \end{array} $ 
            \end{tabular}%
			}
	        \caption{An array of type $I_4(12,12,2,2)$.}
	        \label{tabble5}
	\end{center}
\end{table}

Finally, $I_6(12,16,2,2)$ is given in the Appendix. Similarly to above, this is presented first as an abelian row-column factorial design where each column is of strength $2$. The superscripts indicate how to permute the entries within each column. 
\end{proof}

We can now give necessary and sufficient conditions for the case when the number of rows is congruent to $4$ (mod $8$), assuming the truth Conjecture \ref{whoknows}.  
	
\begin{theorem}
Let $m$ and $b$ be odd.  
If Conjecture \ref{whoknows} is true, 
Then $I_k(4m,2^ab,2,2)$ exists if and only if 
$(k,4m,2^ab,2,2)$ is admissible and 
$$(k,4m,2^ab,2,2)\not\in \{(3,4m,4,2,2),(3,4,4m,2,2)\mid m \mbox{\rm\ is odd}\}.$$

\label{thisisalabel}
\end{theorem}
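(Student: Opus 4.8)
The plan is to prove this by reducing the general case $I_k(4m,2^ab,2,2)$ to the case $I_k(4m,4,2,2)$-style building blocks already established, and then invoking the glueing lemma (Lemma \ref{lem:glueing}) repeatedly. First I would verify the necessity direction: if $(k,4m,2^ab,2,2)$ is not admissible then the array cannot exist by Lemma \ref{triviality}, and the two listed exceptions are ruled out by Lemma \ref{thm:exception1} (which handles $I_3(4,4b,2,2)$ with $b$ odd, and by transposition $I_3(4m,4,2,2)$ with $m$ odd since $4m=4\cdot m$ with $m$ odd). So the content is entirely in the sufficiency direction.

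For sufficiency, I would fix an admissible $(k,4m,2^ab,2,2)$ avoiding the exceptions and split on the value of $k$ and on which of $4m$, $2^ab$ is smaller. The admissibility conditions force $2\leq k$, $k\leq \min((4m-1),(2^ab-1))$ coming from Lemma \ref{2strong}, and $4\mid 4m$, $4\mid 2^ab$, $2^k\mid 4m\cdot 2^ab$. For small $k$ (say $k\leq 7$) I would assemble the target from the explicit small designs: $I_2(4,4,2,2)$, $I_4(12,12,2,2)$, $I_5(12,8,2,2)$, $I_6(12,16,2,2)$, $I_7(12,32,2,2)$ from Lemmas \ref{specifics} and Corollary \ref{yesyes2}, together with the power-of-two designs $I_k(2^M,2^N,2,2)$ from Theorem \ref{biggerthanelvis}, and glue copies of these along rows and columns using Lemma \ref{lem:glueing} and Corollary \ref{cor:blowup} to build up an arbitrary number of rows $4m$ and columns $2^ab$ that are divisible by the relevant small dimensions. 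Writing $4m = 4m$ with $m$ odd, one needs $4m$ as a sum of $4$'s and $12$'s (possible for all $m\geq 3$, since $4m\in\{12,16,20,24,\dots\}$ can be written as $4a+12b$; only $4m=8$ i.e. $m=2$ is even and excluded, and $4m=4$ i.e.\ $k=2$ is the trivial base case), and similarly $2^ab$ as a sum of dimensions $8,16,32,\dots$ compatible with the required $k$. For larger $k$ (roughly $k\geq 8$), I would invoke Corollary \ref{eightandabove}, which gives $I_k(4m,2^{k-2},2,2)$ for every odd $m\geq 3$ and $8\leq k\leq 11$, and Conjecture \ref{whoknows} via Theorem \ref{eightisenough}, which (conditionally) gives $I_k(4m,2^{k-2},2,2)$ for all $8\leq k\leq 4m-1$; then pad the column count from $2^{k-2}$ up to the required $2^ab$ by glueing (note $2^ab$ admissible with $2^k\mid 4m\cdot 2^ab$ and $k\leq 2^ab-1$ forces $2^ab$ to be a multiple of $2^{k-2}$ when $m$ is odd, or close enough that the remaining gap is itself a gluable admissible width). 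The transpose cases $I_k(2^ab,4m,2,2)$ follow by transposing everything.

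The main obstacle I expect is the bookkeeping of the divisibility and summability conditions: one must check that for every admissible parameter tuple (outside the two exceptions) the target dimensions $4m$ and $2^ab$ really do decompose as nonnegative integer combinations of the available block dimensions with a common value of $k$, and in particular that the few genuinely small or boundary cases (e.g.\ $k=3$ with $4m=4$ forced out, $k=2$ trivial, $m=2$ excluded from the hypothesis) are all accounted for. The parity hypothesis that $m$ and $b$ are odd is what makes the exception list short and clean, and it is what one leans on to show that whenever $4m\neq 4$ the number $4m$ is at least $12$ and expressible via $4$'s and $12$'s. I would organize the sufficiency proof as a short case analysis: Case $k=2$ (trivial, blow up $I_2(4,4,2,2)$); Case $3\leq k\leq 7$ with $4m\geq 12$ (glue the explicit small designs); Case $k\geq 8$ (use Corollary \ref{eightandabove} / Theorem \ref{eightisenough} under Conjecture \ref{whoknows}, then pad); and finally note closure under transposition. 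Each case is a routine verification once the building blocks and Lemma \ref{lem:glueing} are in hand, so the proof is essentially an assembly argument rather than anything requiring new ideas.
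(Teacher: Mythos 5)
Your proposal matches the paper's proof in essentially every respect: necessity via Lemmas \ref{triviality}, \ref{2strong} and \ref{thm:exception1}, and sufficiency by a case split (organized by $a$ in the paper rather than by $k$, but equivalent since admissibility with $m,b$ odd forces $k\leq a+2$) that glues the explicit designs of Lemma \ref{specifics}, Corollary \ref{yesyes2} and Theorem \ref{biggerthanelvis} for $k\leq 7$ and invokes Corollary \ref{eightandabove} together with Conjecture \ref{whoknows} for $k\geq 8$. The only slip is your decomposition of $4m$ into ``$4$'s and $12$'s'': a $4$-row block cannot carry $k\geq 4$ (Lemma \ref{2strong} gives $k\leq 3$ there), so the paper instead writes $4m=12+8\cdot\frac{m-3}{2}$ and glues $12$-row and $8$-row blocks, the latter supplied by Theorem \ref{biggerthanelvis}.
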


\begin{proof}
Since $(k,4m,2^ab,2,2)$ is admissible, from Lemmas \ref{triviality} and \ref{2strong}:  
$a\geq 2$, $k\leq a+2$,  
$k\leq 4m-1$ and $k\leq 2^ab-1$. 

{\bf Case 1}: $a=2$ and $b=1$. Then $k\leq 3$. 
Suppose $k=2$. 
Now, $[00,01,10,11]^T$ is an OA$(4,2,2,2)$, so by Lemma \ref{letsgettrivial}, there exists $I_2(4,4,2,2)$.  
Thus by Corollary \ref{cor:blowup}, $I_2(4m,4b,2,2)$ exists for any integers $m$ and $b$.
Otherwise $k=3$. 
By Lemma \ref{thm:exception1}, $I_3(4m,4,2,2)$ does not exists for odd $m$. 

{\bf Case 2}: $a=2$ and $b\geq 3$.
If $m=1$, this is the transpose of Case 1, so we may assume $m\geq 3$. 
Thus $k\leq 4$ implies admissibility. 
From Lemma \ref{specifics}, there exist 
$I_4(12,12,2,2)$ and $I_5(12,8,2,2)$. From Theorem \ref{biggerthanelvis}, $I_6(8,8,2,2)$ exists.
In turn, by Lemma \ref{subarrays}, $I_4(12,8,2,2)$ and $I_4(8,8,2,2)$ exist.
By adjoining copies of 
$I_4(12,12,2,2)$, 
$I_4(12,8,2,2)$, $I_4(8,12,2,2)$ and $I_4(8,8,2,2)$
as needed using Lemma \ref{lem:glueing}, there exists 
$I_4(4m,4b,2,2)$ for any $m,b\geq 3$. 
{\bf Case 3}: $m=1$ and $a\geq 3$.
Since $m=1$, $k\leq 3$. Then there exists $I_3(4,8,2,2)$ by Theorem \ref{biggerthanelvis}. Thus there exists
$I_3(4,2^ab,2,2)$ for any $a\geq 3$ by Corollary \ref{cor:blowup}. 

{\bf Case 4}: $m\geq 3$ and $a\in \{3,4\}$.  
Here $k\leq a+2$ implies admissibility. 
Now, $I_{a+2}(12,2^a,2,2)$ exists for each $a\in \{3,4\}$ from Lemma  \ref{specifics}. 
Next, $I_6(8,8,2,2)$ exists by Theorem \ref{biggerthanelvis}. 
Thus by Lemma \ref{lem:glueing},  
 $I_{a+2}(4m,2^a,2,2)$ exists for any odd integer $m$. 
In turn, $I_{a+2}(4m,2^ab,2,2)$ exists by Corollary \ref{cor:blowup}. 

{\bf Case 5}: $m\geq 3$ and $a=5$.  
Then $k\leq 7$ implies admissibility. 
Now, $I_7(12,2^5,2,2)$ 
exists by Corollary \ref{yesyes2}. 
Also, $I_7(8,2^5,2,2)$ exists by Theorem \ref{biggerthanelvis}. 
Thus by Lemma \ref{lem:glueing} and Corollary \ref{cor:blowup}, 
$I_7(4m,2^5b,2,2)$ exists for all odd $m\geq 3$ and odd $b$.

{\bf Case 6}: $m\geq 3$ and $a\geq 6$.  
From Corollary \ref{eightandabove}  and assuming the truth of 
Conjecture \ref{whoknows}, 
$I_{a+2}(4m,2^{a},2,2)$ exists for all $6\leq a\leq 4m-3$.     
Thus $I_k(4m,2^{a}b,2,2)$ exists for all $k\leq a+2$.  
\end{proof}

\begin{theorem}
Let $m\leq 5$ and $b$ odd. Then $I_k(4m,2^ab,2,2)$ exists for all admissible 
    $$(k,4m,2^ab,2,2)\not\in \{(3,4m,4,2,2),(3,4,4m,2,2)\mid m \mbox{\rm\ is odd}\}.$$
    \label{section5main}
\end{theorem}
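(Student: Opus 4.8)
The plan is to handle the values $m\in\{1,2,3,4,5\}$ together, leveraging the machinery already in place in Theorem \ref{thisisalabel} and Lemma \ref{specifics}, so that the only genuinely new work is for the \emph{even} values $m\in\{2,4\}$ (the odd cases $m\in\{1,3,5\}$ fall under Theorem \ref{thisisalabel} once we observe that for these small odd $m$ the relevant Hadamard matrices $H(4m)$ exist, so Conjecture \ref{whoknows} is not actually needed: a Hadamard matrix of order $4m$ exists for $m=1,3,5$, and one checks directly that it yields an $\mathrm{OA}(4m,8,2,2)$ of the type required by Theorem \ref{eightisenough}, or one simply invokes Corollary \ref{eightandabove} which is unconditional). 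So first I would dispose of $m\in\{1,3,5\}$ by citing Theorem \ref{thisisalabel} with the conjecture-dependence removed via Corollary \ref{eightandabove} and explicit small Hadamard matrices.

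For $m=2$, i.e.\ $4m=8$, the array has $8=2^3$ rows, so the dimension is a power of $2$ and Theorem \ref{biggerthanelvis} applies directly: with $q=2$, $M=3$, and $2^N$ the number of columns (after first reducing a general admissible number of columns $2^ab$ to a power of $2$ via Corollary \ref{cor:blowup} — note $8\mid 2^ab$ forces $a\ge3$ when we want strength $2$, wait, more carefully, admissibility only gives $4\mid 2^ab$, so we glue: $I_k(8,2^N,2,2)$ exists for $k\le N+3$, $k\le 7$, $(k,M,q)=(k,3,2)\ne(3,2,2)$ which is automatic, and then $I_k(8,n,2,2)$ for general admissible $n$ follows by Lemma \ref{lem:glueing} combined with $I_k(8,2^N,2,2)$ for consecutive $N$ and Corollary \ref{cor:blowup}). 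One must double-check the admissibility bookkeeping: $(k,8,n,2,2)$ admissible means $4\mid n$, $2^k\mid 8n$, $k\le$ the $\mathrm{OA}$ bounds; since $8=2^3$ is a power of $2$ there is no exceptional case analogous to $(3,4m,4,2,2)$ here.

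For $m=4$, i.e.\ $4m=16=2^4$, again the number of rows is a power of $2$, so Theorem \ref{biggerthanelvis} gives $I_k(16,2^N,2,2)$ for all $k\le N+4$, $k\le(2^4-1)/(2-1)=15$, with no exception. General admissible column counts are reached by the same gluing-plus-blowup argument. The main obstacle — and the reason the theorem singles out $m\le 5$ rather than stating a clean general result — is precisely the odd-$m$, moderate-$k$ regime ($k=6,7$) for $4m\in\{12,20\}$, which is why Lemma \ref{specifics} and Corollaries \ref{yesyes2}, \ref{eightandabove} had to be proved with ad hoc constructions; here those results are simply invoked. So the proof is essentially an assembly: cite Theorem \ref{thisisalabel} (de-conjectured via explicit $H(4m)$ and Corollary \ref{eightandabove}) for $m\in\{1,3,5\}$, cite Theorem \ref{biggerthanelvis} plus Lemmas \ref{lem:glueing}, \ref{subarrays} and Corollary \ref{cor:blowup} for $m\in\{2,4\}$, and verify in each case that the only admissible tuples excluded are the listed $(3,4m,4,2,2)$ and $(3,4,4m,2,2)$ — the first from Lemma \ref{thm:exception1}, the second as its transpose.
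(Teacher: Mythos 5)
Your handling of the odd values $m\in\{1,3,5\}$ matches the paper: it reduces to Theorem \ref{thisisalabel}, with the conjecture discharged for $m\in\{3,5\}$ by the explicit OA$(12,8,2,2)$ and OA$(20,8,2,2)$ constructions. (Minor quibble: Corollary \ref{eightandabove} alone only covers $8\leq k\leq 11$, so for $m=5$ you also need the unconditional corollary giving $I_k(20,2^{k-2},2,2)$ for $8\leq k\leq 19$.)

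The genuine gap is in the even cases $m\in\{2,4\}$. You claim that general admissible column counts $n=2^ab$ ($b$ odd) are reached from the power-of-two cases of Theorem \ref{biggerthanelvis} by ``gluing-plus-blowup.'' This fails precisely when $a=2$ and $b\geq 3$. Take $(k,m,n)=(5,8,12)$: the tuple $(5,8,12,2,2)$ is admissible ($2^5\mid 96$, $5\leq 7$, $5\leq 11$), but any horizontal decomposition $12=n_1+\cdots+n_r$ into blocks $I_5(8,n_i,2,2)$ forces some $n_i=4$, and $I_5(8,4,2,2)$ violates the Bose--Bush bound $k\leq(n_i-1)/(q-1)=3$ of Lemma \ref{2strong}; so no gluing of power-of-two (or any smaller admissible) blocks produces $I_5(8,12,2,2)$. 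The same obstruction kills $I_6(16,12,2,2)$ and, more generally, $I_k(8,4b,2,2)$ for $k\in\{4,5\}$ and $I_k(16,4b,2,2)$ for $k\in\{4,5,6\}$ with $b\geq 3$ odd. The paper closes exactly this hole by importing, from Theorem \ref{thisisalabel} applied in transpose, the arrays $I_5(4b,8,2,2)$ and $I_6(4b,16,2,2)$ for odd $b\geq 3$ --- which ultimately rest on the ad hoc non-abelian constructions $I_5(12,8,2,2)$, $I_4(12,12,2,2)$ and $I_6(12,16,2,2)$ of Lemma \ref{specifics}. Your proposal cites Lemma \ref{specifics} only as motivation for the odd case and never invokes these objects where they are actually indispensable, so as written the $m\in\{2,4\}$ argument does not cover all admissible parameters.
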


\begin{proof}
From the previous theorem and the fact that Conjecture 
\ref{whoknows} is true for $m\in \{3,5\}$, we can assume  $m\in \{2,4\}$.   

Let $m=2$. 
 By Theorem \ref{biggerthanelvis} there exists $I_3(8,4,2,2)$ and  $I_7(8,8,2,2)$. 
Also there exists $I_5(4b,8,2,2)$ (and thus $I_5(8,4b,2,2)$) by the previous theorem, where $b\geq 3$ is odd. 
The result then follows by Lemma \ref{lem:glueing} and Corollary \ref{cor:blowup}. 

Otherwise $m=4$. 
Then by Theorem \ref{biggerthanelvis} there exists $I_3(16,4,2,2)$ and $I_{a+4}(16,2^{a},2,2)$ for any $3\leq a\leq 11$. 
Also there exists $I_6(16,4b,2,2)$, where $b\geq 3$ is odd, by the previous theorem. 
The result then follows by Lemma \ref{lem:glueing} and Corollary \ref{cor:blowup}. 
\end{proof}

\section{Binary row-column factorial designs with strength $t=3$}

In this section we restrict ourselves to binary row-column factorial designs 
of strength $3$. We completely classify these when the dimensions of the arrays are powers of $2$. The aim of this section is to prove the following theorem. 

\begin{theorem}
Let $M\leq N$. Then an array of type 
$I_k(2^M,2^N,2,3)$ exists if and only if 
$3\leq k\leq M+N$, $3\leq M$, $k\leq 2^{M-1}$ and 
 $(k,M,N)\not\in \{(4,3,3),(8,4,4)\}$.
\label{strength3}     
\end{theorem}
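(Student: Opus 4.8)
The plan is to first dispatch the necessity of the conditions, then build the constructions. For necessity: admissibility together with Lemma \ref{triviality} gives $3 \le k \le M+N$, and $2^3 \mid 2^M$ forces $M \ge 3$; the bound $k \le 2^{M-1}$ is exactly Lemma \ref{3strong} applied to the columns (each column is an OA$(2^M,k,2,3)$, which exists only if $2^M \le 2^{k-1}$, i.e. $k \le M+1$ — wait, that gives $k\le M+1$; in fact one needs $2^M\le 2^{k-1}$ rearranged as $k\ge M+1$, so Lemma \ref{3strong} reads $m\le 2^{n-1}$ with $m=2^M,n=k$, giving $M\le k-1$, not a bound on $k$). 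So the genuine upper bound $k \le 2^{M-1}$ must instead come from applying Lemma \ref{3strong} to the \emph{columns viewed the other way}: an OA$(2^M,k,2,3)$ has $2^M$ rows and $k$ columns, so Lemma \ref{3strong} gives $2^M \le 2^{k-1}$... this still isn't $k\le 2^{M-1}$. The correct source is: by Lemma \ref{3strong} an OA$(m,n,2,3)$ needs $m \le 2^{n-1}$; here we also need an OA$(2^M,k,2,3)$ to exist, but the Bose–Bush-type/Rao bound for strength $3$ gives $k \le 2^{M-1}$ (the dual statement — number of \emph{factors} is bounded by $2^{(\text{runs})/2 - 1}$ is false in general; rather one uses that a strength-3 OA on $2^M$ runs has at most $2^{M-1}$ columns when... ). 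I will cite the relevant strength-$3$ Rao bound from \cite{Colbourn:2006:HCD:1202540} for this inequality and for the two sporadic exclusions $(4,3,3)$ and $(8,4,4)$, which are exactly the cases where the Rao bound is met with equality but no such array exists — these I will rule out by a short direct counting argument mirroring Lemma \ref{thm:exception1}: in $I_4(8,8,2,3)$ each row and column would be the odd-weight OA$(8,4,2,3)$ of Lemma \ref{3strong} up to complementation, and a parity/counting contradiction on the distribution of the two complementary halves across the array kills it; similarly for $(8,4,4)$.

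For sufficiency, the backbone is Theorem \ref{bigdeal} in the abelian setting. I would take $C = $ OA$(2^M,k,2,3)$ and $R = \langle A\rangle = $ OA$(2^N,k,2,3)$ with $A$ an $N\times k$ full-rank binary matrix whose columns are $3$-independent (Lemma \ref{fundam} with $t=3$), and I need $GA^\perp$ to be an OA$(2^M, k-N, 2, k-N)$, i.e. the $k-N$ columns of $C A^\perp$ must be \emph{$(k-N)$-independent} — here $G=C$. When $k \le N$ this is vacuous ($A$ can be taken with $A^\perp$ trivial, and we just need OA$(2^M,k,2,3)$ and OA$(2^N,k,2,3)$, which exist by Lemma \ref{3strong} whenever $k \le M+1$ resp.\ $k\le N+1$; for larger $k$ up to the stated bounds one uses the full strength-$3$ linear OA, the $[2^{M-1},M, \cdot]$-type construction / punctured Reed–Muller codes). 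The substantive range is $N < k \le M+N$ with $k$ possibly as large as $2^{M-1}+N$... but the hypothesis caps $k\le M+N$, so $k-N \le M$, and I must exhibit a full-rank $N\times k$ matrix $A$ with $3$-independent columns such that the $k-N$ "syndrome" columns $C A^\perp$ are $(k-N)$-independent over the rows of $C$. I would construct $A = [I_N \mid K]$ with $K$ an $N \times (k-N)$ matrix, so $A^\perp = [K^T \mid I_{k-N}]^T$, and choose $K$ together with $C$ simultaneously: pick $C$ to be (a column-selection of) a strength-$3$ linear OA whose generator matrix's columns can be partitioned into a "visible" block and a block each of whose columns is a prescribed linear combination of three visible ones, arranged so $3$-independence of $A$'s columns holds (columns of $K$ distinct, each of weight $\ge 3$ — analogue of Corollary \ref{conditionsonk} for strength $3$) and so that the $k-N$ sums $\sum$ are jointly independent. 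This is the strength-$3$ analogue of Theorem \ref{eightisenough} and Example \ref{eggstwo}.

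The key assembly steps, in order: (1) prove necessity including the two sporadic non-existences by direct counting; (2) establish the base existence OA$(2^M,k,2,3)$ for all $3\le M$, $k \le 2^{M-1}$ — from Lemma \ref{3strong} (the odd-weight array, giving $k = M+1$ on $2^M$ runs) combined with Lemma \ref{haddd} (Hadamard $\Rightarrow$ OA$(2^M,2^{M-1},2,3)$ when $2^{M-2}$ is a Hadamard order, i.e. $M$ a power-of-two multiple of $4$...), and fill the gap $M+1 < k < 2^{M-1}$ by deleting columns from the Reed–Muller / Hadamard strength-$3$ OA on $2^M$ runs, which exists for all $M\ge 3$ since $2^{M-2}$ is always... no, only for $M$ with $2^{M-2}\in\{1,2,4,\dots\}$, i.e. all $M\ge 2$: $2^{M-2}$ is a power of $2$ hence a Hadamard order, so OA$(2^M,2^{M-1},2,3)$ exists for \emph{all} $M \ge 3$; good; (3) state and prove the strength-$3$ analogue of Corollary \ref{conditionsonk}: $\langle[I_N\mid K]\rangle$ is an OA$(2^N,k,2,3)$ iff the columns of $[I_N\mid K]$ are $3$-independent, which for $[I\mid K]$ reduces to: columns of $K$ distinct, each of weight $\ge 3$, and no column of $K$ equals the sum of two others plus possibly a unit vector — I will work this out as the strength-$3$ independence condition; (4) for each admissible $(k,M,N)$ in the hard range $N<k\le M+N$, construct the pair $(C,K)$ explicitly by taking $C$ from the strength-$3$ OA on $2^M$ runs and choosing $K$'s columns to be carefully chosen sums of $C$'s columns so that $GA^\perp$ is a full-rank square-ish $(k-N)$-independent block; (5) handle the boundary small cases $M=3,4$ by either the explicit arrays ruled in (the non-excluded ones) or by gluing via Lemma \ref{lem:glueing} and blow-up via Corollary \ref{cor:blowup}, exactly as in the proof of Theorem \ref{thisisalabel}; (6) invoke Lemma \ref{lem:glueing} to extend from $n=2^N$ to all admissible $n$ — though here $n$ is restricted to $2^N$, so this step is unnecessary and (5)'s gluing is only among powers of two.

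The main obstacle I anticipate is step (4): producing, uniformly in $(k,M,N)$ with $k-N$ as large as $M$, an $N\times k$ matrix $A=[I_N\mid K]$ that is simultaneously (i) $3$-independent in its columns (so $R=\langle A\rangle$ has strength $3$) and (ii) such that $CA^\perp$ is $(k-N)$-\emph{fully}-independent over the chosen strength-$3$ OA $C$ — condition (ii) is demanding because it asks for $k-N$ column-sums of $C$ to be linearly independent as vectors in $\mathbb{F}_2^{2^M}$, and with $k-N$ up to $M$ this is a genuine rank condition that must be matched against the available strength-$3$ OA structure. The likely resolution is to not take $C$ generic but to take $C$ itself linear — $C = \langle B\rangle$ for an $M\times k$ generator $B$ with $3$-independent columns — reducing (ii) to a statement about $B A^\perp = B[K^T\mid I]^T$ having rank $k-N$, i.e. a pure linear-algebra condition on two matrices over $\mathbb{F}_2$, which can then be arranged by a dimension count (the analogue of the lemma following Theorem \ref{thm:FR.Polynomilas.existence.}, now with $3$-independence replacing no-two-parallel). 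The sporadic exclusions are precisely where this dimension count is tight and fails, matching $(4,3,3)$ and $(8,4,4)$.
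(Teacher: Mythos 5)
Your overall strategy does match the paper's: derive the admissibility bounds from strength-$3$ orthogonal-array theory, rule out the two exceptions by a parity argument, build the main case abelianly from $3$-independent sets of vectors, and finish the small cases with explicit arrays plus blow-up. However, the two steps you defer are exactly where the work lies, and one of them is proposed incorrectly. Your justification for the exception $(8,4,4)$ does not go through: the rows of an OA$(16,8,2,3)$ are \emph{not} ``the odd-weight vectors up to complementation'' (there are $128$ odd-weight vectors of length $8$ and only $16$ rows), so the complementary-halves counting you sketch has nothing to count. What is actually needed --- and what the paper proves in Lemma \ref{strength3conditions} --- is that in \emph{any} OA$(2^M,2^{M-1},2,3)$ all rows have weights of the same parity; the proof decomposes such an array into Hadamard matrices using the strength-$3$ property and tracks parity through Hadamard equivalences. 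Granting that, the contradiction is immediate (every row and every column of a putative $I_{2^{M-1}}(2^M,2^M,2,3)$ is weight-parity-constant, which forces the entire array into one parity class, so its entries cannot form a full factorial design); without it, the $(8,4,4)$ exclusion is unproved. Your argument is salvageable for $(4,3,3)$ by inspection of OA$(8,4,2,3)$, but not for $M=4$.

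Second, the ``main obstacle'' you flag in step (4) is not resolved by a dimension count. After (correctly) linearizing both sides, what is required is, for each admissible $(k,M,N)$ with $k=\min\{2^{M-1},M+N\}$, a full-rank $k\times(M+N)$ matrix $[A|B]$ whose left $k\times M$ block and right $k\times N$ block each have $3$-independent \emph{rows} (Theorem \ref{thm:FR.Polynomilas.existence.} with $t=3$). Since $k$ can equal $2^{M-1}$, which is the maximum possible size of a $3$-independent set in $\mathbb{F}_2^M$ (attained by the odd-weight vectors), the left block must exhaust an extremal $3$-independent set while remaining compatible with full rank of $[A|B]$ and with $3$-independence on the right; this is a tight, non-generic requirement. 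The paper meets it with an explicit construction: a circulant $C_M$ built from cyclic shifts of $(1,1,1,1,0,\dots,0)$, shown to be $3$-independent for $M\ge 5$, $M\ne 6$ (with a separate matrix for $M=6$), assembled into the block matrix with rows $(I_M\,|\,I_M\,|\,\mathbf{0})$, $(C_M-I_M\,|\,C_M\,|\,\mathbf{0})$, $(S\,|\,\mathbf{0}\,|\,I)$, whose invertibility is verified by explicit column operations. You would also still need the concrete arrays $I_4(8,16,2,3)$, $I_7(16,16,2,3)$ and $I_8(16,32,2,3)$ to settle $M\in\{3,4\}$. On the positive side, your eventual identification of $k\le 2^{M-1}$ as the maximum degree of a binary strength-$3$ orthogonal array on $2^M$ runs is the correct source of that bound, and your assembly plan via Corollary \ref{cor:blowup} is sound.
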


\begin{lemma}
Let $M\leq N$. Then $(k,2^M,2^N,2,3)$ is admissible if and only if 
$3\leq k\leq M+N$, $3\leq M$ and $k\leq 2^{M-1}$. 
\label{strengththree}
\end{lemma}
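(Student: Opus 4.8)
The plan is to pin down exactly which of the three defining conditions of admissibility (Lemma \ref{triviality}) — namely $2^3\mid 2^M$, $2^3\mid 2^N$, $2^k\mid 2^M\cdot 2^N$, $t\le k$ — survive as real constraints, and then show that the extra input from the strength-$3$ orthogonal-array bound (Lemma \ref{3strong}) is the only additional obstruction. First I would observe that for dimensions that are powers of $2$, writing $m=2^M$ and $n=2^N$, the divisibility conditions $q^t\mid m$ and $q^t\mid n$ with $q=2$, $t=3$ simply become $M\ge 3$ and $N\ge 3$; since $M\le N$, the binding one is $M\ge 3$. The condition $q^k\mid mn$ becomes $k\le M+N$, and $t\le k$ becomes $k\ge 3$. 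So from Lemma \ref{triviality} alone we already get $3\le k\le M+N$ and $3\le M$.

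Next I would bring in the necessary condition coming from orthogonal arrays of strength $3$. By the definition of admissibility in the excerpt, $(k,2^M,2^N,2,3)$ admissible requires the existence of an OA$(2^M,k,2,3)$ (the columns) and an OA$(2^N,k,2,3)$ (the rows). By Lemma \ref{3strong}, the existence of an OA$(m',k,2,3)$ forces $m'\le 2^{k-1}$; applying this to $m'=2^M$ gives $2^M\le 2^{k-1}$, i.e. $k\le M-1$... wait, that is the wrong direction — I should be careful here. Lemma \ref{3strong} says an OA$(m,n,2,3)$ with $n$ columns forces $m\le 2^{n-1}$; here the array has $2^M$ rows and $k$ columns, so $2^M\le 2^{k-1}$ would give $M\le k-1$, which is implied by $k\le M+N$ only when $N\ge 1$. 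That cannot be the intended constraint. The correct reading: the stated condition is $k\le 2^{M-1}$, so the relevant bound must come from viewing things the other way. Indeed, an OA$(2^M,k,2,3)$ has $2^M$ rows and $k$ columns; Lemma \ref{3strong} applied with ``$m$''$=2^M$ and ``$n$''$=k$ gives $2^M\le 2^{k-1}$, useless. So instead the bound $k\le 2^{M-1}$ must be the Rao-type bound on the number of columns: a strength-$3$ OA on $2^M$ runs can have at most $2^{M-1}$ columns. I would cite this as a consequence of Lemma \ref{3strong} read as ``OA$(2^{n-1},n,2,3)$ is extremal,'' equivalently an OA$(N',k',2,3)$ has $k'\le (N'/2)$ when... — more precisely, the Rao bound for strength $3$ binary arrays on $N'$ runs gives $k'\le N'/2$, so with $N'=2^M$ we get $k\le 2^{M-1}$. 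This is the extra necessary condition, and applied to the rows ($2^N$ runs) it only yields $k\le 2^{N-1}$, which is weaker since $M\le N$.

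Finally, for the converse I would show that whenever $3\le k\le M+N$, $M\ge 3$, and $k\le 2^{M-1}$ all hold, the tuple is admissible, i.e. the two orthogonal arrays actually exist. The array OA$(2^N,k,2,3)$ exists by Lemma \ref{3strong} (take the $2^{k-1}$ odd-weight vectors of length $k$, which form an OA$(2^{k-1},k,2,3)$, then blow up / take a multiple of runs: since $k\le M+N$ and... — more simply, an OA$(2^{k-1},k,2,3)$ exists and $2^{k-1}\mid 2^N$ because $k-1\le M+N-1$ is not quite $\le N$; so I should instead argue existence of OA$(2^N,k,2,3)$ directly, e.g. as a linear code / via Lemma \ref{3strong} plus taking disjoint unions of translates, which multiplies the run count by a power of $2$). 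The array OA$(2^M,k,2,3)$ exists by the same reasoning using $k\le 2^{M-1}$: the extremal OA$(2^{k-1},k,2,3)$ has $2^{k-1}\le 2^{M}$... this needs $k-1\le M$, which is weaker than $k\le 2^{M-1}$, so for $k$ close to $2^{M-1}$ one genuinely needs a construction of a strength-$3$ OA on exactly $2^M$ runs with up to $2^{M-1}$ columns — the dual of the first-order Reed–Muller / extended Hamming code supplies exactly this (the $[2^{M-1}, M, 2^{M-2}]$ or rather the simplex-type construction), giving a linear OA$(2^M, 2^{M-1}, 2, 3)$. I expect this last existence step — exhibiting a binary strength-$3$ orthogonal array on $2^M$ runs achieving $2^{M-1}$ columns — to be the only non-formal part, and it is handled by the standard fact that the $[2^{M-1}+\cdots]$ extended Hamming / Reed–Muller code $\mathcal{R}(1,M-1)$ (or its appropriate shortening) has dual distance $\ge 4$, hence its codewords form a strength-$3$ linear OA of the required size; truncating to any $k\le 2^{M-1}$ columns via Lemma \ref{subarrays} finishes it.
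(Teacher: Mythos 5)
Your proposal is correct and takes essentially the same route as the paper, whose entire proof is the two-line observation that the numeric conditions come from Lemma \ref{triviality} and that ``the bound $k\leq 2^{M-1}$ (and sufficiency) follows by Lemma \ref{3strong}.'' The extra care you take is warranted: as you noticed, the inequality in Lemma \ref{3strong} as printed does not literally yield $k\leq 2^{M-1}$, and the odd-weight array OA$(2^{k-1},k,2,3)$ has the wrong run size when $k-1>M$; your substitutes --- the Rao bound $k\leq N/2$ for binary strength-$3$ arrays for necessity, and the Sylvester--Hadamard/first-order Reed--Muller array OA$(2^M,2^{M-1},2,3)$ (cf.\ Lemma \ref{haddd}) truncated to $k$ columns for sufficiency --- are exactly the content the paper's citation is meant to carry.
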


\begin{proof}
By Lemma \ref{triviality}, 
 $3\leq k\leq M+N$ and $3\leq M$.   
The bound $k\leq 2^{M-1}$ (and sufficiency) follows by Lemma \ref{3strong}. 
\end{proof}

To establish the two exceptions in  
Theorem \ref{strength3}, we first need the following result on orthogonal arrays. This result is a fairly standard observation for researchers in Hadamard codes but we include a proof for thoroughness. 

\begin{lemma}
Let $M\geq 3$. 
In any OA$(2^M,2^{M-1},2,3)$, the weight of any two rows has the same parity. 
\label{strength3conditions}
\end{lemma}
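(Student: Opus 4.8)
I want to show that in an $\mathrm{OA}(2^M,2^{M-1},2,3)$, all rows have weights of the same parity. The cleanest route is to pass from the orthogonal array to a Hadamard-type matrix. Since the array has $2^M$ rows, $2^{M-1}$ columns, $q=2$ levels and strength $3$, Lemma \ref{3strong} tells us that $2^M \le 2^{(2^{M-1})-1}$, but more importantly the array meets the Rao bound for strength $3$ with equality precisely when $2^M = 2\binom{n}{0}+\cdots$; here the relevant fact is that an $\mathrm{OA}(2^M, 2^{M-1},2,3)$ is a \emph{tight} array, and such arrays are exactly the Hadamard codes described just before Lemma \ref{strength3conditions}. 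Concretely, I would argue that (after the usual relabelling) such an array arises as $[H \mid -H]^T$ with $0/1$ in place of $\pm 1$, for a Hadamard matrix $H$ of order $2^{M-1}$; equivalently, the rows of the array form a linear (or at least affine, after translation) code — in fact the first-order Reed–Muller code $\mathrm{RM}(1,M-1)$, since that is the unique tight $\mathrm{OA}(2^M,2^{M-1},2,3)$ up to equivalence.

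**Key steps.** First I would normalise: by the operations allowed in the paper (reordering rows, columns, levels), assume the all-zero vector is a row. Then I would show the rows form a group under coordinatewise addition mod $2$: strength $3$ forces every pair of columns to be "orthogonal" in the weight sense and, combined with the count $2^M$ rows on $2^{M-1}$ columns, forces the distance distribution to be that of a code with all nonzero words of weight $2^{M-2}$ except one word (the all-ones vector) of weight $2^{M-1}$ — this is the MacWilliams/Delsarte argument for a tight array, and it pins the array down as (a translate of) $\mathrm{RM}(1,M-1)$. Once the rows are a linear code spanned by $M$ generators, with the all-ones vector $\mathbf 1$ in the code, every codeword has weight in $\{0, 2^{M-2}, 2^{M-1}\}$; since $M\ge 3$, $2^{M-2}$ is even, so every weight is even — and adding a fixed nonzero row (to undo the normalisation, if the original array was not linear but merely affine) shifts all weights by a common parity, proving the claim.

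**Alternative, more elementary route.** If I want to avoid invoking the classification of tight arrays, I would instead argue directly: take any three columns $i,j,\ell$; strength $3$ says each of the $8$ patterns on those columns occurs exactly $2^{M-3}$ times. Summing the indicator "weight is odd" over all rows and expanding via inclusion–exclusion over triples of columns, the strength-$3$ condition makes the relevant character sums vanish, which already forces a linear constraint on the weight parities. The slick version: consider the $\pm1$ matrix $A$ obtained from the array; strength $3$ means the columns of $A$, together with $\mathbf 1$, have all pairwise and triple-wise "correlations" controlled, and one computes that $A^T A$ and the triple-products force each row of $A$ to be orthogonal to $\mathbf 1$ in a way that constrains $\sum_c A_{rc}$ mod $4$, hence the weight mod $2$, to take only two values; since weights $0$ and $2^M - $ something differ by an even number, all weights share a parity.

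**Main obstacle.** The delicate point is establishing that the array is (affinely) linear — equivalently, that it is forced to be the Hadamard code / $\mathrm{RM}(1,M-1)$ — rather than merely assuming it. The paper's preceding discussion of Hadamard codes suggests the authors are happy to use that identification, so I expect the intended proof simply says: such an array is equivalent to $[H\mid -H]^T$ for a Hadamard matrix $H$ of order $2^{M-1}$ (by Lemma \ref{haddd} with $4m = 2^{M-1}$), its rows have weights $0$, $2^{M-2}$ or $2^{M-1}$, and since $M\ge 3$ these are all even; undoing any initial level-relabelling changes all weights by a constant, preserving the "same parity" conclusion. The hard part, if one insists on self-containment, is the tightness/uniqueness argument; if one is allowed to cite the Hadamard-code structure, the proof is a one-line weight computation.
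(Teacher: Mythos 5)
Your overall instinct---reduce to Hadamard-matrix structure and then observe that row weights of a (normalized) Hadamard matrix of order at least $4$ are all even---is exactly the right one, and it is what the paper does. But as written your argument has a genuine gap, which you yourself flag as the ``hard part'' and then do not close. Your main route asserts that an OA$(2^M,2^{M-1},2,3)$ is pinned down as (a translate of) $\mathrm{RM}(1,M-1)$, i.e.\ is affinely linear. That is false for $M\geq 5$: there are inequivalent Hadamard matrices of order $16$, so there are OA$(32,16,2,3)$'s not equivalent to the Sylvester/Reed--Muller one, and a general such array need not be linear at all. The weaker claim you fall back on---that the array is equivalent to $[H\mid -H]^T$ for \emph{some} Hadamard matrix $H$ of order $2^{M-1}$---would indeed suffice (all row weights are then $0$, $2^{M-2}$ or $2^{M-1}$, all even for $M\geq 3$, and column complementations flip all row-weight parities simultaneously), but you do not prove it, and Lemma~\ref{haddd} cannot be cited for it: that lemma is purely an existence equivalence, not a structure theorem about every array with those parameters. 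Your ``alternative elementary route'' via character sums is likewise only a sketch with no computation carried out.

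The paper's proof supplies precisely the structural step you are missing, and does so directly rather than by classification. Sort the rows by their values in the first two columns. The strength-$3$ property applied to column triples containing column $1$ (respectively column $2$, respectively both) shows that (a) the $2^{M-1}$ rows with first entry $1$ form a Hadamard matrix in $0/1$ form, (b) the $2^{M-1}$ rows with first entry $0$ do as well, and (c) so does the union of the $(1,1)$-block and the $(0,1)$-block. Since the property ``all rows have weights of equal parity'' holds for normalized Hadamard matrices of order at least $4$ and is preserved by Hadamard equivalence, each of these three overlapping sets of rows has constant weight parity, and the overlaps force all $2^M$ rows to agree. If you want to salvage your write-up with minimal change, replace the appeal to tightness/uniqueness with this partition argument (or with an explicit citation of the theorem that every OA$(8m,4m,2,3)$ arises from a Hadamard matrix, which is essentially the same computation).
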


\begin{proof}
Let $K$ be an OA$(2^M,2^{M-1},2,3)$. 
Without loss of generality  assume that, restricting ourselves to the first two  columns of $K$,  
the first $2^{M-2}$ rows contain the ordered pairs $(1,1)$, the next $2^{M-2}$ rows contain the ordered pairs $(1,0)$, the next $2^{M-2}$ rows contain the ordered pairs $(0,1)$ and the final $2^{M-2}$ rows contained the ordered pairs $(0,0)$. 

 For the rest of the proof, we assume that in a Hadamard matrix each $-1$ has been replaced by $0$. 
It follows, from the strength $3$ property of the orthogonal array, that: (a) the first $2^{M-1}$ rows form a Hadamard matrix; 
(b) the last $2^{M-1}$ rows form a Hadamard matrix; and
(c) the first $2^{M-2}$ rows
together with the third set of  $2^{M-2}$ rows forms a Hadamard matrix. 

Now, in a normalized Hadamard matrix of order at least $4$, the weight of any row or column is even. Equivalent Hadamard matrices are formed by rearranging rows or columns, taking a transpose or swapping $0$ with $1$ in any row or column. All of these equivalences preserve the property that the weight of each pair of rows shares the same parity. The result follows.  
\end{proof}

\begin{corollary}
There exists neither  
an array of type $I_4(8,8,2,3)$ nor an array of type  
$I_8(16,16,2,3)$. 
\label{strength3exceptions}
\end{corollary}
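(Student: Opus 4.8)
The plan is to deduce both non-existence statements directly from Lemma \ref{strength3conditions}, which is precisely tailored to the ``extreme'' strength-$3$ parameters occurring here: for $I_4(8,8,2,3)$ we are in the case $M=3$, and for $I_8(16,16,2,3)$ in the case $M=4$, and in both cases the degree of the row/column orthogonal arrays is exactly $k=2^{M-1}$. So the first thing to check is that we are genuinely inside the hypothesis of that lemma (the degree equals $2^{M-1}$, not merely is bounded by it), which is the only point requiring any care.

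Suppose for contradiction that $D$ is an array of type $I_k(2^M,2^M,2,3)$ with $k=2^{M-1}$, so that $(k,M)\in\{(4,3),(8,4)\}$. By definition, the $2^M$ vectors appearing in any fixed row of $D$ are the rows of an OA$(2^M,2^{M-1},2,3)$, and likewise for any fixed column. Hence by Lemma \ref{strength3conditions} the entries of each row of $D$ all have weights of a single parity, and the same holds for each column; call a row (respectively column) \emph{even} or \emph{odd} accordingly.

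Next I would argue that all rows have the same type. If some row were even and some row odd, then an arbitrary column of $D$ would meet the even row in a vector of even weight and the odd row in a vector of odd weight, contradicting the fact (again Lemma \ref{strength3conditions}) that this column is monochromatic in parity. Therefore every row of $D$ is of the same type, and so every one of the $2^{2M}$ entries of $D$ has weight of that one parity. But $D$ is a row-column factorial design with $\lambda = 2^{2M}/2^{k}\geq 1$ (both admissible, so indeed $\lambda\geq 1$), so its entries form the multiset $\lambda\times[2]^{k}$; in particular $D$ must contain the all-zero vector, of weight $0$, and the vector $\mathbf{e}_1$, of weight $1$, which have different parities. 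This contradiction establishes that neither $I_4(8,8,2,3)$ nor $I_8(16,16,2,3)$ exists.

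I do not expect a real obstacle: the argument is a short counting/parity contradiction built on Lemma \ref{strength3conditions}. The only things to be careful about are the bookkeeping that the parameters fall under that lemma, and the remark that the replication number is at least one so that the factorial design genuinely contains vectors of both weight parities.
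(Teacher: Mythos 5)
Your proof is correct and follows essentially the same route as the paper: apply Lemma \ref{strength3conditions} to every row and column, conclude that all entries of the array share one weight parity, and contradict the fact that a factorial design must contain vectors of both parities. You merely spell out the intermediate step (using a column to propagate the parity type across all rows) that the paper leaves implicit.
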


\begin{proof}
If an array of type $I_4(8,8,2,3)$ exists, then the vectors in any row or column, by definition, form an OA$(8,4,2,3)$. Thus, from the previous lemma, the weight of every vector in the array has the same parity. Hence the vectors in all the cells of the array do not form a factorial design. 
Similarly, there does not exist an array of type $I_8(16,16,2,3)$. 
\end{proof}

We now focus on proving Theorem \ref{strength3} in the case where $M\geq 5$.     
We will use Theorem \ref{thm:FR.Polynomilas.existence.} for this case. We first need some preliminary lemmas.

We remind the reader that a set $S$ of vectors is $t$-independent if and only if each subset of $S$ of size $t$ is independent. 

\begin{lemma} \label{lem:circulant set weight 4}
	\sloppy Let $ C $ be a set consisting of $ M $ cyclic permutations of the vector $ (1,1,1,1,0,0, \dots , 0) $ over $ \mathbb{F}_2^M $, where $ M \geq 5 $ and $ M \neq 6 $. Then the vectors in $ C $ are $ 3-$independent.
\end{lemma}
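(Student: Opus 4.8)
The plan is to show that any three of these $M$ cyclic shifts are linearly independent over $\mathbb{F}_2$. Write the base vector as $\mathbf{r}_0 = (1,1,1,1,0,\dots,0)$ and let $\mathbf{r}_i$ be its cyclic shift by $i$ positions, for $i$ in a set of $M$ consecutive residues mod $M$ (say $i \in \{0,1,\dots,M-1\}$, i.e.\ all cyclic shifts). Since each $\mathbf{r}_i$ is nonzero and no two are equal (as $M \geq 5$ forces the supports, which are four cyclically consecutive indices, to differ), $1$-independence and $2$-independence are automatic. So the whole content is: no three distinct shifts $\mathbf{r}_a, \mathbf{r}_b, \mathbf{r}_c$ sum to zero over $\mathbb{F}_2$.

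First I would reduce to a statement about supports. The support of $\mathbf{r}_i$ is the ``interval'' $B_i = \{i, i+1, i+2, i+3\}$ taken mod $M$. Over $\mathbb{F}_2$, $\mathbf{r}_a + \mathbf{r}_b + \mathbf{r}_c = \mathbf{0}$ is equivalent to the symmetric difference $B_a \triangle B_b \triangle B_c = \emptyset$, which (since the three sets all have size $4$) forces each coordinate to lie in an even number of the $B$'s; because three sets are involved and each point lies in $0$, $1$, $2$ or $3$ of them, ``even'' means ``$0$ or $2$'', so every point of $B_a \cup B_b \cup B_c$ lies in exactly two of the three blocks. Counting incidences: $|B_a| + |B_b| + |B_c| = 12$ must equal $2\cdot|B_a \cup B_b \cup B_c|$, so the union has size $6$, and each pairwise intersection $B_a \cap B_b$, $B_a \cap B_c$, $B_b \cap B_c$ is disjoint from the others with sizes summing to $6$; also no point lies in all three. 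I would then argue that two distinct cyclic intervals of length $4$ in $\mathbb{Z}_M$ intersect in a set that is itself a cyclic interval (an ``arc''), of size between $0$ and $3$; with three such pairwise intersections, pairwise disjoint, of total size $6$, the possible size-multisets are $\{3,3,0\}$, $\{3,2,1\}$, $\{2,2,2\}$. In each case I would derive the mutual positions of $a,b,c$ mod $M$ and check that the three arcs cannot be simultaneously realized without two of them overlapping or a point landing in all three blocks — this is where the hypotheses $M \geq 5$ and $M \neq 6$ enter (for $M = 6$, the shifts by $0$, $2$, $4$ of $(1,1,1,1,0,0)$ do sum to zero, which is exactly the excluded case; small $M < 5$ fails because there are too few coordinates).

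An alternative, possibly cleaner, execution: suppose $\mathbf{r}_a + \mathbf{r}_b + \mathbf{r}_c = \mathbf{0}$ and consider the polynomial encoding $x^a + x^b + x^c$ times $(1 + x + x^2 + x^3) = (1+x)(1+x^2)$ in $\mathbb{F}_2[x]/(x^M - 1)$; the equation says this product is $0$ in the quotient ring, i.e.\ $(x^M-1) \mid (x^a+x^b+x^c)(1+x)(1+x^2)$ in $\mathbb{F}_2[x]$. Factoring $x^M - 1$ over $\mathbb{F}_2$ (using that for $M$ odd it is squarefree, and handling the power-of-two part of $M$ separately), one can track which cyclotomic factors must divide $x^a + x^b + x^c$ and conclude this is impossible when $\omega := x^a + x^b + x^c$ has only three terms, unless $M = 6$. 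I expect the divisibility/GCD bookkeeping to be a bit fiddly, so I would likely present the combinatorial ``arc intersection'' argument as the main line and keep the polynomial viewpoint as motivation.

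The main obstacle is the case analysis on how three length-$4$ cyclic arcs in $\mathbb{Z}_M$ can have pairwise-disjoint pairwise intersections totalling $6$ elements: one must be careful that ``interval mod $M$'' behaves well (no wraparound pathologies) precisely because $4 \leq M$, and that the excluded value $M = 6$ is the unique place where a configuration squeezes through — concretely, when $M = 6$ the three pairwise intersections each have size $2$ and tile $\mathbb{Z}_6$, whereas for $M \geq 7$ the total arc length $12$ is too small relative to $M$ for three overlapping length-$4$ windows to cover each point an even number of times, and for $M = 5$ it is forced that some point lies in all three. Once that geometric incompatibility is nailed down, $3$-independence of $C$ is immediate.
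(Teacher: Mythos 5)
Your plan is essentially the paper's argument in a different packaging: both reduce $3$-independence to a statement about how cyclic arcs of length $4$ in $\mathbb{Z}_M$ can overlap. The paper organizes it around \emph{pairs}: since all elements of $C$ have weight $4$, dependence of three vectors means ${\bf u}+{\bf v}={\bf t}$, so it suffices to show the sum of any two distinct elements of $C$ is never in $C$; casing on $s=|\mathrm{supp}({\bf u})\cap\mathrm{supp}({\bf v})|$, the sum has weight $8-2s$, which rules out $s\in\{0,1,3\}$ on weight grounds alone, and for $s=2$ (which forces $M\geq 6$, hence $M\geq 7$) the sum's support is $\{a,a+1,a+4,a+5\}$, not a cyclic run of four consecutive ones. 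Your triple-incidence version works too, but two things should be tightened. First, the decisive step --- ruling out the size-multisets $\{3,3,0\}$, $\{3,2,1\}$, $\{2,2,2\}$ --- is left as a promissory note; it can be collapsed by observing that each block $B_a$ is partitioned by its intersections with the other two (every point lies in exactly two blocks), so $|B_a\cap B_b|+|B_a\cap B_c|=4$ for each of the three blocks, forcing all three pairwise intersections to have size exactly $2$; that pins the shifts to $b=a+2$, $c=a-2$, whereupon $|B_b\cap B_c|=2$ fails for every $M\geq 7$ (and succeeds precisely at $M=6$), while $M=5$ is killed separately since there all pairwise intersections have size $3$. Second, your auxiliary claim that two distinct length-$4$ cyclic intervals always intersect in an arc is false for $M\in\{5,6\}$ (e.g.\ $\{0,1,2,3\}\cap\{3,4,5,0\}=\{0,3\}$ in $\mathbb{Z}_6$); it does hold for $M\geq 7$, which is the only range where you invoke it, but the restriction should be stated. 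With those repairs your argument is correct; the paper's pairwise version simply avoids the three-arc bookkeeping.
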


\begin{proof}
	Note that all the vectors in $ C $ have weight 4. Also, any three vectors ${\bf t}, {\bf u}, {\bf v}$ in $ (\mathbb{F}_2)^M $ are linearly dependent if and only if  ${\bf u} + {\bf v} = {\bf t}$. Now for any two vectors ${\bf u}$ and ${\bf v}$ in $ C $ we have the following possibilities:
	
	
	\textbf{Case I}: There is at most one $ i $ such that $ u_i = v_i = 1 $.
	In this case $ \omega(u+v) = 6 $ and therefore $ {\bf u}+{\bf v} \not \in C$.
	
	\textbf{Case II}: There are exactly two values of $ i $ for which $ u_i = v_i = 1 $.
	In this case $ \omega({\bf u} + {\bf v}) = 4 $ and $M\geq 7$ (since $M\neq 6$).  
	However, notice that the vector ${\bf u} + {\bf v}$ contains the values 
	$1,1,0,0,1,1,0$ at seven consecutive positions (modulo $n$) and therefore does not belong to $C$.
	
	\textbf{Case III}: There are exactly three values of $ i $ for which $ u_i = v_i = 1$. In this case $ \omega({\bf u} + {\bf v}) = 2$.
\end{proof}

\begin{corollary}
	For $ M \geq 5  $ and $ M \neq 6 $, let $ B =  \{e_1, \dots , e_M\} $ be the standard basis for $ \mathbb{F}_2^M $ and $ C $ be the set defined in Lemma \ref{lem:circulant set weight 4}. Then the set $ W = B \cup C $ is $ 3-$independent.
\end{corollary}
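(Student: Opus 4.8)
The claim is that $W = B \cup C$ is $3$-independent, where $B = \{e_1,\dots,e_M\}$ is the standard basis and $C$ consists of the $M$ cyclic shifts of $(1,1,1,1,0,\dots,0)$ over $\mathbb{F}_2^M$, with $M \geq 5$, $M \neq 6$. Since $|B \cap C| = \emptyset$ for these values of $M$ (a weight-$1$ vector is never a weight-$4$ vector), every $3$-subset of $W$ splits into one of four types according to how many of its members lie in $B$ versus $C$: three from $B$, two from $B$ and one from $C$, one from $B$ and two from $C$, or three from $C$. The plan is to dispatch each type in turn, using the basic fact (already invoked in Lemma \ref{lem:circulant set weight 4}) that three vectors over $\mathbb{F}_2$ are dependent precisely when one is the sum of the other two.

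First, three distinct standard basis vectors $e_i, e_j, e_\ell$ are clearly independent, since $e_i + e_j$ has weight $2$ and hence is not equal to $e_\ell$. Second, for two basis vectors $e_i, e_j$ and one vector ${\bf v} \in C$: dependence would force ${\bf v} = e_i + e_j$ (weight $2$), or $e_i = e_j + {\bf v}$, etc.; but $\omega({\bf v}) = 4 \neq 2$ and $\omega(e_j + {\bf v}) \in \{3,5\} \neq 1$, so no dependence is possible. Third, for one basis vector $e_i$ and two vectors ${\bf u}, {\bf v} \in C$: by Lemma \ref{lem:circulant set weight 4} (really its Case III and the sum being in $C$) we know ${\bf u} + {\bf v}$ is not a standard basis vector unless $\omega({\bf u}+{\bf v}) = 1$; but $\omega({\bf u}+{\bf v})$ is even (sum of two even-weight vectors), so $\omega({\bf u}+{\bf v}) \neq 1$, and likewise $\omega(e_i + {\bf u})$ is odd while every element of $C$ has even weight, ruling out $e_i + {\bf u} = {\bf v}$. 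Hence $\{e_i, {\bf u}, {\bf v}\}$ is independent. Finally, three vectors from $C$ are independent by Lemma \ref{lem:circulant set weight 4} itself.

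The parity observations do essentially all the work here, so there is no real obstacle — the only point requiring a moment's care is confirming that $B$ and $C$ are disjoint (so that "$3$-subset of $W$" genuinely means three distinct vectors drawn from the union) and that the mixed cases cannot collapse via weight mismatches; both are immediate once one records that elements of $B$ have weight $1$, elements of $C$ have weight $4$, and sums of two weight-$4$ vectors have even weight. This is all routine, which is presumably why the corollary is stated without proof in the excerpt.
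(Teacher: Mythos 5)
Your proof is correct, and since the paper states this corollary without proof (treating it as immediate from Lemma \ref{lem:circulant set weight 4}), your case analysis by how many of the three vectors lie in $B$ versus $C$, settled by weight and parity comparisons, is exactly the intended argument. The only cosmetic slip is writing $|B\cap C|=\emptyset$ rather than $B\cap C=\emptyset$; the mathematical content is sound throughout.
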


\begin{lemma}
	\sloppy For $N\geq M \geq 5 $ there exists an $ I_k(2^M,2^N,2,3)$
	if and only if $(k,2^M,2^N,2,3)$ is admissible. 
\label{5orbigger}
\end{lemma}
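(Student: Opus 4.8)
Necessity is immediate from the definition: if $I_k(2^M,2^N,2,3)$ exists then its columns are orthogonal arrays $\mathrm{OA}(2^M,k,2,3)$, its rows are $\mathrm{OA}(2^N,k,2,3)$, and its cells carry the factorial design — which is exactly what it means for $(k,2^M,2^N,2,3)$ to be admissible. For the converse I would supply the hypotheses of Theorem \ref{thm:FR.Polynomilas.existence.} with $q=2$, $t=3$: it suffices, for each admissible $(k,2^M,2^N,2,3)$ with $5\le M\le N$, to produce a $k\times M$ matrix $A$ and a $k\times N$ matrix $B$ over $\mathbb{F}_2$ with $[A\,|\,B]$ of rank $k$ such that the rows of $A$, and the rows of $B$, are each $3$-independent. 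Two preliminary facts are worth isolating: a family of vectors of $\mathbb{F}_2^d$ is $3$-independent iff its members are distinct and no three of them sum to $\mathbf{0}$; and the $2^{d-1}$ odd-weight vectors of $\mathbb{F}_2^d$ form such a family, of the maximum possible size (a sum of three odd-weight vectors has odd weight, hence is nonzero). Since admissibility gives $k\le 2^{M-1}\le 2^{N-1}$, there are already enough $3$-independent vectors available in $\mathbb{F}_2^M$ and in $\mathbb{F}_2^N$; the whole difficulty lies in meeting the rank condition simultaneously.

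I would first trim the number of columns. By Lemma \ref{lem:glueing}, gluing two copies of $I_k(2^M,2^N,2,3)$ gives $I_k(2^M,2^{N+1},2,3)$, so it is enough to build the design for the smallest admissible value $N_0=\max\{M,\,k-M\}$ and then iterate upward. For $k\le M$ this is trivial: take $A=B=[\,I_k\mid\mathbf{0}\,]$. For $k>M$, write $s=k-M$ (so $1\le s\le N_0$) and look for
\[
A=\binom{I_M}{P},\qquad B=\binom{Q}{[\,I_s\mid\mathbf{0}\,]},
\]
with $P$ an $s\times M$ matrix and $Q$ an $M\times N_0$ matrix, each having distinct rows of odd weight $\ge 3$; such $P$ and $Q$ exist because $k\le 2^{M-1}$ leaves $2^{M-1}-M\ge s$ usable vectors in $\mathbb{F}_2^M$, and $M\ge 5$ makes $2^{N_0-1}-N_0\ge 2^{M-1}-M\ge M$ usable vectors in $\mathbb{F}_2^{N_0}$. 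Then the row set of $A$ and the row set of $B$ are each $k$ distinct odd-weight vectors, hence $3$-independent, and clearing $P$ against the $I_M$ block shows $\operatorname{rank}[A\,|\,B]=k$ is equivalent to the $s\times N_0$ matrix $[\,I_s\mid\mathbf{0}\,]+PQ$ having full row rank $s$.

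When $N_0-s$ is comfortably large one can place the rows of $Q$ in coordinates disjoint from those carrying $I_s$; then $PQ$ avoids the first $s$ coordinates and $[\,I_s\mid\mathbf{0}\,]+PQ$ visibly has rank $s$. This settles every $k$ bounded away from $2M$. The remaining — and hardest — regime is $k$ near (or equal to) $2M$, in particular all $k=M+N_0\ge 2M$ where $[A\,|\,B]$ is a square $k\times k$ matrix and there is no freedom left to decouple $P$ and $Q$: here the rank condition amounts to requiring a matrix of the form $I+QP$ to be nonsingular (equivalently, by Sylvester's determinant identity over $\mathbb{F}_2$, a matrix $I+PQ$ of the other size). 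I expect this to be the main obstacle. I would handle it by taking $P$ to contain an invertible $M\times M$ block with rows of odd weight $\ge 3$ — for instance a circulant built from an odd-weight symbol chosen coprime to $x^M-1$ — and then choosing $Q$ (again with distinct rows of odd weight $\ge 3$) as a controlled perturbation of the relevant inverse, so that $I+QP$ becomes unitriangular or otherwise manifestly invertible; it is precisely the hypotheses $M\ge 5$ and $k\le 2^{M-1}$ that leave enough weight-$\ge 3$ odd vectors around for these choices, with any small awkward $(M,k)$ dispatched by exhibiting explicit matrices. (For $k\le 2M$ one may alternatively draw the $3$-independent sets from the size-$2M$ circulant family of the corollary to Lemma \ref{lem:circulant set weight 4} instead of from odd-weight vectors.)
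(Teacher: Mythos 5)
Your framework is sound and in outline matches the paper's: both proofs feed a $k\times(M+N)$ matrix $[A\,|\,B]$ into Theorem \ref{thm:FR.Polynomilas.existence.} and both obtain $3$-independence from ``distinct and odd weight'' via Lemma \ref{3strong}. The gap is in the rank condition, and it is not a corner case. With your layout $A=\binom{I_M}{P}$, $B=\binom{Q}{[\,I_s\mid\mathbf{0}\,]}$, full rank reduces (as you correctly say) to $[\,I_s\mid\mathbf{0}\,]+PQ$ having rank $s$, but you only verify this when the rows of $Q$ can be supported off the first $s$ coordinates, which needs $2^{N_0-s-1}-(N_0-s)\ge M$. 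That inequality fails throughout most of the admissible range: for $M=5$ it fails for \emph{every} $s\ge 1$ (already $s=1$ asks for $5$ distinct odd-weight-$\ge 3$ vectors in $\mathbb{F}_2^{4}$, of which there are only $4$), and in general it covers only $s\lesssim M-\log_2 M$ while $s$ runs up to $2^{M-1}-M$. So the ``remaining regime'' is not a fringe near $k=2M$ but essentially the whole lemma, and for it you offer only a sketch (an invertible circulant block inside $P$, $Q$ a ``controlled perturbation'' of its inverse, awkward cases by explicit matrices) without verifying that $Q$ can simultaneously have distinct odd-weight-$\ge 3$ rows and make $I_M+QP$ nonsingular. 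As written this is a plan, not a proof.

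The paper sidesteps this difficulty by a different choice of blocks: its two width-$M$ column blocks are $(I_M,\,C_M-I_M,\,S)^T$ and $(I_M,\,C_M,\,\mathbf{0})^T$, so the column operations $c_{M+i}-c_i\rightarrow c_{M+i}$ triangularize $L$ for \emph{any} $C_M$ and $S$ --- the rank condition costs nothing. The price is that the relevant rows of $B$ include the weight-$4$ circulant vectors of $C_M$, which are even-weight, so their $3$-independence alongside the standard basis requires the separate argument of Lemma \ref{lem:circulant set weight 4} and its corollary (with an explicit patch at $M=6$). If you wish to keep your layout you must actually exhibit $P$ and $Q$ with $\det(I_M+QP)\ne 0$ for all $M+1\le k\le 2^{M-1}$; the cleaner repair is to redesign the blocks so that full rank is structurally forced, as the paper does.
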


\begin{proof}
	From Lemma \ref{strengththree} and Corollary \ref{cor:blowup}, it suffices to assume $k = \min \{ 2^{M-1}, M+N \}$.
	
	We split the proof into different cases. In each case we define a matrix $L$  
	satisfying the required conditions of Theorem \ref{thm:FR.Polynomilas.existence.}. 

	\textbf{Case I}: $ M \neq 6 $. Let $ C_M $ be an $M\times M$ matrix such that  the rows are the elements of the set $C$ defined in Lemma \ref{lem:circulant set weight 4} with the main diagonal of $C_M$ containing only entry $1$. 
	Let $ D $ be the set of all the rows in $ I_M $ and $ C_M - I_M $. Let $ W $ be the set of all vectors of odd weight in $ \mathbb{F}_2^M $. By Lemma \ref{3strong}, $W$ is a 3-independent set of vectors. Let 
	$S$ be a $(k-2M)\times M$ matrix such that each row is a distinct element in $ W \setminus D $.
Then the matrix $L$ is as follows. 
	\begin{equation} \label{eq:matrix.L}
		L = \left(\renewcommand\arraystretch{1.8} \begin{tabular}{>{$} P{2.0cm} <{$} | >{$} P{2.0cm} <{$} : >{$} P{2.0cm} <{$} }
			I_M		 & I_M & \textbf{0} \\ \hdashline
			C_M-I_M  & C_M & \textbf{0} \\ \hdashline 
			S & \textbf{0} & I_{k-2M} \\
		\end{tabular}\right)
	\end{equation}

	\textbf{Case II}: When $ M = 6 $. In this case we can take the above matrix $ L $ using the following $ C_M $:
	 \begin{equation*}
		C_M = \left(\begin{array}{cccccc}
			1 & 1 & 1 & 1 & 0 & 0 \\
			0 & 1 & 1 & 1 & 1 & 0 \\
			0 & 0 & 1 & 1 & 1 & 1 \\
			1 & 0 & 0 & 1 & 1 & 1 \\
			1 & 0 & 1 & 0 & 1 & 1 \\
			1 & 0 & 1 & 1 & 0 & 1 \\
		\end{array} \right).
	\end{equation*}
\end{proof}

\begin{lemma}
    There exists  $I_3(8,8,2,3)$, $I_4(8,16,2,3)$, $I_7(16,16,2,3)$ and $I_8(16,32,2,3)$.
\end{lemma}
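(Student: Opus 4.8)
The plan is to exhibit each of the four small designs $I_3(8,8,2,3)$, $I_4(8,16,2,3)$, $I_7(16,16,2,3)$ and $I_8(16,32,2,3)$ directly, using the abelian machinery of Section 3 (Theorems \ref{bigdeal1} and \ref{bigdeal}) wherever possible and an explicit rearrangement otherwise. Throughout, the strategy is: pick a convenient linear orthogonal array $\langle A\rangle$ of strength $3$ to serve as the first row $R$, pick an orthogonal array $G$ of strength $3$ with the right number of rows to serve as the first column $C$, verify that $GA^\perp$ is a full-strength orthogonal array (so that $C\boxplus R$ is a genuine factorial design by Theorem \ref{bigdeal1}), and conclude via Theorem \ref{bigdeal} that $C\boxplus R$ has type $I_k(m,n,2,3)$; the case $I_3(8,8,2,3)$ will need a non-abelian construction since (by the parity obstruction of Lemma \ref{strength3conditions}) no abelian example can work when $m=n=2^{k-1}$ is forced.

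For $I_4(8,16,2,3)$: here $2^N=2^4=2^k$, so every row is forced to be a full factorial design, and Lemma \ref{letsgettrivial} applies verbatim — take $R$ to be the $\mathrm{OA}(16,4,2,3)$ whose rows are all of $[2]^4$, take $C$ to be any $\mathrm{OA}(8,4,2,3)$ (e.g. the $\mathrm{OA}(2^3,4,2,3)$ consisting of all odd-weight vectors of length $4$, guaranteed by Lemma \ref{3strong}), and then $C\boxplus R$ is immediately of type $I_4(8,16,2,3)$. For $I_8(16,32,2,3)$: again $2^N=2^5=2^k$ is NOT the case — here $k=8$, $m=16=2^4$, $n=32=2^5$, so $M=4<5=N$ and $k=2^{M-1}=8=\min\{2^{M-1},M+N\}$; one applies Theorem \ref{bigdeal}. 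Take $A$ to be a $5\times 8$ full-rank matrix whose rows generate an $\mathrm{OA}(2^5,8,2,3)$ (the odd-weight code of length $8$ contains such a linear subcode, or one builds it from a Hadamard matrix of order $16$ as in the Hadamard-code construction described after Lemma \ref{haddd}); let $A^\perp$ be the $8\times 3$ nullspace generator. Choose $G=\mathrm{OA}(16,8,2,3)$ so that $GA^\perp=\mathrm{OA}(16,3,2,3)$, i.e. the three columns of $GA^\perp$ must hit every element of $[2]^3$ exactly twice. One finds a suitable $G$ by starting from a strength-$3$ Hadamard code on $16$ rows and applying column permutations / coordinate sums so that the three designated linear combinations become a strength-$3$ triple of columns; since $16=2\cdot 2^3$, this is a genuine (non-trivial but checkable) condition. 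Then $G\boxplus\langle A\rangle$ has type $I_8(16,32,2,3)$ by Theorem \ref{bigdeal}.

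For $I_7(16,16,2,3)$: now $m=n=16=2^4$ and $k=7<2^{M-1}=8$, so the parity obstruction of Lemma \ref{strength3conditions} does \emph{not} bite (it is sharp only at $k=2^{M-1}$), and an abelian construction is available. Take $A$ a $4\times 7$ full-rank matrix generating $\langle A\rangle=\mathrm{OA}(2^4,7,2,3)$ — this requires the $7$ columns of $A$ to be $3$-independent over $\mathbb{F}_2$, which holds for the $7$ nonzero columns of length... wait, length $4$ has only $15$ nonzero vectors and we need a $3$-independent set; the standard choice is the $7$ columns coming from the $\mathrm{OA}(16,7,2,3)$ that is the even-weight-vectors-plus code dual to the Hamming-type code, equivalently the punctured first-order Reed–Muller code $\mathcal{RM}(1,3)$ which is exactly the length-$8$, dimension-$4$, strength-$3$ code — restrict to $7$ coordinates. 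Let $A^\perp$ be the $7\times 3$ nullspace generator; choose $G=\mathrm{OA}(16,7,2,3)$ with $GA^\perp=\mathrm{OA}(16,3,2,3)$ and apply Theorem \ref{bigdeal}. Finally, for $I_3(8,8,2,3)$: every row and column must be an $\mathrm{OA}(8,3,2,3)$, i.e. all of $[2]^3$ in some order; so one needs an $8\times 8$ array, each row and each column a permutation of $[2]^3$, with each vector appearing exactly $64/8=8$ times overall — this is just a Latin square of order $8$ on the symbol set $[2]^3$, and any Latin square of order $8$ works since strength $3$ on $3$ coordinates of a length-$3$ vector is automatic once the row/column is all of $[2]^3$. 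Take, say, the Cayley table of $(\mathbb{Z}_2)^3$, relabel, done.

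\textbf{The main obstacle} is the pair $I_7(16,16,2,3)$ and $I_8(16,32,2,3)$: for these one must produce a strength-$3$ orthogonal array $G$ on $16$ rows whose specified linear image $GA^\perp$ is \emph{itself} a strength-$3$ array on only $3$ columns — a condition that is not automatic and that the parity lemma (Lemma \ref{strength3conditions}) shows can genuinely fail for the "wrong" strength-$3$ code on $16$ rows. The resolution is to start from the concrete Hadamard code $[H\mid -H]^T$ of order $16$ (available since $H(16)$ exists), which has strength $3$, and to track through the Hadamard-code-to-OA passage exactly which triples of coordinates (or coordinate-sums) form a strength-$3$ sub-OA; choosing $A$ so that the nullspace combinations land on such a triple is then a finite verification. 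In the write-up this amounts to displaying the two $16\times 7$ (resp. $16\times 8$) generator/OA matrices explicitly and checking $GA^\perp$ by inspection, which is routine once the matrices are in hand.
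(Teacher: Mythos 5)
Your treatment of the two easy cases matches the paper: $I_3(8,8,2,3)$ and $I_4(8,16,2,3)$ both follow from Lemma \ref{letsgettrivial} by taking one side to be the full factorial $[2]^k$ (the Cayley table of $(\mathbb{Z}_2)^3$ in the first case, and all even- or odd-weight vectors of length $4$ as the $\mathrm{OA}(8,4,2,3)$ in the second). One correction there: your opening claim that $I_3(8,8,2,3)$ ``will need a non-abelian construction'' because of the parity obstruction is wrong, and is contradicted by your own construction --- the Cayley table of $(\mathbb{Z}_2)^3$ is exactly $C\boxplus R$ with $C=R=[2]^3$, hence abelian. Lemma \ref{strength3conditions} obstructs $I_4(8,8,2,3)$ and $I_8(16,16,2,3)$ (where the degree equals $2^{M-1}$), not $I_3(8,8,2,3)$.

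For $I_7(16,16,2,3)$ and $I_8(16,32,2,3)$ there is a genuine gap: the entire content of the lemma for these parameters is an explicit construction, and you never produce one. You correctly identify that the crux is finding $G$ and $A$ with $GA^\perp$ an $\mathrm{OA}(16,3,2,3)$, and you correctly flag this as ``not automatic,'' but you then defer it to ``a finite verification \dots once the matrices are in hand'' without exhibiting the matrices or proving such a pair exists. The paper closes exactly this gap by a different (and cleaner) route: it invokes Theorem \ref{thm:FR.Polynomilas.existence.} with a single explicit $8\times 9$ matrix $L=[A|B]$ of full rank whose $8\times 4$ left block and $8\times 5$ right block each have $3$-independent rows; this yields $I_8(16,32,2,3)$ directly, and deleting the last row and column of $L$ yields $I_7(16,16,2,3)$. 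Since both your row and column arrays would be linear anyway, your condition ``$GA^\perp$ is an $\mathrm{OA}(16,3,2,3)$'' reduces (via Lemma \ref{fundam}) to a rank condition equivalent to the paper's independence conditions, so your framework is sound --- but without the explicit matrix the existence claim is not established. To repair the proof you must display $L$ (or your $G$, $A$ and $A^\perp$) and verify the independence/rank conditions.
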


\begin{proof}
    The eight binary vectors of dimension $3$ give the rows of an OA$(8,3,2,3)$. Thus by 
Lemma \ref{letsgettrivial}, there exists an array of type $I_3(8,8,2,3)$.
Next, the $8$ binary vectors of dimension $4$ and even weight give the rows of an OA$(8,4,2,3)$.
Using Lemma \ref{letsgettrivial} again, there exists an array of type   $I_4(8,16,2,3)$.

By Theorem \ref{thm:FR.Polynomilas.existence.} and the following array $L$, there exists  
$I_8(16,32,2,3)$. Moreover, we get $I_7(16,16,2,3)$ for free by deleting the last row and column, as indicated by dotted lines.  
\begin{equation*}
		L = \left(\begin{array}{cccc|cccc: c}
			1&0&0&0&1&0&0&0&0 \\
			0&1&0&0&0&1&0&0&0 \\
			0&0&1&0&0&0&1&0&0 \\
			0&0&0&1&0&0&0&1&0 \\ 
			1&1&1&0&1&1&0&1&0 \\
			1&1&0&1&1&0&1&1&0 \\
			1&0&1&1&0&1&1&1&0 \\ \hdashline
			0&1&1&1&1&1&1&0&1 \\
		\end{array} \right).
	\end{equation*}
\end{proof}

We now have all the tools, base cases and exceptions we need to prove Theorem \ref{strength3}. 
From Lemma \ref{5orbigger},  
  we may restrict ourselves to the case $M\in \{3,4\}$ and $N\geq M$. 
If $M=3$ then by Lemma \ref{strengththree}, 
$3\leq k\leq 2^{M-1}=4$. 
An $I_3(8,8,2,3)$ exists by the previous lemma. 
Thus by Corollary \ref{cor:blowup}, there exists   
 $I_3(2^M,2^N,2,3)$ whenever $M,N\geq 3$.
Next, $I_4(8,8,2,3)$ does not exist by Corollary \ref{strength3exceptions}. 
However $I_4(8,16,2,3)$ exists by the previous lemma. 
Thus by Corollary \ref{cor:blowup} there exists an array of type $I_4(2^M,2^N,2,3)$ whenever $M\geq 3$ and $N\geq 4$.
 
Finally suppose that $N\geq M=4$. 
 By Lemma \ref{strengththree},  $3\leq k\leq 8$. Now, $I_8(16,16,2,3)$ does not exist by Corollary \ref{strength3exceptions} but  $I_7(16,16,2,3)$ exists by the previous lemma and $I_8(16,32,2,3)$. 
The result then follows by Corollary \ref{cor:blowup}. 

\section{Conclusion}

We first discuss some limitations to the approach given in Section 5.  
Firstly, the idea in Theorem \ref{eightisenough} cannot work for theoretical reasons when $k\leq 6$, and for  computational reasons (inspection of possible cases)  when $k=7$ and $m\in \{3,5\}$. 
 The reason that $k\geq 7$ is necessary for the approach is as follows. 
 The counting argument in the following lemma shows that if $m$ is odd, then
in any OA$(4m,n,2,2)$, if the sum of $\ell$ columns has weight $2m$ (that is, contains $2m$ occurrences of $1$), then $\ell\equiv 1$ or $2$ (mod $4$). In turn, from Corollary \ref{conditionsonk}, the columns of $K$ must be distinct and have weight at least $2$. 
This precludes a suitable $K$ for $k\leq 6$. 

\begin{lemma}
	Let $ H $ be an OA$(4m,\ell,2,2)$, where $ m $ is an odd integer. Let 
	$ \textbf{v} $ be the sum of columns of $ H $ over ${\mathbb F}_2$. 
	\begin{itemize}
		\item If $\ell \equiv 0$  or  $3 \pmod{4} $ then $\omega( \textbf{v}) \equiv 0 \pmod{4} $.
		\item If $\ell \equiv 1$ or  $2 \pmod{4} $ then $\omega( \textbf{v}) \equiv 2 \pmod{4} $.
	\end{itemize}  
 \label{countingargument}
\end{lemma}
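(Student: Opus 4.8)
The plan is to count, modulo $4$, the number of rows of $H$ on which $\mathbf{v}$ takes the value $1$, by relating this quantity to the pairwise inner products of the columns. Let $\mathbf{c}_1,\dots,\mathbf{c}_\ell$ be the columns of $H$, each a binary vector of length $4m$. Since $H$ has strength $2$, each column has weight exactly $2m$, and each pair of distinct columns $\mathbf{c}_i,\mathbf{c}_j$ agrees in the value $1$ on exactly $m$ rows and in the value $(1,1)$, $(1,0)$, $(0,1)$, $(0,0)$ on exactly $m$ rows each. The standard inclusion–exclusion identity for the parity of a sum of bits gives, working over the integers,
\begin{equation*}
\omega(\mathbf{v}) \;=\; \sum_{i} \omega(\mathbf{c}_i) \;-\; 2\sum_{i<j} \langle \mathbf{c}_i,\mathbf{c}_j\rangle \;+\; 4\sum_{i<j<k}\#\{\text{rows with }1\text{ in columns }i,j,k\} \;-\;\cdots,
\end{equation*}
but this "alternating" form is awkward because the higher terms are not controlled by strength $2$. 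Instead I would use the cleaner device of evaluating everything modulo $4$ directly.

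First I would pass to $\pm1$ notation: replace each $0$ by $+1$ and each $1$ by $-1$, so that column $\mathbf{c}_i$ becomes a $\pm1$ vector $\mathbf{a}_i$ with entry sum $0$ (weight $2m$ of $-1$'s), and the strength-$2$ condition becomes $\langle \mathbf{a}_i,\mathbf{a}_j\rangle = 0$ for $i\neq j$ and $\langle\mathbf{a}_i,\mathbf{a}_i\rangle = 4m$. Now consider $\mathbf{s} = \sum_{i=1}^\ell \mathbf{a}_i \in \mathbb{Z}^{4m}$. On one hand,
\begin{equation*}
\|\mathbf{s}\|^2 \;=\; \sum_{i}\langle\mathbf{a}_i,\mathbf{a}_i\rangle + 2\sum_{i<j}\langle\mathbf{a}_i,\mathbf{a}_j\rangle \;=\; 4m\ell.
\end{equation*}
On the other hand, in each row the entry of $\mathbf{s}$ is $\ell - 2(\text{number of }1\text{'s among columns }1,\dots,\ell\text{ in that row})$, so it is an integer $\equiv \ell \pmod 2$; write it as $\ell - 2b_r$ for row $r$, where $b_r\in\{0,\dots,\ell\}$ counts the $1$'s. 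Then $\mathbf{v}$ has a $1$ in row $r$ precisely when $b_r$ is odd, i.e.\ when $\ell - 2b_r \equiv \ell - 2 \pmod 4$; equivalently the entry of $\mathbf{s}$ lies in $\{\ell-2,\ell-6,\ell-10,\dots\}$. So I would split the $4m$ rows according to $s_r \bmod 4$ and use $\sum_r s_r = 0$ (the total entry sum, since each $\mathbf{a}_i$ sums to $0$) together with $\sum_r s_r^2 = 4m\ell$ to pin down the counts modulo $4$.

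Concretely: let $N_0$ be the number of rows with $s_r\equiv \ell\pmod 4$ (these have $b_r$ even) and $N_2$ the number with $s_r\equiv \ell-2\pmod4$ (these have $b_r$ odd); then $\omega(\mathbf v)=N_2$ and $N_0+N_2=4m$, so $\omega(\mathbf v)\equiv N_2 \pmod 4$ is what we want, and $N_0 \equiv -N_2 \pmod 4$. The two global constraints $\sum s_r = 0$ and $\sum s_r^2 = 4m\ell$ reduce modulo $8$ (or modulo $16$, as needed) to congruences forcing $N_2 \bmod 4$. The cleanest route is: since each $s_r \equiv \ell \pmod 2$, write $s_r = \ell - 2b_r$ and compute $\sum_r s_r^2 = 4m\ell^2 - 4\ell\sum_r b_r + 4\sum_r b_r^2 = 4m\ell$, and $\sum_r s_r = 4m\ell - 2\sum_r b_r = 0$ giving $\sum_r b_r = 2m\ell$; substituting yields $\sum_r b_r^2 = m\ell + \ell\cdot 2m\ell - m\ell^2 = m\ell + m\ell^2 = m\ell(\ell+1)$. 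Since $b_r^2 \equiv b_r \pmod 2$, we get $\sum_r b_r^2 \equiv \sum_r b_r \pmod 2$, i.e.\ $m\ell(\ell+1)\equiv 2m\ell \pmod 2$, which is automatic. To get the stronger mod-$4$ statement one uses $b_r^2 \equiv b_r \pmod{2}$ more carefully together with the fact that $N_2 = \#\{r: b_r \text{ odd}\} \equiv \sum_r b_r^2 - \sum_r b_r \pmod{?}$ — more precisely $b_r^2-b_r = b_r(b_r-1)\equiv 0\pmod 2$ always, and $\equiv 2\pmod 4$ exactly when $b_r\equiv 2,3\pmod4$, so this counts a different residue class and one iterates.

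\textbf{Main obstacle.} The genuine difficulty is that strength $2$ only controls the \emph{second} moment $\sum_r s_r^2$ (equivalently pairwise column products); a clean mod-$4$ conclusion about $\omega(\mathbf v)=N_2$ seems to want information about $\sum_r s_r^2$ modulo $16$, and one must check that the parity of $m$ (oddness) is exactly what makes $4m\ell \bmod 16$ depend on $\ell \bmod 4$ in the claimed way. I expect the bookkeeping to come down to the identity $\sum_r b_r^2 = m\ell(\ell+1)$ derived above: since $\sum_r b_r = 2m\ell$ is even and $\sum_r b_r^2 \equiv \sum_r b_r \pmod 2$, the quantity $\sum_r (b_r^2 - b_r) = \sum_r b_r(b_r-1)$ is divisible by $2$, and dividing by $2$ one obtains $\tfrac12\sum_r b_r(b_r-1) = \tfrac12 m\ell(\ell+1) - m\ell = \tfrac{m\ell(\ell-1)}2$. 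Now $N_2 = \#\{b_r \text{ odd}\}$, and $\tfrac12 b_r(b_r-1)$ is odd iff $b_r \equiv 2,3 \pmod 4$; tracking this through a further halving step gives $N_2 \bmod 4$ in terms of $\tfrac{m\ell(\ell-1)}{2} \bmod 2$. Using that $m$ is odd, $\tfrac{m\ell(\ell-1)}2 \equiv \tfrac{\ell(\ell-1)}2 \pmod 2$, and $\tfrac{\ell(\ell-1)}{2}$ is even precisely when $\ell \equiv 0,1 \pmod 4$ and odd when $\ell\equiv 2,3\pmod 4$. Combining with the complementary count and the constraint $N_0+N_2=4m\equiv 0\pmod 4$ then yields exactly: $\omega(\mathbf v)\equiv 0\pmod 4$ when $\ell\equiv 0,3\pmod 4$ and $\omega(\mathbf v)\equiv 2\pmod 4$ when $\ell\equiv 1,2\pmod 4$. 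I would present this as a short sequence of modular identities, being careful that every division by $2$ is justified by an established parity, and flag the oddness of $m$ at the one step where it is used.
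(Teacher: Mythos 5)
Your derivation of the key identity is exactly the paper's argument: double-counting, via the strength-$2$ property, the $(1,1)$ pairs lying in a common row gives $\sum_r \binom{b_r}{2} = m\binom{\ell}{2}$, equivalently $\sum_r b_r^2 = m\ell(\ell+1)$. (The paper writes $x_i$ for your $b_r$ and counts pairs directly rather than going through the $\pm1$ Gram computation, but the two calculations are the same.) So the heart of the proof is in place.

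The gap is in the final step, where you extract $\omega(\mathbf{v}) \bmod 4$ from this identity. The route you sketch --- passing to $\tfrac12 b_r(b_r-1)$ and noting it is odd iff $b_r \equiv 2,3 \pmod 4$ --- yields only the parity of $\#\{r : b_r \equiv 2 \text{ or } 3 \pmod 4\}$, i.e.\ one bit of information about the wrong residue classes; a single parity cannot by itself determine the two-bit quantity $\omega(\mathbf{v}) \bmod 4$, and the ``further halving step'' you allude to is never carried out. (The route can be rescued, but only by also invoking $\sum_r b_r \equiv 2m\ell \pmod 4$ and combining the two congruences linearly --- which you do not do.) The clean finish, which is what the paper does, is the observation that $b_r^2 \equiv 1 \pmod 4$ when $b_r$ is odd and $b_r^2 \equiv 0 \pmod 4$ when $b_r$ is even, whence
\[
\omega(\mathbf{v}) \;=\; \#\{r : b_r \text{ odd}\} \;\equiv\; \sum_{r} b_r^2 \;=\; m\ell(\ell+1) \pmod 4 .
\]
Since $m$ is odd, $m\ell(\ell+1) \equiv 0 \pmod 4$ exactly when $4 \mid \ell(\ell+1)$, i.e.\ when $\ell \equiv 0$ or $3 \pmod 4$, and $\equiv 2 \pmod 4$ otherwise (as $\ell(\ell+1)$ is always even). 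With that one line inserted in place of your final paragraph, the proof is complete and coincides with the paper's.
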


\begin{proof}
	Let $ x_i $ be the weight of the $i$th row of $ H $. The total number of $(1,1)$ pairs such that both of them lie in the same row is given by $ \sum_{i=1}^{4m} \binom{x_i}{2} $. Also, since each pair of columns contain exactly $ m $ $(1,1)$ pairs, therefore this number can also be given by $ m \binom{\ell}{2} $. Thus we have,
	\begin{equation*}
		\sum_{i=1}^{4m} \binom{x_i}{2} = m \binom{\ell}{2}
	\end{equation*}
	which simplifies to:
	\begin{equation} \label{eq:x_i^2}
		\sum_{i=1}^{4m} x_i^2 = m\ell(\ell+1). 
	\end{equation}
	
	Also notice that, $ x_i^2 \equiv 1 \pmod{4}$ if $ x_i $ is odd and  $ x_i^2 \equiv 0 \pmod{4}$ otherwise. Thus we have:
 $$\omega({\mathbf v}) = 
  \sum_{i=1}^{4m} (x_i \pmod{2}) 
  = \sum_{i=1}^{4m} (x_i^2 \pmod{2}) 
  = \sum_{i=1}^{4m} (x_i^2 \pmod{4}).$$  
   The result is now follows from (\ref{eq:x_i^2}) and the fact that $ m $ is odd.
\end{proof}

We have intentionally structured our paper so that abelian and non-abelian constructions are distinguished. By inspection, we have determined that there does not exist an abelian $I_5(12,8,2,2)$; a non-abelian example is given in Lemma \ref{specifics}.
 However we do not know at this stage whether there are infinitely many parameters for which their exists only a non-abelian binary strength $2$ row-column factorial design. 

As observed in the introduction, finding necessary and sufficient conditions for the existence of a strength $2$ binary row-column factorial design depends on the Hadamard conjecture. 
However, the following may be more within reach.
\begin{conjecture}
If there exists a Hadamard matrix of order $4m$, then there exists an $I_k(4m,4n,2,2)$ for any $n\geq m$, $2^k|16mn$ and $k\leq 4m-1$, with the exception $m=1$ and $n$ is odd.  
\end{conjecture}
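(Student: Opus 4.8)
The necessity of the three conditions is already in hand. Since $n\ge m$ makes $4m$ the smaller dimension, Lemma~\ref{2strong} gives $k\le 4m-1$; the replication $\lambda=16mn/2^{k}$ must be a positive integer, forcing $2^{k}\mid 16mn$; and the excluded family is Lemma~\ref{thm:exception1}, which rules out $I_{3}(4,4n,2,2)$ for $n$ odd (for $k=2$ the design exists). So the work is sufficiency, and the plan is to reduce an arbitrary column count to a power of $2$. Write $4n=2^{c}\ell$ with $\ell$ odd and $c=v_{2}(4n)\ge 2$; then $2^{k}\mid 16mn$ reads $k\le v_{2}(4m)+c$. The strategy is to (i)~construct $I_{k}(4m,2^{c},2,2)$ and then (ii)~multiply the column count by $\ell$ via the blow-up Corollary~\ref{cor:blowup}, with column concatenation (Lemma~\ref{lem:glueing}) available for bookkeeping. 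A caveat: $I_{k}(4m,2^{c},2,2)$ needs an \textup{OA}$(2^{c},k,2,2)$, hence $k\le 2^{c}-1$; when $v_{2}(4m)$ is large enough that $k>2^{c}-1$ one cannot route through a power-of-two column count and must instead build $I_{k}(4m,4n,2,2)$ directly from an \textup{OA}$(4n,k,2,2)$ that uses the odd factor $\ell$ essentially, in the manner of Corollary~\ref{yesyes2}.

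For step~(i), if $c\ge k$ then an \textup{OA}$(4m,k,2,2)$ exists by Lemmas~\ref{haddd} and~\ref{subarrays} (using the hypothesised Hadamard matrix of order $4m$), so Lemma~\ref{letsgettrivial} yields $I_{k}(4m,2^{k},2,2)$ and blowing the columns up by $2^{c-k}$ finishes. If $c<k$ one must borrow $r:=k-c$ factors of $2$ from the rows, with $r\le v_{2}(4m)$ by admissibility. The case $r=2$ is essentially Theorem~\ref{biggercases}: its parameter $b$ may be taken equal to $m$ for any $m$, so a Hadamard matrix of order $4m$ already gives $I_{k}(4m,2^{k-2},2,2)$ for all $9\le k\le 4m-1$. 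For $k\le 8$ with $r=2$ one patches in the base designs $I_{4}(12,12,2,2)$, $I_{5}(12,8,2,2)$, $I_{6}(12,16,2,2)$ (Lemma~\ref{specifics}) and $I_{7}(12,32,2,2)$ (Corollary~\ref{yesyes2}), together with the power-of-two row cases from Theorem~\ref{biggerthanelvis} and the $20$-row family from the corollary to Theorem~\ref{eightisenough}; the one remaining instance, $I_{8}(12,2^{6},2,2)$, is precisely Conjecture~\ref{whoknows} fed through Theorem~\ref{eightisenough}. Gluing (Lemma~\ref{lem:glueing}) along rows then assembles $4m$ out of these building blocks. This is exactly how Theorems~\ref{thisisalabel} and~\ref{section5main} settle the case of odd $m$ --- conditionally in general, unconditionally for $m\le 5$ --- so for odd $m$ the conjecture follows from what is already proved together with Corollary~\ref{cor:blowup}.

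The genuinely new ground is $r\ge 3$, which forces $8\mid 4m$ (and can occur with $8\mid 4n$ as well, so it need not be a transpose of a case covered by Theorem~\ref{thisisalabel}). Here one wants an analogue of Corollary~\ref{bigcases} giving $I_{k}(4m,2^{k-r},2,2)$ directly: begin with an \textup{OA}$(4m,k+r,2,2)$ whose last $r$ columns form an \textup{OA}$(4m,r,2,r)$, partition its rows into blocks of $2^{r}$ each carrying every binary $r$-tuple once, form $C'\boxplus R$ where $C'$ is that orthogonal array and $R$ is the rowspace of a suitable $[I\mid\mathbf{0}]$ (so it is a factorial design by Theorem~\ref{bigdeal1} and each column already has strength $2$), and then rearrange the entries within each column so that every row acquires strength $2$, exactly as in the rearrangement lemma preceding Corollary~\ref{bigcases} but now with $2^{r}$ blocks. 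I expect this last step --- together with actually exhibiting the required starting \textup{OA} on precisely $4m$ rows with its last $r$ columns of strength $r$ --- to be the main obstacle: the constructions of Section~5 all emerge on $2^{\alpha}b$ rows, and to land on exactly $4m$ rows with $\alpha=r\ge 3$ one is led to want a Hadamard matrix of order $4b$ with $4b<4m$, which a Hadamard matrix of order $4m$ does not supply. Either a new construction of the seed orthogonal array on $4m$ rows, or a strengthening of the hypothesis, seems to be needed. Together with the still-open small-$k$ cases for odd $m$ (i.e.\ Conjecture~\ref{whoknows}) and the ``odd factor essential'' cases noted above, this is why the statement remains conjectural.
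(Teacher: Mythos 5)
The statement you are addressing is not a theorem of the paper but its closing \emph{conjecture}; the paper offers no proof, only the remarks that it holds for $m\leq 5$ (via Theorem~\ref{section5main}), that for odd $m$ it would follow from Conjecture~\ref{whoknows}, and that for $m=6$ the minimal unknown cases are $I_{k+3}(24,2^{k},2,2)$ with $5\leq k\leq 11$, while Theorem~\ref{biggercases} covers $12\leq k\leq 20$. Your proposal does not prove the statement either --- and, to your credit, it does not pretend to. What you have written is an accurate map of the reduction: necessity and the exceptional family are correctly accounted for (Lemmas~\ref{2strong} and~\ref{thm:exception1}), the case $c\geq k$ and the case $r=2$ are correctly routed through Lemmas~\ref{haddd}, \ref{letsgettrivial}, \ref{lem:glueing}, Corollary~\ref{cor:blowup} and Theorem~\ref{biggercases}, and you correctly isolate the genuinely open territory as $r\geq 3$ (equivalently $8\mid 4m$ with $k$ below the threshold $2^{\alpha}+\alpha+1$ of Theorem~\ref{biggercases}), which is exactly the family the paper flags at $m=6$. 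Your observation that an analogue of Corollary~\ref{bigcases} on exactly $4m$ rows would require a seed orthogonal array that the hypothesis $H(4m)$ does not obviously supply (one is pushed toward $H(4b)$ with $4b<4m$) is a fair diagnosis of why the statement remains conjectural.

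One small correction: you describe $I_{8}(12,2^{6},2,2)$ as resting on Conjecture~\ref{whoknows}, but for $m=3$ that conjecture is verified explicitly in the paper (the $\mathrm{OA}(12,8,2,2)$ of Example~\ref{eggstwo} satisfies the hypotheses of Theorem~\ref{eightisenough}, giving the unconditional corollary $I_{k}(12,2^{k-2},2,2)$ for $8\leq k\leq 11$). This does not affect your conclusions. Since there is no proof in the paper to compare against, the right assessment is simply that your write-up is a correct account of the partial results and of the obstruction, not a proof; the gap you identify (the $r\geq 3$ seed construction, plus the small-$k$ odd-$m$ cases governed by Conjecture~\ref{whoknows}) is precisely the gap the authors themselves leave open.
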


From Theorem \ref{section5main}, the above conjecture is true for $m\leq 5$.  
 For $m=6$, the unknown cases with minimal parameters are:   
 $I_{k+3}(24,2^k,2,2); 5\leq k\leq 11$. 
 From Theorem \ref{biggercases} and the existence of a Hadamard matrix of order $12$,  
  $I_{k+3}(24,2^k,2,2)$ exists for $12\leq k\leq 20$.
  



\begin{landscape}

\appendix

\section{$I_6(12,16,2,2)$}
\begin{table}[H] 
		\begin{center}
			\renewcommand{\arraystretch}{2}
			\resizebox{\columnwidth}{!}{%
			\begin{tabular}{c}
			$\begin{array}{|cccc cccc cccc cccc|}
			\hline
\bf {000000}^\textit{A}&	\bf 100000^{\textit{B}}&	\bf 010000^\textit{C}&	\bf 001000^{\textit{D}}&	\bf {000100}^\textit{A}&	\bf 001100^{\textit{D}}&	\bf 010100^\textit{C}&	\bf 011000^{\textit{B}}&	\bf 101000^\textit{C}&	\bf 100100^{\textit{B}}&	\bf 110000^{\textit{D}}&	\bf 011100^{\textit{B}}&	\bf 101100^\textit{C}&	\bf 110100^{\textit{D}}&	\bf {111000}^\textit{A}&	\bf {111100}^\textit{A} \\
\bf 110101^{\textit{B}}&	{010101}^A&	100101^{D}&	111101^C&	110001^{B}&	111001^C&	100001^{D}&	{101101}^A&	011101^{D}&	{010001}^A&	000101^C&	{101001}^A&	011001^{D}&	000001^C&	001101^{B}&	001001^{B} \\
\bf 110010^\textit{C}&	010010^{D}&	{100010}^A&	111010^{B}&	110110^C&	111110^{B}&	{100110}^A&	101010^{D}&	{011010}^A&	010110^{D}&	000010^{B}&	101110^{D}&	{011110}^A&	000110^{B}&	001010^C&	001110^C \\
\bf 000011^{\textit{D}}&	100011^C&	010011^{B}&	{001011}^A&	000111^{D}&	{001111}^A&	010111^{B}&	011011^C&	101011^{B}&	100111^C&	{110011}^A&	011111^C&	101111^{B}&	{110111}^A&	111011^{D}&	111111^{D} \\ \hline
\bf 001100^\textit{E}&	101100^E&	011100^H&	000100^F&	001000^G&	000000^H&	011000^F&	010100^G&	100100^F&	101000^G&	111100^H&	010000^E&	100000^H&	111000^F&	110100^G&	110000^E \\
\bf 010101^\textit{F}&	110101^H&	000101^E&	011101^E&	010001^H&	011001^G&	000001^G&	001101^F&	111101^G&	110001^F&	100101^G&	001001^H&	111001^E&	100001^E&	101101^H&	101001^F \\
\bf 011110^\textit{G}&	111110^F&	001110^G&	010110^H&	011010^E&	010010^F&	001010^E&	000110^H&	110110^E&	111010^H&	101110^F&	000010^F&	110010^G&	101010^H&	100110^E&	100010^G \\
\bf 011011^\textit{H}&	111011^G&	001011^F&	010011^G&	011111^F&	010111^E&	001111^H&	000011^E&	110011^H&	111111^E&	101011^E&	000111^G&	110111^F&	101111^G&	100011^F&	100111^H \\ \hline
\bf 111000^\textit{I}&	011000^J&	101000^K&	110000^J&	111100^K&	110100^L&	101100^I&	100000^L&	010000^I&	011100^L&	001000^L&	100100^J&	010100^K&	001100^J&	000000^K&	000100^I \\
\bf 101001^\textit{J}&	001001^I&	111001^I&	100001^L&	101101^L&	100101^J&	111101^K&	110001^K&	000001^K&	001101^K&	011001^J&	110101^I&	000101^I&	011101^L&	010001^L&	010101^J \\
\bf 100110^\textit{K}&	000110^L&	110110^L&	101110^I&	100010^I&	101010^K&	110010^J&	111110^J&	001110^J&	000010^J&	010110^K&	111010^L&	001010^L&	010010^I&	011110^I&	011010^K \\
\bf 101111^\textit{L}&	001111^K&	111111^J&	100111^K&	101011^J&	100011^I&	111011^L&	110111^I&	000111^L&	001011^I&	011111^I&	110011^K&	000011^J&	011011^K&	010111^J&	010011^L \\ \hline
        \end{array}$
        \end{tabular}%
			}
			\caption{An array of type $I_6(12,16,2,2)$.}
			
		\end{center}
	\end{table}	

\end{landscape}

\end{document}